\documentclass[11pt]{amsart}

\usepackage{xspace}

\usepackage{amsmath}
\usepackage{amsthm}
\usepackage{amssymb}

 \usepackage{url}
 \usepackage{graphicx} 
 \usepackage[all]{xy} 

\allowdisplaybreaks 

\newtheorem{theorem}{Theorem}[subsection]
\newtheorem{proposition}[theorem]{Proposition}
\newtheorem{lemma}[theorem]{Lemma}
\newtheorem{corollary}[theorem]{Corollary}

\theoremstyle{definition}
\newtheorem{definition}[theorem]{Definition}
\newtheorem{example}[theorem]{Example}

\theoremstyle{remark}
\newtheorem{remark}{Remark}

\numberwithin{figure}{section}


\newcommand{\cat}{\mathop{\mathrm{cat}}}

\newcommand{\sd}{\mathop{\mathrm{sd}}}

\newcommand{\eqdef}{\mathrel{\mathop:}=}

\newcommand{\geo}[1]{\vert #1 \vert}

\newcommand{\id}{\mathop{\mathrm{id}}}

\newcommand{\scat}{\mathop{\mathrm{scat}}}
\newcommand{\gscat}{\mathop{\mathrm{gscat}}}
\newcommand{\wcat}{\mathrm{cat}^{\mathrm{Wh}}}
\newcommand{\wscat}{\mathrm{scat}^{\mathrm{Wh}}}

\newcommand{\mesh}{\mathop{\mathrm{mesh}}}
\newcommand{\st}{\mathrm{st}}

\newcommand{\lk}{\mathrm{lk}}


\newcommand{\hueco}{\begin{matrix}&&\cr&&\cr &&\cr&&\cr \end{matrix}}



\newcommand{\figref}[1]{Figure~\ref{#1}}


\title[Simplicial LS category]{Simplicial Lusternik-Schnirelmann category}

\author{D.~Fern\'andez-Ternero \and E.~Mac\'{\i}as-Virg\'os \and E.~Minuz \and J.A.~Vilches}

\thanks{The first and the fourth authors were partially supported by MINECO Spain Research Project MTM2015-65397-P and Junta de Andaluc\'{\i}a Research Groups FQM-326 and FQM-189. The second  author was partially supported by MINECO Spain Research Project MTM2013-41768-P and FEDER}

\date{}

\begin{document}

\begin{abstract}
The simplicial LS-category of a finite abstract simplicial
complex is a new invariant of the strong homotopy type, defined in
purely combinatorial terms. It generalizes to arbitrary simplicial
complexes the well known notion of arboricity of a graph, and allows
to develop all the machinery of algebraic topology which is costumary
in the classical theory of Lusternik-Schnirelmann category.

\end{abstract}

\keywords{simplicial complex, strong collapse, contiguous maps,
Lusternik-Schnirelmann category,  simplicial category,
barycentric subdivision, geometric realization, simplicial fibration, Whitehead
formulation of category, homotopy extension property, graph,
arboricity}

\subjclass[2010]{
55U10,     
55M30,      
05C70}    

\maketitle

\setcounter{tocdepth}{1}
\tableofcontents

\section{Introduction}
In a previous paper \cite{FMV2015} we introduced the so-called
simplicial Lus\-ter\-nik-Schnirelmann category of a simplicial
complex. This invariant, denoted by $\scat K$, is defined directly
from the combinatorial structure of the complex $K$, instead of considering
the topological LS-category of the geometric realization
$\geo{K}$, by replacing the
notion of homotopy by that of contiguity. With this approach it turned out that the simplicial
LS-category is an invariant of the strong homotopy type, as
defined by Barmak and Minian in \cite{BARMAKMINIAN2012}. Also,
simplicial LS-category is closely related to the LS-category of
the finite $T_0$-space represented by the Hasse diagram of $K$
\cite{MCCORD}. It is worth noting that although the idea of
contiguous simplicial maps is a classic one, the corresponding
theory of Lusternik-Schnirelmann category  had not been developed
until now. Then, many natural questions remained unsolved in this
new context. The aim of this paper is to answer some of them: for
instance, the relationship between $\scat K$ and $\cat \geo{K}$,
the Whitehead formulation of simplicial category, or an analogue
of Varadarajan's formula for fibrations, among others.  Other
results include a direct proof of the formula $\scat \sd K
\leq\scat K$, for the barycentric subdivision, or  a counterexample
of the analogue of the homotopy extension property for
subcomplexes. A final important result will be that simplicial
category generalizes to arbitrary simplicial complexes the well
known notion of arboricity of a graph.

The contents of the paper are as follows.

We start (Section \ref{2}) by introducing the definition of simplicial
Lusternik-Schnirel\-mann category $\scat K$ of the simplicial
complex $K$. The idea is to define an analogue of the classical
LS-category of a topological space, by replacing the notion of
homotopic maps by that of contiguous simplicial maps. We then
prove some basic properties of this new notion of category, and we
recall our previous result \cite{FMV2015} that $\scat$ is an
invariant of the strong homotopy type. In particular, the category
of the simplicial complex $K$ equals that of its core $K_0$ (the
minimal subcomplex of $K$ obtained by strong collapses). As a
corollary we prove that $\scat K$ is bounded above by the number
of vertices and the number of maximal simplices of $K_0$.

In Section \ref{3} we study the effect of barycentric subdivision
on simplicial category and we give a direct proof of $\scat \sd K
\leq \scat K$. For that we need to prove that if $\varphi$, $\psi$
are contiguous maps, then the induced maps $\sd \varphi$, $\sd
\psi$, are in the same contiguity class. The only proof that we
found in the literature is as a consequence of the results given in \cite{BARMAKMINIAN2012} via
finite posets.

In Section \ref{4} we compare the simplicial category of the
complex $K$ with the classical LS-category of the geometric
realization $\geo{K}$, proving that  $\cat \geo{K}\leq \scat K$.
This implies that $\cat \geo{K}\leq \scat \sd^N\!K $, for any
iterated barycentric subdivision $\sd^N\!K$ of $K$, although there
are examples where this inequality is strict (we show one of these
kind of examples).


Section \ref{5} deals with the study of simplicial products. Being more precise, we consider the
categorical product, denoted by $K\times L$. Then, we prove that
$\scat(K\times L)+1\leq (\scat K+1)(\scat L+1)$.


We are now (Section \ref{6}) in a position to discuss the
simplicial analogue of the Whitehead formulation of the
LS-category. It is known that for topological spaces $X$ with
``good'' properties the LS-category can be computed as the least
integer $n=\wcat X$ such that  the diagonal map   $\Delta \colon X
\to X^{n+1}$ factors (up to homotopy) through the so-called  fat
wedge $T^{n+1}X$.  We  try to adapt this result to abstract
simplicial complexes. First, we define the $n$-th fat wedge $T^nK$
for any pointed complex $K$ and we study its behaviour under
contiguity. We then define a  simplicial Whitehead category
$\wscat K$, which is an invariant of the pointed strong homotopy
type and we are able to prove that $\scat K\leq \wscat K$.
However, unlike the continuous case, the other inequality is not
true, as we show by an example.

In Section \ref{7}  we show how the partial failure of the
Whitehead formulation  is related to the contiguity extension
property.  We discuss a new notion of cofibration in the
simplicial setting.  It is well known   that if $A$ is a
subcomplex of a CW-complex $X$ then the pair $(X,A)$ has the
homotopy extension property. Therefore, if $L$ is a
subcomplex of a simplicial complex $K$,  the pair $(\geo{K},\geo{L})$
has the (topological) homotopy extension property. We define a
purely combinatorial analogue for a simplicial pair $(K,L)$, but
we show counterexamples where the contiguity extension property
fails to be true.

Finally, Section \ref{8} is focused on the study of the simplicial
LS-category in the one-dimensional case, that is, on graphs. The
well known graph-theoretical notion of {\em arboricity} will play
a central role in this study. The arboricity $\Upsilon(G)$ of the
graph $G$ is  the cardinality of a minimal decomposition of $G$
into disjoint spanning forests, i.e., acyclic subgraphs which are
non-necessarily connected and cover all the vertices. The aim of
this section is to prove that arboricity coincides with simplicial
category, being more precise, we prove that $\Upsilon(G)=\scat
G+1$. The two main ideas are that categorical subcomplexes are
precisely forests and that the notion of contiguity is a very
rigid one in the setting of graphs, allowing only a limited number
of moves.

As a final comment, we emphasize that simplicial LS-category is a
new strong homotopy invariant, defined in purely combinatorial
terms, that generalizes to arbitrary simplicial complexes the well
known notion of arboricity of a graph,  and that allows to develop
all the machinery of algebraic topology which is costumary in the
classical theory of Lusternik-Schnirelmann category.


\section{Simplicial category}\label{2}
\subsection{}
We recall the notion of  simplicial LS-category, introduced by the auhors in  \cite{FMV2015}.

Let $K$ and $L$ be abstract simplicial complexes. Remember (see
\cite[\textsection 3.5]{SPANIER1966})
that two simplicial maps
$\varphi,\psi\colon  K\to L$ are {\em contiguous}, denoted by
$\varphi\sim_c \psi$, if for any simplex
$\sigma=\{v_0,\dots,v_p\}$ of $K$, the set of vertices
$$\varphi(\sigma)\cup\psi(\sigma)=\{\varphi(v_0),\dots,\varphi(v_p), \psi(v_0),\dots,\psi(v_p)\}$$ is a simplex of $L$.

More generally, two simplicial maps $\varphi,\psi\colon K \to L$ are in the same {\em contiguity class}, denoted by $\varphi\sim  \psi$, if there is a sequence  of simplicial maps $\varphi_i\colon K\to L$, $i=1,\dots,m$, such that
$$\varphi=\varphi_1\sim_c  \varphi_2\sim_c \cdots\sim_c   \varphi_m=\psi.$$

\begin{proposition}\label{FULL}
Let $\varphi,\psi\colon K \to L$ be  two simplicial maps. Let $L^\prime\subset L$ be a full subcomplexes such that $\varphi(K^\prime), \psi(K^\prime)\subset L^\prime$.  If $\varphi$ and $\psi$ are contiguous then their restrictions $\varphi^\prime,\psi^\prime\colon K^\prime \to L^\prime$ are contiguous too.
\end{proposition}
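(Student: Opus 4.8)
The plan is to reduce the whole statement to a single application of the defining property of a full subcomplex. First I would fix an arbitrary simplex $\tau$ of $K^\prime$. Since $K^\prime$ is a subcomplex of $K$, this $\tau$ is in particular a simplex of $K$, so the hypothesis $\varphi\sim_c\psi$ applies to it and yields that $\varphi(\tau)\cup\psi(\tau)$ is a simplex of $L$.

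Next I would note that the restrictions act on $\tau$ exactly as the original maps do, so $\varphi^\prime(\tau)\cup\psi^\prime(\tau)=\varphi(\tau)\cup\psi(\tau)$; moreover, since $\tau$ is a simplex of $K^\prime$ and $\varphi(K^\prime),\psi(K^\prime)\subset L^\prime$ by hypothesis, every vertex of this set belongs to $L^\prime$. Now fullness finishes the job: a simplex of $L$ all of whose vertices lie in the full subcomplex $L^\prime$ is itself a simplex of $L^\prime$. Hence $\varphi^\prime(\tau)\cup\psi^\prime(\tau)$ is a simplex of $L^\prime$, and as $\tau$ was arbitrary this is precisely $\varphi^\prime\sim_c\psi^\prime$.

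I do not expect a real obstacle. The only points worth a sentence of care are that $\varphi^\prime,\psi^\prime$ are genuine simplicial maps $K^\prime\to L^\prime$ (guaranteed exactly by the hypothesis $\varphi(K^\prime),\psi(K^\prime)\subset L^\prime$), and that the statement is deliberately about one-step contiguity rather than arbitrary contiguity classes: the intermediate maps of a chain realizing $\varphi\sim\psi$ need not send $K^\prime$ into $L^\prime$, so the same reasoning does not, and should not, carry over from $\sim_c$ to $\sim$.
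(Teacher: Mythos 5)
Your argument is correct and is precisely the routine one the paper has in mind (it states Proposition~\ref{FULL} without proof): contiguity of $\varphi,\psi$ gives that $\varphi(\tau)\cup\psi(\tau)$ is a simplex of $L$ for each simplex $\tau$ of $K^\prime$, its vertices lie in $L^\prime$ by the hypothesis on the images, and fullness of $L^\prime$ then forces it to be a simplex of $L^\prime$, which is exactly $\varphi^\prime\sim_c\psi^\prime$. Your closing observation that the statement is about one-step contiguity, and that a chain realizing $\varphi\sim\psi$ need not have intermediate maps sending $K^\prime$ into $L^\prime$, is also a correct and worthwhile point of care.
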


The concept of contiguity class  provides a simplicial analogue of   homotopy classes of continuous maps between  topological spaces.

By means of the notion of contiguity we define the key concept of
categorical subcomplex.

\begin{definition}
Let $K$ be an abstract simplicial complex.   We say that the subcomplex $U\subset K$ is {\em categorical} (in $K$) if there exists a vertex $v\in K$ such that the inclusion $i=i_U\colon U \to K$
 and the constant map $c=c_v\colon U \to K$ are in the same contiguity class (denoted by $i\sim c$).
\end{definition}

In other words, the inclusion of $U$ factors through the vertex $v\in K$ up to ``simplicial homotopy'' (meaning contiguity class). Notice that a categorical subcomplex may not  be connected.

\begin{definition}\label{RELATIVE}Let $K$ be an abstract simplicial complex. The {\em simplicial LS-category} of $K$, denoted by $\scat K$,  is the least integer $n\geq 0$ such that there exist
$n+1$ categorical subcomplexes $U_0,\dots,U_n$ of $K$ which cover $K$, that is, such that $K=  U_0\cup\cdots\cup U_n$.
 \end{definition}

For instance,  as we shall explain in subsection \ref{PROPERTIES}, $\scat K=0$  if and only if $K$ is strongly collapsible to a point, in the sense given by Barmak and Minian  \cite{BARMAK2011, BARMAKMINIAN2012}.

\subsection{}We now show that the simplicial LS-category can be computed by taking into account only maximal simplices. The proof is inspired on an idea from J.~Strom for finite topological spaces.

\begin{lemma}\label{SUBCAT}Let $V\subset U\subset K$ be subcomplexes of $K$. If $U$ is categorical (in $K$) then $V$ is categorical (in $K$).
\end{lemma}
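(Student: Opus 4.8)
The plan is to unwind the definition of categorical subcomplex and observe that the witnessing contiguity "restricts" along the inclusion $V\hookrightarrow U$. Since $U$ is categorical in $K$, there is a vertex $v\in K$ and a sequence of simplicial maps
\[
i_U=\varphi_1\sim_c\varphi_2\sim_c\cdots\sim_c\varphi_m=c_v\colon U\to K,
\]
where $i_U$ is the inclusion and $c_v$ is the constant map at $v$. First I would restrict each $\varphi_j$ to $V$, obtaining simplicial maps $\varphi_j|_V\colon V\to K$. Because $V\subset U$, every simplex $\tau$ of $V$ is a simplex of $U$, so the contiguity condition $\varphi_j(\tau)\cup\varphi_{j+1}(\tau)\in K$ holds \emph{a fortiori} for simplices of $V$; hence $\varphi_j|_V\sim_c\varphi_{j+1}|_V$ for each $j$. (Note this is a direct check and does not require Proposition~\ref{FULL}, since here the target complex $K$ is unchanged; we are only shrinking the source.)

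Next I would identify the two ends of this restricted sequence: $\varphi_1|_V=i_U|_V=i_V$ is exactly the inclusion of $V$ into $K$, and $\varphi_m|_V=c_v|_V$ is the constant map $V\to K$ at the same vertex $v\in K$. Therefore
\[
i_V=\varphi_1|_V\sim_c\varphi_2|_V\sim_c\cdots\sim_c\varphi_m|_V=c_v,
\]
which is precisely the statement that $i_V\sim c_v$, i.e.\ that $V$ is categorical in $K$.

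I do not anticipate a serious obstacle here; the only point requiring (minor) care is the observation that shrinking the domain of a pair of contiguous maps preserves contiguity, which is immediate from the definition because the collection of simplices over which the contiguity condition must be checked only gets smaller. In particular, no fullness hypothesis on any subcomplex is needed, in contrast to Proposition~\ref{FULL} where the \emph{target} is being shrunk.
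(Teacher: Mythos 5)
Your proposal is correct and coincides with the paper's own argument: restricting each map in the contiguity sequence to $V$ is precisely precomposition with the inclusion $j\colon V\subset U$, which is how the paper phrases it, and both arguments then identify the endpoints as the inclusion $i_V$ and a constant map. Your explicit check that precomposition preserves direct contiguity (and the remark that no fullness hypothesis is needed) just spells out a step the paper takes for granted.
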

\begin{proof}
If the inclusion $i\colon U \subset K$ verifies $i\sim c$ for some
constant map $c\colon U\to K$, and $j\colon V\subset U$ is the
inclusion, then $i\circ j\colon V\to K$ is the inclusion, $c\circ
j\colon V\to K$ is a constant map, and $c\circ j\sim i\circ j$.
\end{proof}

\begin{proposition}\label{MAX} In Definition \ref{RELATIVE} one may assume that:
\begin{enumerate}
\item
each  categorical subcomplex in the covering  $U_0,\dots,U_n$ is a union of maximal simplices of $K$;
\item
each maximal simplex of $K$ is contained in only one of the elements of the covering.
\end{enumerate}
 \end{proposition}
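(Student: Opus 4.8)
The plan is to pass from an arbitrary categorical covering $U_0,\dots,U_n$ to one satisfying both (1) and (2), in two stages. First I would enlarge each $U_i$ to a union of maximal simplices, and then I would shrink the overlaps so that each maximal simplex lies in exactly one piece. Throughout, the only two facts I need are \lemref{SUBCAT} (subcomplexes of categorical subcomplexes are categorical) and the elementary observation that if $U$ is categorical via $i_U\sim c_v$, then adjoining to $U$ an entire maximal simplex $\sigma$ that already meets $U$ keeps it categorical — this is where the actual work is.

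For stage one, fix a categorical $U_i$ with witness $i_{U_i}\sim c_v$. For each maximal simplex $\sigma$ of $K$ with $\sigma\cap U_i\neq\varnothing$ pick a vertex $w\in\sigma\cap U_i$; I claim $U_i\cup\sigma$ is still categorical. The idea is to build a contiguity from $i_{U_i\cup\sigma}$ to $c_v$ by first using a single contiguous step that ``folds'' $\sigma$ onto the vertex $w$: define $\varphi\colon U_i\cup\sigma\to K$ to be the identity on $U_i$ and to send every vertex of $\sigma$ to $w$ (this is well-defined and simplicial because $\sigma$ is a simplex containing $w$, and any simplex $\tau$ of $U_i\cup\sigma$ that meets $\sigma$ has $\tau\cap\sigma$ sent to $w$ while $\tau\cap U_i$ is fixed, so $\varphi(\tau)$ is a face of a simplex of $U_i$ — here I use that $\sigma$ maximal forces $\tau$ to be a face of $\sigma$ or to lie in $U_i$). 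One checks $i_{U_i\cup\sigma}\sim_c\varphi$ directly from the definition of contiguity. Then $\varphi$ factors through the inclusion $U_i\hookrightarrow K$ (as $\varphi$ lands in $U_i$), so $\varphi\sim c_v$ by composing the given contiguity $i_{U_i}\sim c_v$ on the left with $\varphi$, exactly as in the proof of \lemref{SUBCAT}. Iterating over all such $\sigma$ — there are finitely many since $K$ is finite — replaces $U_i$ by a categorical subcomplex $U_i'$ which is a union of maximal simplices and still contains $U_i$; doing this for every $i$ yields a covering by subcomplexes of type (1). The main subtlety to get right is the well-definedness and simpliciality of the folding map $\varphi$, which is exactly why the hypothesis that $\sigma$ is \emph{maximal} is essential.

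For stage two, starting from a covering $U_0',\dots,U_n'$ satisfying (1), I would remove redundancy: enumerate the maximal simplices of $K$, and for each maximal simplex $\sigma$ choose one index $i(\sigma)$ with $\sigma\subset U_{i(\sigma)}'$; then set
\[
 U_i''=\bigcup_{\,i(\sigma)=i}\sigma .
\]
Each $U_i''$ is a subcomplex of $U_i'$, hence categorical by \lemref{SUBCAT}; the $U_i''$ still cover $K$ because every maximal simplex of $K$ lies in its assigned $U_{i(\sigma)}''$ and every simplex of $K$ is a face of some maximal simplex; and by construction each maximal simplex lies in exactly one $U_i''$, giving (2). (Some $U_i''$ may now be empty, which only decreases the number of pieces — harmless, since we are proving an inequality bound is attained, not that $n$ is forced up.) This finishes the proof, the only genuinely nontrivial ingredient being the folding argument of stage one.
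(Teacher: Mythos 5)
There is a genuine gap, and it sits exactly where you say the real work is: the stage-one claim that adjoining to a categorical $U_i$ a maximal simplex $\sigma$ merely \emph{meeting} $U_i$ preserves categoricity is false. Take $K$ to be the boundary of the $2$-simplex on vertices $a,b,c$, let $U_0$ be the path with edges $ab,bc$ and $U_1$ the edge $ac$; this is a categorical covering, $\sigma=ac$ is a maximal simplex meeting $U_0$, yet $U_0\cup\sigma=K$ is not categorical in $K$ (it is a minimal complex, so by Lemma~\ref{MINID} the identity is not in the contiguity class of a constant). Your folding argument breaks at two points: the map $\varphi$ is not well defined on $\sigma\cap U_i$ when that intersection contains vertices other than $w$ (they are asked to go both to themselves and to $w$), and, if one fixes this by letting $\varphi$ be the identity on all vertices of $U_i$, the asserted factorization through $U_i$ fails: for a face $\tau$ of $\sigma$ one only gets that $\varphi(\tau)=(\tau\cap U_i)\cup\{w\}$ is a face of $\sigma$, i.e.\ a simplex of $K$ whose vertices lie in $U_i$, not that it is a simplex \emph{of} $U_i$; so you cannot precompose the given contiguity chain $i_{U_i}\sim c_v$ with $\varphi$. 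In the counterexample above $\varphi$ is the identity and visibly does not factor through $U_0$.

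The deeper issue is that stage one addresses a non-problem. In Definition~\ref{RELATIVE}, $K=U_0\cup\cdots\cup U_n$ is a union of subcomplexes, so every simplex of $K$ — in particular every maximal simplex — is already a simplex of (hence wholly contained in) some $U_j$; no enlargement is needed. The paper's proof goes the opposite way: it \emph{shrinks} each $U_j$ to $V_j$, the union of those simplices of $U_j$ that are maximal in $K$; these still cover $K$ (each simplex sits inside a maximal one, which lies in some $U_j$), are categorical by Lemma~\ref{SUBCAT}, and then duplicates are removed exactly as in your stage two. Indeed, your stage two applied directly to the original covering (choosing for each maximal simplex one index $j$ with $\sigma\in U_j$) already proves both (1) and (2); if you delete stage one and justify the choice of $i(\sigma)$ by the covering property rather than by the folding lemma, your argument becomes correct and coincides with the paper's.
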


\begin{proof}
Let $U_0,\dots,U_n\subset K$ be a categorical covering. We shall replace each subcomplex $U_j$ by another subcomplex $V_j$ (may be empty) defined as
the union of the simplices $\sigma\in U_j$ which are maximal in $K$.

First, $V_0,\dots,V_n$ is a covering of $K$, because if $\mu$ is a simplex of $K$, it must be contained in some maximal simplex $\sigma$, which in turns is contained in some $U_j$. Then $\mu\subset \sigma\in V_j$, so $\mu\in V_j$.

Moreover, $V_j$ is categorical (by Lemma \ref{SUBCAT}), because $V_j\subset U_j$ .

The second part follows from the fact that if we suppress each maximal simplex from all excepting one of the $V_j$, then, the resulting subcomplexes are still categorical, by Lemma \ref{SUBCAT}, and they cover $K$.
\end{proof}


\subsection{}\label{PROPERTIES}
We state several general properties of simplicial LS-category.

In  \cite{BARMAKMINIAN2012}, see also \cite{BARMAK2011}, Barmak
and Minian introduced the notion of {\em strong collapse}, a
particular type of simple collapse which is specially adapted to
the simplicial structure. Actually, it can be modelled as a
simplicial map, in contrast with the standard concept of collapse.

\begin{definition}A vertex $v$ of a simplicial complex $K$ is {\em dominated} by another vertex $v^\prime$ if every maximal simplex that contains $v$ also contains $v^\prime$. Equivalently, the link of $v$
is a simplicial cone with vertex $v^\prime$. \end{definition}

\begin{figure}[h!]
\begin{center}
 \includegraphics[height=30mm]{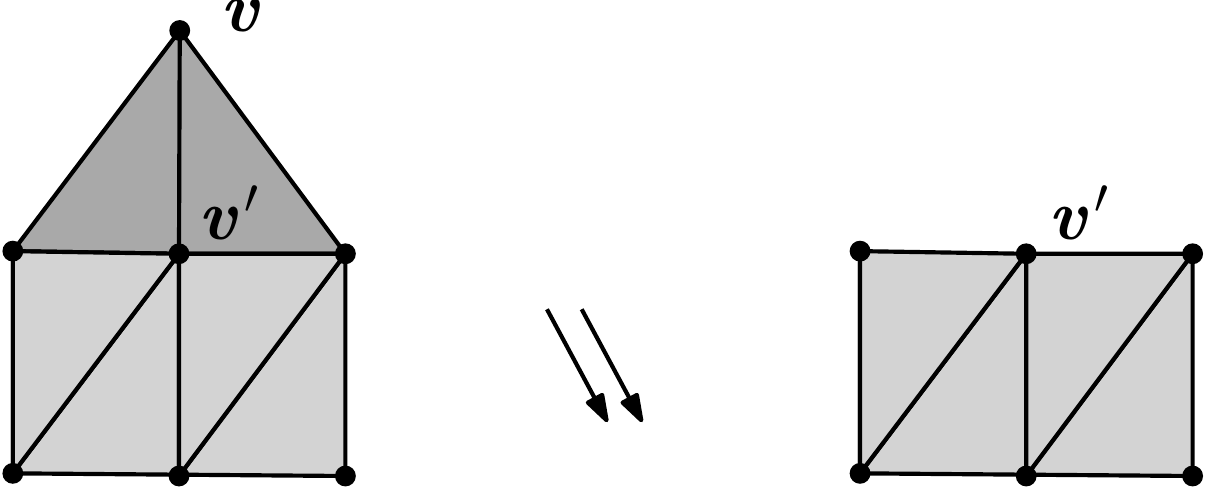}
\end{center}
 \end{figure}

An {\em elementary strong collapse} consists of removing the open
star of a dominated vertex $v$ from a simplicial complex $K$.  The
inverse of a strong collapse is called a strong expansion. Then,
two simplicial complexes $K,L$ have the same {\em strong homotopy
type}, denoted by $K\sim L$, if they are related by a sequence of
strong collapses and expansions. Surprisingly, this turns out to
be intimately related to the classical notion of contiguity. More
precisely, having the same strong homotopy type is equivalent  to
the existence of simplicial maps $\varphi\colon K\to L$ and
$\psi\colon L \to K$ such that
 $\psi\circ\varphi\sim 1_K$ and $\varphi\circ\psi\sim 1_L$ \cite[Corollary 2.12]{BARMAKMINIAN2012}.   The strong homotopy type gives a simplicial analogue to the homotopy type of topological spaces.

Notice that $\scat K=0$  if and only if $K$ is {\em strongly
collapsible}, that is, there is a finite sequence
of elementary strong collapses reducing it to a vertex.

\begin{example}
The simplicial complex $K=\{0,1,2\}$ in Figure \ref{examplecone} is not strong collapsible, in fact $\scat K=1$.
\begin{figure}[h!]
\begin{center}
\includegraphics[height=25mm]{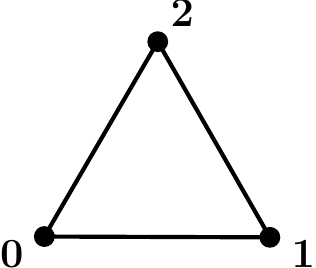}
\end{center}
\caption{A complex $K$ with $\scat K=1$}
\label{examplecone}
 \end{figure}
 \end{example}

\begin{example}
Let $K$ be a finite simplicial complex. The cone over $K$, $K\ast
a$ is strong collapsible. That is, $\scat(K\ast a)=0$.

This result can be
proven  just by taking into
account that $\lk(u,K\ast a)=\lk(u,K)\ast a$ \cite{AYALA1987}.
\end{example}

\begin{example}
Let $K$ be a finite simplicial complex such that $\scat K >0$, and
$S^{0}$ the complex given by only two $0$- simplices and no other
simplices. We define the suspension of $K$ as $\Sigma K=K\ast
S^{0}$. Then $\scat(\Sigma K) =1$.
\end{example}

Next theorem  was proved in \cite[Theorem 3.4]{FMV2015}.
\begin{theorem} The simplicial LS-category is an invariant of the strong homotopy type, that is,  $K\sim L$ implies $\scat K=\scat L$.
\end{theorem}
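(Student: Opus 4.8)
The plan is to reduce everything to the contiguity characterization of the strong homotopy type: by \cite[Corollary 2.12]{BARMAKMINIAN2012}, $K\sim L$ if and only if there exist simplicial maps $\varphi\colon K\to L$ and $\psi\colon L\to K$ with $\psi\circ\varphi\sim 1_K$ and $\varphi\circ\psi\sim 1_L$. Since this hypothesis is symmetric in $K$ and $L$, it suffices to prove $\scat L\le\scat K$, and then the reverse inequality follows by exchanging the roles of $K$ and $L$ (and of $\varphi$ and $\psi$).

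Set $n=\scat K$, fix a categorical covering $U_0,\dots,U_n$ of $K$, and choose vertices $v_j\in K$ with $i_{U_j}\sim c_{v_j}$, where $i_{U_j}\colon U_j\to K$ is the inclusion. For each $j$ I would define the subcomplex
$$W_j\eqdef\{\tau\in L : \psi(\tau)\in U_j\}\subset L.$$
Two routine checks are needed first: that $W_j$ is indeed a subcomplex (if $\psi(\tau)\in U_j$ and $\tau'\subset\tau$, then $\psi(\tau')\subset\psi(\tau)\in U_j$, so $\psi(\tau')\in U_j$), and that $W_0,\dots,W_n$ covers $L$ (given $\tau\in L$, the simplex $\psi(\tau)$ of $K$ lies in some $U_j$, hence $\tau\in W_j$).

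The core of the argument is to show each $W_j$ is categorical in $L$. By construction $\psi$ restricts to a simplicial map $\psi'\colon W_j\to U_j$. Precomposing the contiguity class $i_{U_j}\sim c_{v_j}$ with $\psi'$ gives $i_{U_j}\circ\psi'\sim c_{v_j}\circ\psi'$ as maps $W_j\to K$; that is, $\psi|_{W_j}\sim c_{v_j}$, the constant map $W_j\to K$ at $v_j$. Postcomposing with $\varphi$ then yields $\varphi\circ\psi|_{W_j}\sim \varphi\circ c_{v_j}=c_{\varphi(v_j)}$ as maps $W_j\to L$. On the other hand, restricting the contiguity class $\varphi\circ\psi\sim 1_L$ to the subcomplex $W_j$ of the domain gives $(\varphi\circ\psi)|_{W_j}\sim 1_L|_{W_j}=i_{W_j}$, the inclusion $W_j\to L$. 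Since $(\varphi\circ\psi)|_{W_j}=\varphi\circ\psi|_{W_j}$, chaining the two displayed relations gives $i_{W_j}\sim c_{\varphi(v_j)}$, so $W_j$ is categorical in $L$. Hence $W_0,\dots,W_n$ is a categorical covering of $L$, giving $\scat L\le n=\scat K$, and by symmetry $\scat K=\scat L$.

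The only nonformal steps — and the main (if minor) obstacle — are the elementary stability properties of contiguity used above: precomposition and postcomposition of a pair of contiguous maps with a fixed simplicial map again produces contiguous maps, and restriction of a pair of contiguous maps to any subcomplex of the domain again produces contiguous maps. Each of these is verified directly from the definition by tracking the image of an arbitrary simplex, so, in particular, \propref{FULL} (which concerns restricting the \emph{codomain} to a full subcomplex) is not needed here. The one genuinely substantive choice is to assemble the covering of $L$ from the $\psi$-preimages of the $U_j$ rather than from the $\varphi$-images $\varphi(U_j)$, since the latter need not cover $L$ when $\varphi$ fails to be surjective.
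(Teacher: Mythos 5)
Your argument is correct, and it is essentially the same proof as the one the paper relies on (cited from [FMV2015, Theorem 3.4]): the standard domination argument, pulling back a categorical cover of $K$ along $\psi$ and using $\varphi\circ\psi\sim 1_L$ together with the stability of contiguity classes under pre- and post-composition. Your verifications of these stability properties and of the fact that the preimages $W_j$ are subcomplexes covering $L$ are exactly the routine checks needed, so nothing is missing.
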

\label{SCORE} Therefore a simplicial complex $K$ and its  {\em
core} $K_0$ have the same simplicial LS-category. Remember that
the core of a complex is the minimal subcomplex obtained by
eliminating dominated vertices  (see for instance Barmak's book
\cite{BARMAK2011}).  More precisely, if a vertex $v$ is dominated
by another vertex $v^\prime$, then the collapse $r\colon K \to
K\setminus v$ is a strong equivalence. Under a finite number of
steps one attains a complex $K_0$ which is minimal, that is, it
does not have dominated vertices. This minimal complex  is unique
up to simplicial isomorphism.

Next result establishes two combinatorial upper bounds for the
simplicial category which do not exist for the usual LS-category.
\begin{corollary}\label{BOUNDS}
$\scat K$  is strictly bounded from above by the number of vertices and the number of maximal simplices  of its core $K_0$.
\end{corollary}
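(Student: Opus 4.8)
The plan is to invoke the strong‑homotopy invariance of $\scat$ (the theorem just stated) to replace $K$ by its core $K_0$, and then to exhibit a categorical covering of $K_0$ whose size is controlled by the two quantities in question. Since $\scat K = \scat K_0$, it suffices to bound $\scat K_0$ strictly above by the number of vertices of $K_0$ and, separately, by the number of maximal simplices of $K_0$.

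For the bound by maximal simplices: let $\sigma_1,\dots,\sigma_m$ be the maximal simplices of $K_0$, and take $U_i = \bar\sigma_i$, the (closed) simplex on the vertex set of $\sigma_i$. Each $U_i$ is a cone — indeed a full simplex — hence strongly collapsible, so $\scat U_i = 0$; in particular, picking any vertex $v\in\sigma_i$, the inclusion $U_i\hookrightarrow K_0$ is contiguous (in one step) to the constant map $c_v$, because for every simplex $\tau\subset\sigma_i$ the set $\tau\cup\{v\}$ is again a face of $\sigma_i$, hence a simplex of $K_0$. Thus each $U_i$ is categorical in $K_0$, and $U_1\cup\cdots\cup U_m = K_0$ since every simplex lies in some maximal one. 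This is a categorical covering with $m$ members, so $\scat K_0 \le m-1 < m$.

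For the bound by vertices: here is where the hypothesis that $K_0$ is a core (no dominated vertices) is essential — for a general $K$ there is no such bound, as the introduction notes. Let $w_1,\dots,w_r$ be the vertices of $K_0$ and set $V_j = \st(w_j)$, the closed star of $w_j$ (the union of all maximal simplices through $w_j$, together with their faces). Then $V_j$ is categorical in $K_0$: the closed star of $w_j$ is a cone with apex $w_j$, so the inclusion $V_j \hookrightarrow K_0$ is contiguous to $c_{w_j}$ by exactly the same one‑step argument as above (for any simplex $\tau\subset V_j$, $\tau\cup\{w_j\}$ is a face of a maximal simplex of $K_0$). The stars $V_1,\dots,V_r$ obviously cover $K_0$. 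To get a \emph{strict} bound, I would argue that one of these stars is redundant: I claim that some closed star $\st(w_j)$ is contained in the union of the others, which would yield a categorical covering of size $\le r-1$ and hence $\scat K_0 \le r-2 < r$. Actually the cleaner route is: since a full simplex covers itself, if $K_0$ has only one vertex the bound is trivial ($\scat = 0 < 1$); otherwise pick any maximal simplex $\sigma$ of $K_0$, let $V_0 = \bar\sigma$, and for the remaining vertices $w$ \emph{not} in $\sigma$ take $V_w = \st(w)$. These cover $K_0$: a simplex not meeting $\sigma$ lies in some maximal simplex, which contains a vertex outside $\sigma$ (else it would be a face of a common maximal simplex with $\sigma$ and domination could be invoked — but we may simply note it has \emph{some} vertex, and if all its vertices were in $\sigma$ it would be a face of $\sigma$, hence in $V_0$). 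The number of members is at most $1 + (r - |\sigma|) \le 1 + (r-1) = r$ with equality only if $|\sigma| = 1$, i.e. $\sigma$ is an isolated vertex; handling that degenerate case separately (an isolated vertex is its own categorical subcomplex, already counted), we get a covering of size $\le r - 1$, so $\scat K_0 \le r - 2$.

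The main obstacle is the \emph{strictness} of the vertex bound: covering by all closed vertex stars trivially gives $\scat K_0 \le r-1$, but squeezing out the extra $-1$ requires using that $K_0$ is minimal, via the observation that the closed star of any single vertex already absorbs all simplices spanned by subsets of it, so fusing one star into a maximal‑simplex block (or discarding one vertex whose star is covered by the rest) is always possible when $r\ge 2$. I would write this fusion step carefully, treating the edge cases (a single vertex; a complex that is itself a simplex) by hand, since these are precisely the cases where the naive count is tight. The maximal‑simplex bound, by contrast, is immediate from Proposition~\ref{MAX} together with $\scat(\text{simplex})=0$.
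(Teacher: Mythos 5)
Your overall strategy is the paper's: replace $K$ by its core via strong homotopy invariance, cover $K_0$ by the closed stars of its vertices for the first bound and by (unions of) maximal simplices for the second. Your maximal-simplex half is correct and essentially identical to the paper's, and your direct one-step contiguity computation showing that each closed star $\st(w)$ and each closed maximal simplex is categorical is a perfectly good substitute for the paper's appeal to domination and strong collapsibility. The trouble is your reading of ``strictly bounded'' in the vertex half. The corollary asserts only $\scat K_0 < r$ (and $\scat K_0 < M(K_0)$), and, with the same normalization you used correctly for maximal simplices, a covering by the $r$ categorical subcomplexes $\st(w_1),\dots,\st(w_r)$ already gives $\scat K = \scat K_0 \le r-1 < r$. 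That is the whole proof in the paper; minimality of $K_0$ is not what makes the bound strict (vertex stars are categorical in \emph{any} complex, so your aside that ``for a general $K$ there is no such bound'' is also off --- passing to the core merely makes the bound smaller).

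The extra step you believe is required --- discarding a redundant star so as to reach $\scat K_0 \le r-2$ --- is both unnecessary and false in general. Take $K_0=S^0$, two isolated vertices: it is minimal (neither vertex is dominated), $r=2$, and since two simplicial self-maps of a discrete complex are contiguous only if they are equal, the identity is not in the contiguity class of any constant map, so $\scat S^0 = 1 = r-1 > r-2$. Correspondingly, your claim that some closed star is contained in the union of the others fails exactly when $K_0$ has an isolated vertex (that star is the vertex itself and lies in no other star), and the proposed special-case handling cannot repair this, because the strengthened inequality is simply wrong there. If you delete this detour and conclude directly from the star covering that $\scat K \le r-1 < r$, your argument coincides with the paper's proof.
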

\begin{proof}
The star of a vertex  $\st(v)$, $v\in K_0$, is a strong
collapsible subcomplex of $K_0$ because all vertices in $\st(v)$
are dominated by $v$. Therefore, the family $\{\st(v)\colon v\in
K_{0}\}$ provides a cover of $K_0$ by categorical subcomplexes,
and so we have that $\scat K_0< m$, where $m$ is the number of
vertices of $K_0$. Since the value of $\scat K$ is a strong
homotopy invariant,  $\scat K =\scat K_{0}<m$.

On the other hand, let $M(K_0)$ be the number of maximal simplices
of $K_0$. Then, from Proposition \ref{MAX} it follows that $\scat
K_0<M(K_0)$ because  any maximal simplex  is a strong collapsible
subcomplex of $K_0$.
\end{proof}


\subsection{}\label{GEOMCAT} In \cite{FMV2015}, the authors also introduced the notion of {\em geometric} simplicial category $\gscat K$ of the complex $K$, which is   the analogue of the
geometric LS-category of a topological space introduced by Fox in
\cite{FOX1941}, see also \cite[\textsection 3.1]{CLOT2003}. The
difference with $\scat K$ is that each subcomplex in a categorical
covering is required to be strongly collapsible  in itself, rather
than in the ambient complex; in other words,  the identity, rather
than the inclusion, is in the contiguity class of a constant map.
Clearly,  $\scat K \leq \gscat K$.  Geometric simplicial category
has a very different behaviour from that of the ordinary category.
In particular it is neither hereditary nor homotopically
invariant. However, the bounds of Corollary \ref{BOUNDS} are still
true for $\gscat K$ because $\gscat K_{0}=\max\{\gscat L \colon
L\sim K\}$, as proven in \cite{FMV2015}.

\begin{definition}
Let $K$ and $L$ be two finite simplicial complexes. We define the
join $K\ast L$ as  the simplicial complex with set of  vertices
$K^{0}\cup L^{0}$ and with simplices the simplices of $K$, the
simplices of $L$ and the simplices given by $\sigma\cup\tau$,
$\sigma$  simplex in K and $ \tau$ simplex in L.
\end{definition}

\begin{proposition}
Let $K$ and $L$ be two finite simplicial complexes, then
$\gscat(K\ast L)\leq \min\{\gscat K, \gscat L\}$.
\end{proposition}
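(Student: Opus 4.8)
The plan is to show that a strongly collapsible subcomplex of $K$ gives rise, via the join, to a strongly collapsible subcomplex of $K \ast L$ of the same "size" in the covering, and symmetrically for $L$; taking the minimum of the two bounds then yields the claim.

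First I would recall that, by definition of $\gscat$, there is a covering $K = W_0 \cup \cdots \cup W_m$ with $m = \gscat K$, where each $W_i$ is strongly collapsible in itself (i.e. $\id_{W_i} \sim c$ for a constant map). The key observation is that if $W \subset K$ is a subcomplex, then $W \ast L$ is a subcomplex of $K \ast L$, and the family $\{W_i \ast L\}_{i=0}^{m}$ covers $K \ast L$: indeed any simplex of $K \ast L$ has the form $\sigma \cup \tau$ with $\sigma$ a simplex of $K$ (possibly empty) and $\tau$ a simplex of $L$ (possibly empty), and $\sigma$ lies in some $W_i$, hence $\sigma \cup \tau \in W_i \ast L$. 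So it suffices to check that $W_i \ast L$ is strongly collapsible whenever $W_i$ is.

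The heart of the argument is therefore: \emph{if $W$ is strongly collapsible, then $W \ast L$ is strongly collapsible}. One route is to use the fact (already invoked in the excerpt, via \cite{AYALA1987}) that $\lk(u, W \ast L) = \lk(u, W) \ast L$ for a vertex $u$ of $W$; since $W$ strongly collapsible means it reduces to a point by successively removing dominated vertices, and domination of $u$ by $u'$ in $W$ ($\lk(u,W) = \lk(u,W) \text{ is a cone with apex } u'$) is preserved after joining with $L$ (a cone joined with $L$ is still a cone with the same apex), the same sequence of elementary strong collapses carries $W \ast L$ down to $\pt \ast L$, which is a cone over $L$ and hence strongly collapsible by the cone example in the excerpt. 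Alternatively, and perhaps more cleanly, one can argue at the level of contiguity classes: strong collapsibility of $W$ means $\id_W \sim c_w$ for some vertex $w$; joining every map in the contiguity sequence with $\id_L$ gives a contiguity sequence (the join of contiguous maps is contiguous, since $(\varphi \ast \id_L)(\sigma \cup \tau) \cup (\psi \ast \id_L)(\sigma \cup \tau) = (\varphi(\sigma) \cup \psi(\sigma)) \cup \tau$ is a simplex of $W \ast L$ whenever $\varphi(\sigma)\cup\psi(\sigma)$ is) showing $\id_{W \ast L} = \id_W \ast \id_L \sim c_w \ast \id_L$; and $c_w \ast \id_L$ has image the cone $\{w\} \ast L$, which is strongly collapsible to $w$, so composing gives $\id_{W \ast L} \sim c_w$.

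This establishes $\gscat(K \ast L) \le \gscat K$, and by the symmetry of the join construction (using a covering of $L$ instead) also $\gscat(K \ast L) \le \gscat L$, whence $\gscat(K \ast L) \le \min\{\gscat K, \gscat L\}$. The main obstacle, and the step deserving the most care, is verifying that the join of a strongly collapsible complex with an arbitrary complex remains strongly collapsible; I would present this as a small lemma, favoring the contiguity-class argument above since it only uses that the join of contiguous maps is contiguous plus the already-recorded fact that a cone is strongly collapsible, avoiding any delicate bookkeeping about which vertices remain dominated after subdivision-type operations.
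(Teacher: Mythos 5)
Your proposal is correct and follows essentially the same route as the paper: cover $K$ by $\gscat K+1$ strongly collapsible subcomplexes $U_i$, pass to the subcomplexes $U_i\ast L$, and observe they cover $K\ast L$ and are strongly collapsible (the paper asserts this last point without proof, implicitly via $\lk(u,K\ast a)=\lk(u,K)\ast a$, which is exactly your first argument). Your contiguity-class version of the key lemma is a welcome, slightly more detailed justification, but it does not change the structure of the argument.
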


\begin{proof}
Suppose that $\min\{\gscat K, \gscat L\}=\gscat K=n$, then there
are $n+1$ strong collapsible  subcomplexes $U_{0},..,U_{n}$.
Consider the subcomplexes $U_{0}\ast L,..,U_{n}\ast L$ covering
$K$. They are strong collapsible and they provide a cover of
$K\ast L$.
\end{proof}


\section{Barycentric Subdivision}\label{3}
\subsection{}

We now study the behaviour of $\scat$ under barycentric
subdivisions. Our main result states that $\scat$ is decreasing
under such kind of subdivisions.

\begin{theorem}\label{SUBDIV}Let $\sd K$ be the first barycentric subdivision of $K$. Then $\scat (\sd K) \leq \scat K$.
\end{theorem}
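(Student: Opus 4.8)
The strategy is to transfer a categorical covering of $K$ to one of $\sd K$ by applying the barycentric subdivision functor. Suppose $\scat K = n$, so there is a covering $K = U_0 \cup \cdots \cup U_n$ by categorical subcomplexes, where each inclusion $i_j \colon U_j \hookrightarrow K$ satisfies $i_j \sim c_{v_j}$ for some vertex $v_j$. Applying $\sd$ gives subcomplexes $\sd U_j \subset \sd K$, and since $\sd$ of a covering is a covering (every simplex of $\sd K$ lies in $\sd \sigma$ for some simplex $\sigma$ of $K$, and $\sigma$ lies in some $U_j$), the $\sd U_j$ cover $\sd K$. It remains to show each $\sd U_j$ is categorical in $\sd K$, i.e.\ that $\sd i_j = i_{\sd U_j} \colon \sd U_j \to \sd K$ is in the contiguity class of a constant map.

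The key lemma needed — and flagged in the introduction as the technical heart — is that $\sd$ preserves contiguity classes: if $\varphi \sim_c \psi$ then $\sd\varphi \sim \sd\psi$. Granting this, applying $\sd$ to the chain $i_j = \varphi_1 \sim_c \cdots \sim_c \varphi_m = c_{v_j}$ yields $\sd i_j \sim \sd c_{v_j}$. Now $\sd c_{v_j}$ is the constant map sending every barycenter to the vertex $\{v_j\}$ of $\sd K$, so it is itself constant. Hence $i_{\sd U_j} = \sd i_j \sim \sd c_{v_j} = c_{\{v_j\}}$, showing $\sd U_j$ is categorical. Therefore $\scat(\sd K) \leq n = \scat K$.

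I expect the main obstacle to be precisely the lemma that $\sd$ takes contiguous maps to maps in the same contiguity class. The subtlety is that $\sd\varphi$ and $\sd\psi$ need \emph{not} be contiguous themselves — a single contiguity step upstairs generally unfolds into several steps downstairs — so one must exhibit an explicit intermediate chain. The natural approach: given $\varphi \sim_c \psi \colon K \to L$, one defines simplicial maps on $\sd K$ by choosing, for each vertex $\hat\sigma$ of $\sd K$ (the barycenter of a simplex $\sigma = \{v_0,\dots,v_p\}$ of $K$), a vertex of $\sd L$ lying in $\sd(\varphi(\sigma) \cup \psi(\sigma))$ — this union is a simplex of $L$ by contiguity. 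One should interpolate between $\sd\varphi$ (which sends $\hat\sigma$ to $\widehat{\varphi(\sigma)}$) and $\sd\psi$ by a sequence of maps that successively replace $\widehat{\varphi(\sigma)}$-type choices by $\widehat{\psi(\sigma)}$-type choices in an order compatible with the face relation (e.g.\ by decreasing dimension, or via the map sending $\hat\sigma$ to $\widehat{\varphi(\sigma)\cup\psi(\sigma)}$ as a middle stage), checking at each step that the assignment is simplicial and that consecutive maps are contiguous. Verifying the simplicial and contiguity conditions for these interpolating maps is the one genuinely careful computation; once it is in place, the rest of the argument is the formal transfer of coverings described above.
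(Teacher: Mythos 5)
Your plan is correct and follows the paper's own route: the covering-transfer argument is exactly the proof of Theorem \ref{SUBDIV}, and your sketched key lemma (that $\varphi\sim_c\psi$ implies $\sd\varphi\sim\sd\psi$), including the middle-stage map sending the barycenter of $\sigma$ to that of $\varphi(\sigma)\cup\psi(\sigma)$ and the stepwise interpolation ordered by (maximal) dimension, is precisely Proposition \ref{SDCONT} and its proof. The only thing not carried out is the routine verification that each interpolating map is simplicial and contiguous to the previous one, which you correctly identify as the one careful computation.
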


This theorem was proved in \cite[Cor. 6.7]{FMV2015} using results
of Barmak and Minian (precisely, \cite[Prop. 4.11 and Prop.
4.12]{BARMAKMINIAN2012}) about finite spaces. We shall reformulate
them in order to give a direct proof.

If $K$ is an abstract simplicial complex, its first barycentric subdivision  can be defined formally as the complex $\sd K$ whose vertices $\{\sigma\}$ are identified to the simplices $\sigma=\{v_0,\dots,v_p\}$ of $K$, while the simplices of $\sd K$ are the sequences $\{\sigma_1,\dots,\sigma_q\}$  of simplices of $K$ such that $\sigma_1\subset\dots\subset\sigma_q$ (see \cite[\textsection 2.1]{KOZLOV2008}).

\begin{definition}Let $\varphi\colon K \to L$ be a simplicial map. The induced map $\sd \varphi\colon \sd K \to \sd L$ is defined as
$$(\sd \varphi)(\{\sigma_1,\dots,\sigma_q\})=\{\varphi(\sigma_1),\dots,\varphi(\sigma_q)\}.$$
\end{definition}
Clearly $\sd\varphi$ is a simplicial map, $\sd\id=\id$ and $\sd(\varphi\circ\psi)=(\sd\varphi)\circ (\sd\psi)$.

\begin{proposition}\label{SDCONT}If the simplicial maps $\varphi,\psi\colon K \to L$ are  in the same contiguity class, $\varphi\sim\psi$, then $\sd\varphi,\sd\psi\colon\sd K \to \sd L$ are in the same
contiguity class, $\sd \varphi\sim\sd\psi$.
\end{proposition}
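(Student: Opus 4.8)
The plan is to reduce to a single contiguity step and then connect $\sd\varphi$ to $\sd\psi$ through an explicit fence of simplicial maps, organized by the dimension of the simplices of $K$. Since $\sim$ is by definition generated by $\sim_c$, if $\varphi=\varphi_1\sim_c\cdots\sim_c\varphi_m=\psi$ it suffices to prove $\sd\varphi_i\sim\sd\varphi_{i+1}$ for each $i$ and then concatenate; so from now on I assume $\varphi\sim_c\psi$.

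The key object is the ``union'' map. For a simplex $\sigma$ of $K$ put $\chi(\sigma)\eqdef\varphi(\sigma)\cup\psi(\sigma)$; this is a simplex of $L$ precisely because $\varphi\sim_c\psi$, hence a vertex of $\sd L$, and $\sigma\subseteq\sigma'$ implies $\chi(\sigma)\subseteq\chi(\sigma')$, so $\chi$ sends a chain of simplices of $K$ to a chain of simplices of $L$ and thus defines a simplicial map $\chi\colon\sd K\to\sd L$. I will show $\sd\varphi\sim\chi$; by the symmetric argument (exchanging the roles of $\varphi$ and $\psi$) one also gets $\sd\psi\sim\chi$, and then $\sd\varphi\sim\sd\psi$ follows by transitivity.

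To build the fence between $\sd\varphi$ and $\chi$, set $n=\dim K$ and, for $0\le j\le n+1$, define $\mu_j\colon\sd K\to\sd L$ on a vertex $\sigma$ of $\sd K$ (that is, a simplex of $K$) by $\mu_j(\sigma)=\varphi(\sigma)$ when $\dim\sigma<j$ and $\mu_j(\sigma)=\varphi(\sigma)\cup\psi(\sigma)$ when $\dim\sigma\ge j$; note that $\mu_{n+1}=\sd\varphi$ and $\mu_0=\chi$. The verification then consists of two routine steps. First, each $\mu_j$ is simplicial: along a chain $\sigma_1\subsetneq\cdots\subsetneq\sigma_q$ the dimensions strictly increase, so for some threshold $k$ the image is $\varphi(\sigma_1)\subseteq\cdots\subseteq\varphi(\sigma_k)\subseteq\varphi(\sigma_{k+1})\cup\psi(\sigma_{k+1})\subseteq\cdots\subseteq\varphi(\sigma_q)\cup\psi(\sigma_q)$, again a chain because $\varphi(\sigma_k)\subseteq\varphi(\sigma_{k+1})\subseteq\varphi(\sigma_{k+1})\cup\psi(\sigma_{k+1})$. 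Second, $\mu_j\sim_c\mu_{j+1}$: on a chain $\sigma_1\subsetneq\cdots\subsetneq\sigma_q$ the two maps can disagree only at the unique simplex $\sigma_k$ (if any) with $\dim\sigma_k=j$, where the values $\varphi(\sigma_k)$ and $\varphi(\sigma_k)\cup\psi(\sigma_k)$ are comparable and lie between $\varphi(\sigma_{k-1})$ and $\varphi(\sigma_{k+1})\cup\psi(\sigma_{k+1})$ in the inclusion order, so the union of the two images is still a chain, i.e.\ a simplex of $\sd L$. Chaining $\mu_0\sim_c\mu_1\sim_c\cdots\sim_c\mu_{n+1}$ yields $\chi\sim\sd\varphi$, as desired.

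The point that needs care — and the reason a genuine fence is required rather than a single contiguity — is that $\sd\varphi$ and $\sd\psi$ are in general \emph{not} contiguous in one step: the set $\{\varphi(\sigma_i)\}_i\cup\{\psi(\sigma_i)\}_i$ attached to a chain $\sigma_1\subsetneq\cdots\subsetneq\sigma_q$ need not be totally ordered by inclusion, even though each $\varphi(\sigma_i)\cup\psi(\sigma_i)$ is a simplex of $L$. Filtering by dimension and routing through $\chi$ is exactly what repairs this, and it is the main idea of the proof; everything else is bookkeeping. (Conceptually this is the combinatorial shadow of the statement that order-preserving maps $f\le g$ between the face posets induce simplicial maps of barycentric subdivisions in the same contiguity class — the fact extracted from \cite{BARMAKMINIAN2012} through finite spaces — made direct here.)
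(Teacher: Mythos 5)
Your proof is correct and takes essentially the same route as the paper's: both reduce to a single contiguity step, introduce the union map $\sigma\mapsto\varphi(\sigma)\cup\psi(\sigma)$ (the paper's $F$, your $\chi$), and connect $\sd\varphi$ to it by a fence that replaces $\varphi(\sigma)$ by $\varphi(\sigma)\cup\psi(\sigma)$ starting from the top-dimensional simplices, then argue symmetrically for $\psi$. The only difference is bookkeeping: you switch all simplices of a fixed dimension in one contiguity step, whereas the paper switches one maximal-dimensional disagreement at a time.
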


\begin{proof}Without loss of generality, we may assume that the maps $\varphi$ and $\psi$ are contiguous, $\varphi\sim_c\psi$, which means that $\varphi(\sigma)\cup\psi(\sigma)$  is a simplex, for any simplex $\sigma\in K$.
Let $F\colon\sd K \to \sd L$ be the map given by
$$F(\{\sigma_1,\dots,\sigma_q\})= \{\varphi(\sigma_1)\cup\psi(\sigma_1),\dots,\varphi(\sigma_q)\cup\psi(\sigma_q)\}.$$
We shall prove that $\sd\varphi\sim F$ by increasing step by step the size of the set
$$\Omega(\sd\varphi,F)=\{\sigma\in K\colon  (\sd \varphi)(\{\sigma\})=F(\{\sigma\})\}.$$
Note that $(\sd\varphi)(\{\sigma\})=\{\varphi(\sigma)\}$, while $F(\{\sigma\})=\{\varphi(\sigma)\cup\psi(\sigma)\}$.

If  $\sd\varphi=F$   there is nothing to prove. Otherwise, there
exists $\mu\in K$ such that   $\mu\notin\Omega(\sd\varphi,F)$, or
equivalently, $\varphi(\mu)$ is strictly contained in $
\varphi(\mu)\cup\psi(\mu)$. Let us take $\mu$ to be of maximal
dimension with this property; in this way
$\varphi(\sigma)=\varphi(\sigma)\cup\psi(\sigma)$ when $\mu$ is a
proper face of $\sigma$. Now, we can define a new map $F_1\colon
\sd K \to \sd L$ as
$$F_1(\{\sigma\})=
\begin{cases}
(\sd\varphi)(\{\sigma\}) &\mbox{if } \sigma\neq \mu,\\
F(\{\mu\}) &\mbox{if } \sigma=\mu.
\end{cases}$$
It follows:

(1) The map $F_1$ is simplicial. In fact,   if
$\{\sigma_1,\dots,\sigma_q\}$ is a simplex of $\sd K$, then
$$
F_1(\{\sigma_1,\dots,\sigma_q\})=\{\varphi(\sigma_1),\dots,\varphi(\sigma_q)\},$$
if $\sigma_j\neq \mu \mbox{ for all }j=1,\dots,q,$
while
$$
F_1(\{\sigma_1,\dots,\sigma_q\})=
\{\varphi(\sigma_1),\dots,\varphi(\mu)\cup\psi(\mu\},\dots ,\varphi(\sigma_q)\},$$
if $\sigma_j=\mu \mbox{ for some\ } j.$
In both cases the image is a   simplex of $\sd L$.  Note that in the
second case, by the maximality of $\mu$ cited above, it follows
that $\varphi(\mu)\cup\psi(\mu)\subset \varphi(\sigma_i)$ if
$i>j$.

(2) We have $\sd\varphi\sim_c F_1$, because if
$\{\sigma_1,\dots,\sigma_q\}\in \sd K$ then
\begin{align*}
&(\sd \varphi)(\{\sigma_1,\dots,\sigma_q\})\cup F_1(\{\sigma_1,\dots,\sigma_q\})\\
 =&
\{\varphi(\sigma_1),\dots,\varphi(\sigma_q)\}\cup\{F_1(\{\sigma_1\}),\dots,F_1(\{\sigma_q\})\},
\end{align*}
which equals the simplex
$$\{\varphi(\sigma_1),\dots,\varphi(\sigma_{j-1}),\varphi(\sigma_j)\cup\psi(\sigma_j),\dots,\varphi(\sigma_q)\cup\psi(\sigma_q)\},$$ where $j$ is the lowest index such that $\sigma_j=\mu$, if such a $j$ exists.

(3) Finally, $\Omega(\sd\varphi,F)\varsubsetneq\Omega(F_1,F)$,
by the definition of $F_1$.

By repeating this construction we shall obtain a sequence of contiguous maps
$$\sd\varphi\sim_c F_1 \sim_c\cdots\sim_c F,$$
which shows that $\sd\varphi\sim F$. Using the same argument for
$\psi$, we can prove that $\sd\psi\sim F$. Thus $\sd\varphi\sim
\sd\psi$, as claimed.
\end{proof}

\begin{remark}
However, subdivision does not preserve the strong homotopy type, as shown by the following example taken from \cite[Example 5.1.13]{BARMAK2011}. Consider $K$ to be the boundary of a $2$-simplex and $\sd K$ its barycentric subdivision as in Figure \ref{ERICA3}. They are both minimal complexes because they have no dominated vertices,  but they are not isomorphic, therefore they do not have the same strong homotopy type (see Lemma \ref{MINID}).

\begin{figure}[h!]
\begin{center}
 \includegraphics[height=25mm]{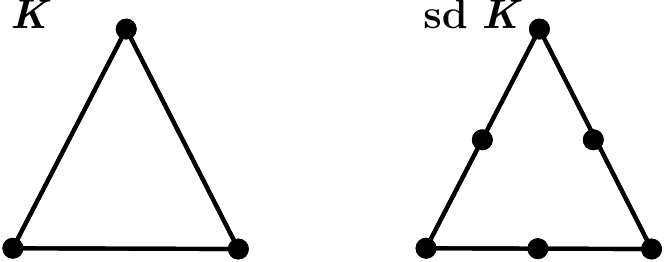}
\end{center}
\caption{A simplicial complex $K$ that has not the same strong homotopy type of its subdivision $\sd K$.}
\label{ERICA3}
 \end{figure}
 \end{remark}

\begin{proof}[Proof of Theorem \ref{SUBDIV}]
Let $\scat K=n$, and take a categorical covering $U_0,\dots,U_n$  of $K$. Consider the subcomplexes $\sd U_0,\dots,\sd U_n$, which cover $\sd K$. Since each inclusion $I_j\colon U_j \subset K$ is in the contiguity class of some constant map $v_j\colon U_j \to K$, denoted by $I_j\sim v_j$, it follows from Proposition \ref{SDCONT} that $\sd I_j\sim \sd v_j$. But it is clear that $\sd I_j$ is the inclusion  $\sd U_j\subset \sd K$, while $\sd v_j$ is the constant map $\{v_j\}$. Then each $\sd U_j$ is a categorical subcomplex and $\scat (\sd K) \leq n$.
\end{proof}

\begin{corollary}\label{SD1}
Let $K$ be a finite simplicial complex and let $\sd K$ be the first barycentric subdivision. Then, $\scat K=1$ implies that $\scat(\sd K)=1$.
\end{corollary}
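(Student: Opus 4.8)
\textbf{Proof plan for Corollary \ref{SD1}.}
The statement combines two inequalities. By \thmref{SUBDIV} we already know $\scat(\sd K)\le\scat K=1$, so it remains to rule out the possibility $\scat(\sd K)=0$; that is, I need to show that $\sd K$ is not strongly collapsible. The plan is to argue by contradiction: suppose $\scat(\sd K)=0$. By the characterization recalled in subsection \ref{PROPERTIES}, $\sd K$ is then strongly collapsible, i.e.\ its core is a point, equivalently the inclusion of $\sd K$ into itself is in the contiguity class of a constant map.

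The key step is to transfer this collapsibility back to $K$. Since $\scat(\sd K)=0$ means the identity $\id_{\sd K}$ is in the same contiguity class as a constant map, I would like to conclude the same for $K$. One clean way: strong collapsibility is a strong-homotopy-type invariant, and $\scat=0$ is equivalent to the core being a point; so if $\sd K$ has the strong homotopy type of a point, I want to deduce $K$ does too. The natural bridge is the geometric realization together with the simplicial structure: I recall (or invoke the identification of $\scat$ with the category of the associated finite space, as mentioned in the introduction, or more directly use that $\scat K = 0 \iff K$ strongly collapsible) and use the fact — provable from \propref{SDCONT} — that strong collapsibility of $\sd K$ forces strong collapsibility of $K$. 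Indeed, if $K$ strongly collapses to a point then so does $\sd K$; for the converse one uses that a strong collapse $K\seco K_0$ to the core induces $\sd K \seco \sd K_0$, and by minimality considerations (the core of $\sd K$ being a point while $\sd K_0$ is minimal) one gets $\sd K_0$ is a point, hence $K_0$ is a point, hence $\scat K=0$, contradicting $\scat K=1$.

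The main obstacle is precisely justifying the implication ``$\sd K$ strongly collapsible $\Rightarrow$ $K$ strongly collapsible.'' The subtlety is that subdivision does \emph{not} preserve strong homotopy type (as the Remark after \propref{SDCONT} emphasizes with $K=\partial\Delta^2$), so one cannot simply say $\sd K \sim K$. What one \emph{can} say is the one-directional statement that a dominated vertex in $K$ survives, after subdivision, as collapsibility of $\sd K$: more carefully, if $K$ is minimal and not a point, then $\sd K$ is also not strongly collapsible. To see this, note that if $\scat K \ge 1$ then $\cat\geo{K} = \cat\geo{\sd K}$ and, were $\sd K$ strongly collapsible, $\geo{\sd K}=\geo{K}$ would be contractible — which does not yet give a contradiction since contractible complexes need not be strongly collapsible. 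So instead I would argue directly on cores: write $r\colon K \to K_0$ for the retraction onto the core and $i\colon K_0\hookrightarrow K$ for the inclusion, with $ir\sim \id_K$ and $ri=\id_{K_0}$; applying $\sd$ and using \propref{SDCONT}, $\sd i \cdot \sd r \sim \id_{\sd K}$ and $\sd r\cdot\sd i = \id_{\sd K_0}$, so $\sd K \sim \sd K_0$ have the same strong homotopy type. If $\scat(\sd K)=0$ then $\sd K_0$ is strongly collapsible, i.e.\ $\scat(\sd K_0)=0$; but $K_0$ is minimal with $\scat K_0 = \scat K = 1 \ge 1$, so $K_0$ has at least two vertices and at least two maximal simplices, and I must show $\sd K_0$ is then not strongly collapsible — this is the genuine combinatorial heart, and here I would reduce to showing no vertex of $\sd K_0$ is dominated, or invoke that a minimal complex that is not a point has a subdivision which is likewise not strongly collapsible, using that a dominated barycenter $\{\sigma\}$ in $\sd K_0$ would force a domination relation among vertices of $K_0$.

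\begin{proof}
By \thmref{SUBDIV}, $\scat(\sd K)\le \scat K = 1$, so either $\scat(\sd K)=1$ or $\scat(\sd K)=0$. Suppose, for contradiction, that $\scat(\sd K)=0$, i.e.\ $\sd K$ is strongly collapsible.

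Let $K_0$ be the core of $K$, with retraction $r\colon K\to K_0$ and inclusion $i\colon K_0\hookrightarrow K$ satisfying $i\circ r\sim \id_K$ and $r\circ i = \id_{K_0}$. Applying $\sd$ and using \propref{SDCONT}, we get $\sd i\circ\sd r\sim\id_{\sd K}$ and $\sd r\circ\sd i=\id_{\sd K_0}$; hence $\sd K$ and $\sd K_0$ have the same strong homotopy type. Since $\scat(\sd K)=0$ and $\scat$ is a strong homotopy invariant, $\scat(\sd K_0)=0$, so $\sd K_0$ is strongly collapsible.

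On the other hand, $K_0$ is minimal with $\scat K_0=\scat K=1>0$, so by \corref{BOUNDS}, $K_0$ has at least two vertices and at least two maximal simplices; in particular $K_0$ is not a point and, being minimal, has no dominated vertices. We claim $\sd K_0$ has no dominated vertices either. Suppose a vertex $\{\sigma\}$ of $\sd K_0$ (corresponding to a simplex $\sigma$ of $K_0$) were dominated by some vertex $\{\tau\}$: then every maximal simplex of $\sd K_0$ containing $\{\sigma\}$ also contains $\{\tau\}$. The maximal simplices of $\sd K_0$ are the maximal chains $\{\sigma_1\subsetneq\cdots\subsetneq\sigma_q\}$ in the face poset of $K_0$. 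If $\sigma$ is not a vertex of $K_0$, pick any vertex $v\in\sigma$ and a maximal chain through $\sigma$ whose bottom element is $\{v\}$; all such chains contain $\{\sigma\}$, so all contain $\{\tau\}$, forcing $\tau\subsetneq\sigma$ or $\tau\supsetneq\sigma$ and $\tau$ comparable to every such chain — running over all maximal simplices $\mu\supseteq\sigma$ of $K_0$ this forces $\tau$ to be comparable to every maximal chain refining $\sigma\subsetneq\mu$, which pins $\tau$ down to a single position and ultimately yields a vertex of $K_0$ dominating a vertex of $K_0$, contradicting minimality. The remaining case, $\sigma=\{v\}$ a vertex, is handled symmetrically using maximal chains with $\{v\}$ at the bottom: domination of $\{v\}$ in $\sd K_0$ would mean every maximal simplex $\mu$ of $K_0$ containing $v$ contains a fixed vertex $w=\{\tau\}$ with $\{v\}\subsetneq\{\tau\}$ impossible as dimensions differ, or $\{\tau\}=\{v'\}$ with $v'$ contained in every maximal simplex through $v$, i.e.\ $v$ dominated in $K_0$ — again a contradiction. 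Thus $\sd K_0$ is minimal, and since it is not a point it is not strongly collapsible, i.e.\ $\scat(\sd K_0)\ge 1$. This contradicts $\scat(\sd K_0)=0$.

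Therefore $\scat(\sd K)\neq 0$, and hence $\scat(\sd K)=1$.
\end{proof}
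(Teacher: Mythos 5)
Your overall skeleton is the same as the paper's: \thmref{SUBDIV} gives $\scat(\sd K)\le 1$, and it remains to exclude $\scat(\sd K)=0$, i.e.\ to show that strong collapsibility of $\sd K$ would force strong collapsibility of $K$. Your reduction to the core is correct ($\sd K\sim \sd K_0$ follows from \propref{SDCONT} applied to $i\circ r\sim 1_K$ and $r\circ i=1_{K_0}$). The gap is exactly at the step you yourself call the combinatorial heart: your claim that $\sd K_0$ has no dominated vertices whenever $K_0$ is minimal and not a point is false. Take $K_0$ to be the complex of \figref{MOTHER}: it has no dominated vertices, and since it is collapsible but minimal its first elementary collapse cannot use a free vertex (a vertex whose only proper coface is an edge would be dominated), so it has an edge $e$ whose unique proper coface is a triangle $t$. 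Then every maximal simplex of $\sd K_0$ containing the vertex $\{e\}$ is a chain of the form $\{v\}\subset e\subset t$ with $v\in e$, hence contains $\{t\}$; so $\{e\}$ is dominated by $\{t\}$ in $\sd K_0$ although $K_0$ is minimal. Your case analysis is sound only when the dominated barycenter $\{\sigma\}$ has $\sigma$ a vertex of $K_0$ (there it does produce a domination in $K_0$); for $\dim\sigma\ge 1$ the asserted ``pinning down'' of $\tau$ yields no domination among vertices of $K_0$, as the example shows. Moreover, even granting that $\sd K_0$ fails to be minimal, that tells you nothing by itself: what you need is that the \emph{core} of $\sd K_0$ is not a point, and your argument offers no route to that once the minimality claim falls.

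The statement you are trying to prove at that point --- $\sd K$ strongly collapsible implies $K$ strongly collapsible --- is true but genuinely nontrivial, and it is precisely \cite[Theorem 4.15]{BARMAKMINIAN2012}, which is what the paper's proof cites (together with \thmref{SUBDIV}); its proof does not proceed by showing that subdivision preserves minimality, which, as above, it does not. So the fix is either to invoke that theorem, as the paper does, or to supply a correct proof of it, which is substantially more work than the argument you sketched.
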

\begin{proof}
In \cite[Theorem 4.15]{BARMAKMINIAN2012} it is proved that a complex  $K$ is strongly collapsible  if and only if  $\sd K$ is strongly collapsible. In other words, $\scat K=0$ if and only if $\scat(\sd K=0)$. Jointly with our Theorem \ref{SUBDIV} this ends the proof.
\end{proof}

\begin{example}\label{SUBDIVGRAPH} The following example, suggested to the third author by
J. Barmak, shows a complex where the inequality of Proposition
\ref{SUBDIV} is strict.

Let $K$ be the complete graph $K_{5}$ (see Figure \ref{K5})
considered as a $1$-dimensional simplicial complex. Let us consider the following categorical cover of $K$:
 \begin{align*}
U_0&= v_0v_1\cup v_0v_2 \cup v_0v_3\cup v_0v_4,\\
U_1&=v_1v_4\cup v_1v_2\cup v_2v_3,\\
U_2&=v_1v_3\cup v_3v_4\cup v_2v_4.
 \end{align*}

\begin{figure}[h!]
\begin{center}
 \includegraphics[height=50mm]{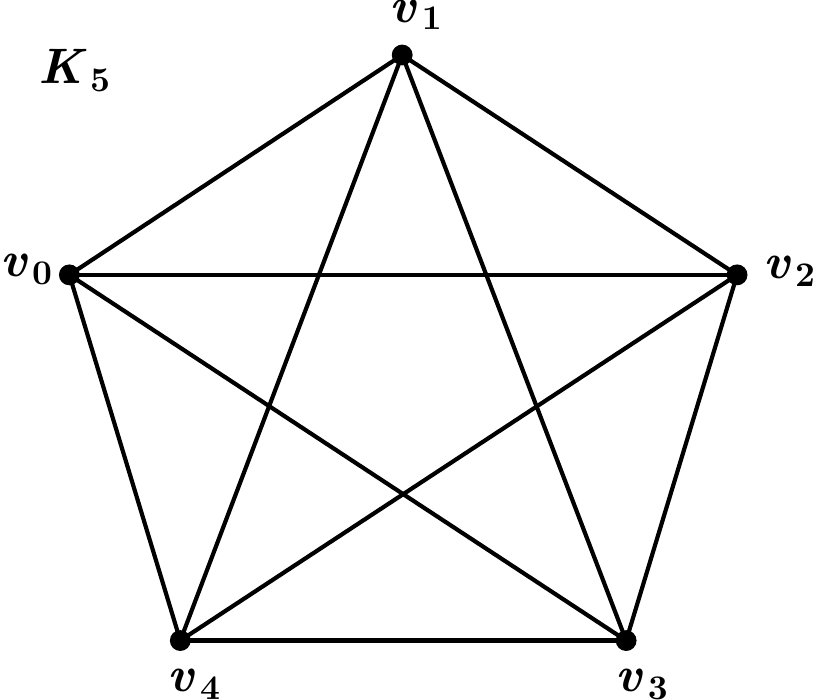}
\end{center}
\caption{The graph $K=K_5$ verifies $\scat(\sd K)<\scat
K$.} \label{K5}
 \end{figure}

Therefore $\scat K_5\leq 2$. Moreover, there is no cover of two
categorical subcomplexes. In fact, if it happens, then one of the
two subcomplexes has to contain at least $5$ edges. Moreover, since any tree contains one more vertex than the number of edges, a forest contains more vertices than edges. Hence, any forest with $5$ edges should have at least $6$ vertices, but it is impossible in our complex.

We conclude that there is not any categorical subcomplex with at
least five vertices, therefore there is no covering of $K_{5}$
given by two categorical subcomplexes. Hence we have $\scat K =2$.

On the other hand, the first barycentric subdivision $\sd K$ has a
covering with two categorical subcomplexes given, for example, by
the two subcomplexes that are showed in Figure
\ref{SDK5}: $L_{0}$, drawn with continuous edges , and
$L_{1}$, drawn with dashed edges. Since $\sd K_{5}$ is not strongly collapsible we can
conclude that $1=\scat(\sd K_{5})<\scat K_{5}=2$.

\begin{figure}[h!]
\begin{center}
\includegraphics[height=50mm]{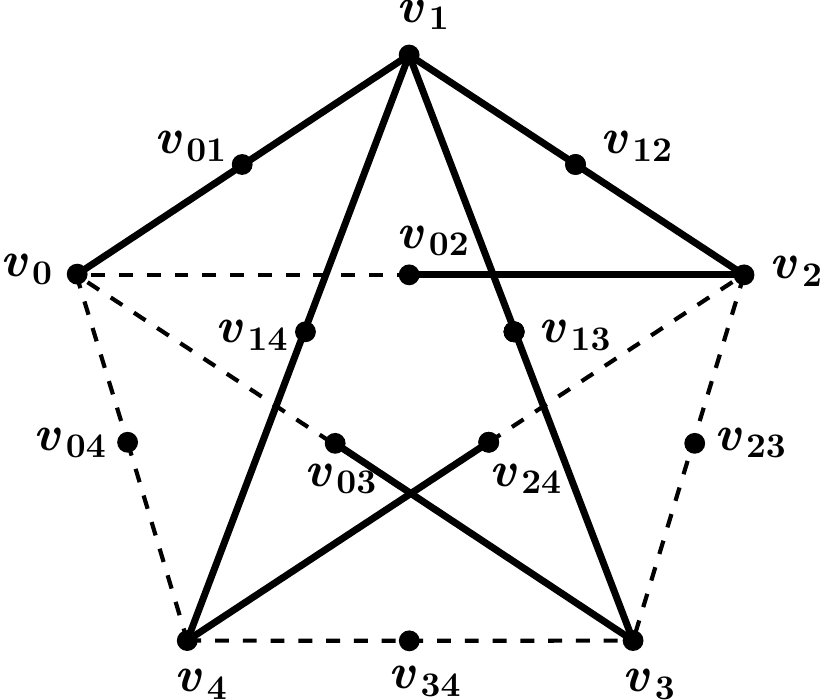}
\end{center}
\caption{A covering of $\sd K_{5}$ with two categorical
subcomplexes.}
 \label{SDK5}
 \end{figure}
\end{example}

\begin{remark}

Note that $\scat K_5$ equals the {\em arboricity} of $K_5$ minus
one; later we shall prove that this is a general result for any
graph (Theorem \ref{ARBCAT}). Also, we shall show later that for graphs
the simplicial category equals the topological category of the
geometric realization (Corollary \ref{ABORCAT}).

\end{remark}

\begin{remark}

It is interesting to point out that Theorem \ref{SUBDIV} can be
extended to the geometrical simplicial category, that is,
$\gscat(\sd K)\leq \gscat K$ with an analogous argument. In
addition, notice that the complex $\sd K_5$ (Figure \ref{SDK5}) can
be covered with two trees and thus, $\gscat K_5=1$.
\end{remark}


\section{Geometric realization}\label{4}
\subsection{}
A natural question is to compare the simplicial category of the complex $K$ with the LS-category of  $X=\geo{K}$, the so-called geometric realization of $K$ \cite[\textsection 3.1]{SPANIER1966}.

We recall the classical definition of Lusternik-Schnirelmann
category of a topological space. For general properties of this
invariant we refer to \cite{CLOT2003}.

\begin{definition}\label{CATOP}
An open subset $U$ of the topological space $X$ is called {\em
categorical} if $U$ can be contracted to a point inside the
ambient space $X$. The {\em LS-category} of $X$, denoted by $\cat
X$, is  the least integer $n\geq 0$ such that there is a covering
of $X$ by $n+1$ categorical open subsets.
\end{definition}

It is known (see Proposition 1.10 of \cite{CLOT2003}) that, when
the space $X$ is a normal ANR,  the categorical sets in
Definition \ref{CATOP} can be taken to be {\em closed} instead of
open. In particular, this is the case for the geometric
realization $X=\geo{K}$ of a finite abstract simplicial complex
(see \cite[\textsection II.4]{LW1969}, also \cite[p.~84]{DOLD1972}
and the references and comments in  \cite[p.~247]{JAMES1999}).



On the other hand, O. Randal-Wallis pointed out to the second
author the following example, showing that categorical  sets
can be rather pathological.

\begin{example}Let $X=[0,1]$ be the unit interval and let $F$ be the Cantor set. Then $F$ is contractible in $X$
and has not the homotopy type of a finite CW-complex. This is
because $F$ is totally disconnected but non-discrete (see
\cite[\textsection 5.1]{FP1990}).
\end{example}
Anyway, the following theorem shows that, when  $X=\geo{K}$ is the
geometric realization of an abstract simplicial complex $K$, the
LS category of $X$ can be computed by means of a closed
categorical covering whose sets are subcomplexes of $K$ in a
certain subdivision. This result is essentially stated, but
without proof, in Fox's paper \cite[\textsection 3]{FOX1941}.

\begin{theorem}\label{Realization}If $X=\geo{K}$ then $\cat X\leq n$
if and only if there exist  subcomplexes $L_0,\dots,L_n$ of some
subdivision $K^\prime$ of $K$, such that each $\geo{L_j}$ is
contractible in $X$ and $X=\geo{L_0}\cup\cdots\cup\geo{L_n}$.
\end{theorem}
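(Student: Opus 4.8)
The plan is to prove the two implications separately, using the fact that $X=\geo{K}$ is a compact ANR so that by Proposition 1.10 of \cite{CLOT2003} the LS-category may be computed with \emph{closed} categorical sets.

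First I would handle the easy direction ($\Leftarrow$): if such subcomplexes $L_0,\dots,L_n$ of a subdivision $K'$ exist, then the sets $\geo{L_0},\dots,\geo{L_n}$ are closed, cover $X$, and each is contractible in $X$ by hypothesis; hence $\cat X\le n$ directly from the closed-set version of Definition \ref{CATOP}. No subtlety here beyond invoking that $\geo{K'}=\geo{K}=X$ canonically.

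The substantive direction is ($\Rightarrow$). Suppose $\cat X\le n$, so there is a closed categorical cover $X=A_0\cup\dots\cup A_n$. The idea is to approximate each $A_j$ from outside by a subcomplex of a sufficiently fine iterated barycentric subdivision. Concretely, since $X$ is a compact metric space and the $A_j$ are closed, a standard Lebesgue-number argument gives an $\varepsilon>0$ such that every $\varepsilon$-neighbourhood behaves well with respect to the cover and the contracting homotopies; then I would choose $N$ large enough that $\mesh(\sd^N K)<\varepsilon$ and set $K'=\sd^N K$. For each $j$, let $L_j$ be the subcomplex of $K'$ consisting of all simplices $\tau$ of $K'$ whose closed carrier meets $A_j$ (equivalently, the ``closed simplicial neighbourhood'' of $A_j$ in $K'$). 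One checks $L_j$ is genuinely a subcomplex, that $\geo{L_0}\cup\dots\cup\geo{L_n}=X$ since the $A_j$ already cover $X$, and that by the mesh condition $\geo{L_j}$ is contained in a prescribed small neighbourhood of $A_j$. The key point is then that $\geo{L_j}$ is contractible in $X$: since $A_j$ is contractible in $X$ via some homotopy $H_j$, and $\geo{L_j}$ deformation retracts onto (a set arbitrarily close to) $A_j$ — or more carefully, one uses that for $\varepsilon$ small the inclusion $A_j\hookrightarrow \geo{L_j}$ admits, after possibly enlarging $N$, a homotopy inside $X$ pushing $\geo{L_j}$ into $A_j$, because a small simplicial neighbourhood of a closed set in a polyhedron retracts to it up to a homotopy supported in an arbitrarily small neighbourhood; composing with $H_j$ shows $\geo{L_j}$ is null-homotopic in $X$.

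I expect the main obstacle to be exactly that last step: controlling the contractibility of the simplicial neighbourhood $\geo{L_j}$ inside $X$, rather than merely in itself. A clean way to organize it is: (i) for each $j$ pick a \emph{closed} categorical $A_j$ and a contraction $H_j\colon A_j\times[0,1]\to X$; (ii) use compactness to find $\delta>0$ so that the closed $\delta$-neighbourhood $\overline{N_\delta(A_j)}$ still deformation retracts onto $A_j$ within $X$ (this uses the ANR/polyhedral structure — a small enough neighbourhood of a compact subpolyhedron admits such a retraction, or one invokes that $X$ is an ANR so $A_j$ has arbitrarily small neighbourhoods that deformation retract to it); (iii) choose $N$ with $\mesh(\sd^N K)<\delta/2$ so that $\geo{L_j}\subset \overline{N_\delta(A_j)}$; (iv) concatenate the retraction of (ii) with $H_j$ to contract $\geo{L_j}$ in $X$. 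Finitely many $j$ means one takes $N$ to work for all of them simultaneously, and $\mesh(\sd^N K)\to 0$ guarantees such $N$ exists. Finally I would remark that this is precisely the statement Fox asserts without proof in \cite[\textsection 3]{FOX1941}, and that it is what makes $\cat\geo{K}\le\scat\sd^N K$ meaningful.
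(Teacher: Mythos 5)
The backward implication and your use of the closed-set version of LS-category are fine, but in the forward direction your step (ii) contains a genuine gap. You take a \emph{closed} categorical set $A_j$ and assert that a sufficiently small (simplicial or metric) neighbourhood of it deformation retracts onto $A_j$ inside $X$, appealing to the polyhedral/ANR structure of $X$. This is false for general closed subsets: such a retraction would essentially require $A_j$ itself to be an ANR, and closed categorical sets need not be anywhere near that nice. The paper's own example makes the point: the Cantor set $F\subset[0,1]$ is contractible in $[0,1]$, but it is totally disconnected and non-discrete, so no neighbourhood of $F$ (a union of intervals) can retract onto it, since a connected component of the neighbourhood would have to map onto a disconnected portion of $F$. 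The fact that the ambient $X$ is an ANR does not rescue the step; it gives neighbourhood retractions onto subsets only when the subset is itself a (compact) ANR or subpolyhedron, which $A_j$ need not be.

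The repair --- and this is what the paper's sketch does --- is to shrink open sets rather than fatten closed ones. Start directly from an \emph{open} categorical cover $U_0,\dots,U_n$ of $X$ (this is the definition of $\cat X\leq n$), let $\delta$ be a Lebesgue number of this cover, choose $N$ with $\mesh(\sd^N K)<\delta$, set $K'=\sd^N K$, and let $L_j$ be the subcomplex of $K'$ consisting of the closed simplices \emph{contained in} $U_j$. The Lebesgue property places every simplex of $K'$ in some $L_j$, so $X=\geo{L_0}\cup\cdots\cup\geo{L_n}$, and each $\geo{L_j}$ is contractible in $X$ simply by restricting the contracting homotopy of $U_j$ to $\geo{L_j}\subset U_j$ --- no retraction onto anything is needed. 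If you insist on starting from a closed categorical cover, the correct tool is not a neighbourhood deformation retraction onto $A_j$ but Borsuk's homotopy extension theorem in the ANR $X$, which lets you extend the contraction of $A_j$ to an open neighbourhood contractible in $X$; after that you are back in the open-cover situation and your ``simplices meeting $A_j$'' construction goes through, provided the mesh is small enough that $\geo{L_j}$ lies inside that contractible neighbourhood.
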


The proof of the above Theorem \ref{Realization} can be sketched
as follows: If we take $\delta$, the Lebesgue number of the
considered categorical covering and  we consider a generalized
subdivision of $K$, $\sd K$, such that $\mesh(\sd^n K)< \delta$.
Then, by considering the subcomplexes obtained with simplices
contained on each open element of this covering, a categorical
covering by subcomplexes is obtained.

\subsection{} We now state the precise relation between the simplicial category of a complex and the topological LS-category of its geometric realization.

\begin{theorem} Let $K$ be a finite simplicial complex. Then $\cat \geo{K}\leq \scat K$.
\end{theorem}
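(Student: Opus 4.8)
The plan is to produce a closed categorical covering of $X=\geo{K}$ directly from a categorical covering of $K$, and then invoke the characterization of $\cat X$ by closed coverings available for polyhedra. Suppose $\scat K = n$ and let $U_0,\dots,U_n$ be categorical subcomplexes covering $K$, so that for each $j$ the inclusion $i_j\colon U_j\hookrightarrow K$ is in the same contiguity class as a constant map $c_{v_j}\colon U_j\to K$. I would take the closed sets $\geo{U_0},\dots,\geo{U_n}$; since geometric realization sends a covering by subcomplexes to a covering by closed subspaces, these cover $X$. Because $X$ is the geometric realization of a finite simplicial complex it is a compact polyhedron, hence a normal ANR, so by Proposition 1.10 of \cite{CLOT2003} (cited already in the excerpt, in the discussion preceding Theorem \ref{Realization}) it suffices to show each $\geo{U_j}$ is contractible inside $X$.

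The key step is the passage from contiguity to homotopy of geometric realizations. The standard fact I would use is that if two simplicial maps $\varphi,\psi\colon A\to B$ are contiguous, then $\geo{\varphi}\simeq\geo{\psi}\colon \geo{A}\to\geo{B}$, via the straight-line homotopy: for each point $x\in\geo{A}$, both $\geo{\varphi}(x)$ and $\geo{\psi}(x)$ lie in the closed simplex $\geo{\varphi(\sigma)\cup\psi(\sigma)}$ of $\geo{B}$, where $\sigma$ is the carrier of $x$, so the linear path between them stays in $\geo{B}$; one checks this is continuous in $x$. Chaining this along a sequence of contiguous maps shows that $\varphi\sim\psi$ implies $\geo{\varphi}\simeq\geo{\psi}$. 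Applying this to $i_j\sim c_{v_j}$ gives $\geo{i_j}\simeq\geo{c_{v_j}}$ as maps $\geo{U_j}\to X$; but $\geo{i_j}$ is precisely the inclusion $\geo{U_j}\hookrightarrow X$ and $\geo{c_{v_j}}$ is the constant map at the vertex $v_j$. Hence the inclusion of $\geo{U_j}$ into $X$ is null-homotopic, i.e.\ $\geo{U_j}$ is contractible in $X$.

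Combining the two ingredients: $\{\geo{U_j}\}_{j=0}^n$ is a closed covering of the polyhedron $X$ by $n+1$ subsets each contractible in $X$, so by the closed-set characterization of LS-category for ANRs we get $\cat X\le n=\scat K$, as desired.

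The main obstacle is the continuity of the straight-line homotopy realizing a contiguity, and the bookkeeping that $\geo{i_j}$ and $\geo{c_{v_j}}$ are literally the inclusion and a constant map; both are routine but must be spelled out carefully, since the homotopy is defined piecewise over the simplices of $U_j$ and one needs the carriers to match up on overlaps. A secondary point to state cleanly is the justification that $\cat X$ for a finite polyhedron may be computed with closed categorical sets — this is exactly the remark already made in the excerpt before Theorem \ref{Realization}, so it can be cited rather than reproved.
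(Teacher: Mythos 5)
Your argument is correct and is essentially the paper's own proof: pass from the simplicial categorical covering $U_0,\dots,U_n$ to the closed cover $\geo{U_0},\dots,\geo{U_n}$ of $\geo{K}$, use that contiguous maps have homotopic realizations (the paper simply cites Spanier \S 3.4 where you sketch the straight-line homotopy), and invoke the closed-cover characterization of $\cat$ for the normal ANR $\geo{K}$. No gaps; the only difference is that you prove the contiguity-to-homotopy step rather than citing it.
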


\begin{proof} Let $\scat K=n$ and let $\{U_0,\dots,U_n\}$ be a categorical simplicial covering of $K$, that is, each inclusion $I_j\colon U_j \to K$ is in the same contiguity class that a constant map $v_j$. Therefore, the induced maps $\geo{I_j}, \geo{v_j}\colon \geo{U_j}\to \geo{K}$
between the geometric realizations are homotopic, $\geo{I_j}\simeq\geo{v_j}$ \cite[\textsection 3.4]{SPANIER1966}. It is clear that $\geo{I_j}$ is the inclusion $\geo{U_j}\subset \geo{K}$, and $\geo{v_j}$ is a constant map. Thefore the subspaces $\geo{U_0},\dots,\geo{U_n}$ form a categorical closed cover of $\geo{K}$. As we commented before, since $\geo{K}$ is a  normal ANR we can consider closed covers instead of open covers. Then $\cat \geo{K}\leq n$.
\end{proof}

\begin{corollary}\label{sd} $\cat \geo{K}\leq \scat(\sd^NK)$, for any iterated barycentric subdivision $\sd^NK$ of $K$.
\end{corollary}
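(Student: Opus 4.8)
The plan is to chain together two results already established in the excerpt: the theorem $\cat \geo{K} \leq \scat K$ proved just above, and Theorem~\ref{SUBDIV}, which asserts $\scat(\sd K) \leq \scat K$. First I would note that the geometric realization is insensitive to barycentric subdivision: there is a canonical homeomorphism $\geo{\sd K} \cong \geo{K}$ (see \cite[\textsection 3.3]{SPANIER1966}), so in particular $\cat\geo{\sd^N K} = \cat\geo{K}$ for every $N \geq 0$. This is the one external fact the argument needs, and it is completely standard.

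Granting that, the proof is a short induction on $N$. The base case $N = 0$ is exactly the theorem immediately preceding this corollary, $\cat\geo{K} \leq \scat K$. For the inductive step, suppose $\cat\geo{K} \leq \scat(\sd^{N-1} K)$. Applying the preceding theorem to the complex $\sd^{N-1}K$ in place of $K$ gives $\cat\geo{\sd^{N-1}K} \leq \scat(\sd^{N-1}K \cdot \text{one more subdivision})$; more directly, applying Theorem~\ref{SUBDIV} to $\sd^{N-1}K$ yields $\scat(\sd^N K) \leq \scat(\sd^{N-1}K)$. Combining with the inductive hypothesis gives $\cat\geo{K} \leq \scat(\sd^{N-1}K)$, and then it remains only to observe that we may equally run the preceding theorem on $\sd^N K$ itself: $\cat\geo{\sd^N K} \leq \scat(\sd^N K)$, and since $\geo{\sd^N K} \cong \geo{K}$ the left-hand side is just $\cat\geo{K}$. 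Either packaging works; the cleanest is to say directly that for each $N$, the preceding theorem applied to $\sd^N K$ gives $\cat\geo{\sd^N K}\leq\scat(\sd^N K)$, and $\cat\geo{\sd^N K}=\cat\geo{K}$.

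I do not expect any real obstacle here — the corollary is essentially a one-line consequence once the homeomorphism $\geo{\sd^N K}\cong\geo{K}$ is invoked. The only point requiring a modicum of care is making explicit that $\cat$ of the realization does not change under subdivision, since the statement of the corollary is phrased in terms of $\geo{K}$ rather than $\geo{\sd^N K}$; a reader who overlooks this might wonder why Theorem~\ref{SUBDIV} is even needed, when in fact it is not needed for the phrased statement at all, only the invariance of $\cat\geo{-}$ under subdivision together with the preceding theorem. If one instead wishes to emphasize the monotone chain $\scat(\sd^N K)\leq\cdots\leq\scat(\sd K)\leq\scat K$, then Theorem~\ref{SUBDIV} supplies each link, and the corollary reads as a strengthening showing $\cat\geo{K}$ lies below every term of this decreasing sequence. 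I would present it in the first, shorter form.
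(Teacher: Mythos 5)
Your argument is correct and is essentially the paper's own proof: the paper likewise reduces the corollary to the homeomorphism $\geo{\sd K}\cong\geo{K}$ (citing Kozlov rather than Spanier) and then applies the preceding theorem to $\sd^N K$. Your observation that Theorem~\ref{SUBDIV} is not actually needed for the statement as phrased is also accurate.
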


\begin{proof}
The geometric realizations $\geo{\sd K}$ and $\geo{K}$ are homeomorphic (see for instance \cite[Prop. 2.33]{KOZLOV2008}).
\end{proof}

\begin{example} The inequality in Corollary \ref{sd} may be strict. For instance, the
complex $K$ in Figure \ref{MOTHER} is topologically contractible
(it is collapsible), that is $\cat \geo{K} = 0$, but all the
barycentric subdivisions have simplicial category one. This
happens because, as proven by Barmak and Minian  in \cite[Theorem
4.15]{BARMAKMINIAN2012}, a complex is strongly collapsible if and
only if its barycentric subdivision is strongly collapsible. In
other words, $\scat K = 0$ if and only if $\scat (\sd K) = 0$. But
the complex $K$ is not strongly collapsible (it has no dominated
vertices) and can be covered by two categorical subcomplexes, so
$\scat K = 1$. By applying Theorem \ref{SUBDIV} we have $0 < \scat
(\sd^N K) \leq 1$ for all $N$.
\end{example}


\section{Products}\label{5}

\subsection{}\label{PRELIMPROD}  We recall the definition of categorical product (see Remark \ref{CATVSCART} below for an explanation of our notations).

\begin{definition}\label{PROD}
Let $K,L$ be two abstract simplicial complexes. The {\em categorical
product} $K\times L$  is defined as follows. The vertices of
$K\times L$ are the pairs $(v,w)$ of vertices with $v\in K$ and
$w\in K$. The simplices of $K\times L$ are the sets of vertices
$\{(v_1,w_1),\dots,(v_q,w_q)\}$ such that $\{v_1,\dots,v_q\}$ is a
simplex of $K$ and $\{w_1,\dots,w_q\}$ is a simplex of $L$.
\end{definition}

We fix some notation.

For $n\geq 1$, we shall denote
$K^n=K\times\stackrel{n)}{\cdots}\times K$. By definition, the
projections $\pi_j\colon K \to K^n$, $j=1,\dots,n$, onto each
factor are simplicial maps. The diagonal map $\Delta \colon K \to
K^n$ defined by $\Delta(v)=(v,\dots,v)$ is   simplicial  too. A
map $L\to K^n$ is simplicial if and only if the compositions with
all the projections are simplicial maps.

If $\varphi_1,\dots,\varphi_n\colon K \to L$ are simplicial maps,
the map
$$(\varphi_1\dots,\varphi_n)\colon K \to L^n$$ defined by
$$(\varphi_1\dots,\varphi_n)(v)=(\varphi_1(v)\dots,\varphi_n(v))$$ is simplicial.

If $\varphi\colon K\to L$ is a simplicial map then we denote by
$\varphi^n\colon K^n \to L^n$ the map
$$\varphi^n(v_1,\dots,v_n)=(\varphi(v_1),\dots,\varphi(v_n)),$$ which is simplicial.

\begin{remark}\label{CATVSCART}
Our Definition \ref{PROD} is like Definition 4.25 in
\cite{KOZLOV2008} but we  changed the notation $K\prod L$ to
$K\times L$ for the sake of simplicity.   As an example,
$\Delta^1\times\Delta^1\cong \Delta^3$. Note that $\geo{K^n}$ is
not homeomorphic to $\geo{K}^n$. Hence the categorical (or
simplicial) product should not  be confused with the so-called
cartesian product \cite[\textsection II.8]{ES1952}, which depends
on some ordering of the vertices and does not verify the universal
property of a product. However, as Kozlov proves in \cite[Prop.
15.23]{KOZLOV2008}, the geometric realizations of both products
have the same homotopy type, then $\geo{K^n}\simeq\geo{K}^n$ as
topological spaces (see also \cite[Theorem 10.21]{KOZLOV2008}).
\end{remark}

\begin{proposition}\label{STUFF}\
\begin{enumerate}
\item
Let $\varphi,\psi\colon K \to L$ and
$\varphi^\prime,\psi^\prime\colon K^\prime \to L^\prime$ be
simplicial maps such that $\varphi\sim_c\psi$ and
$\varphi^\prime\sim_c\psi^\prime$. Then
$$\varphi\times\varphi^\prime \sim_c\psi\times\psi^\prime\colon K\times K^\prime\to L\times L^\prime.$$
\item
Let $\varphi\sim_c \psi\colon K \to L$ and
$\varphi^\prime\sim_c\psi^\prime \colon K \to L^\prime$, then
$$(\varphi,\varphi^\prime)\sim_c(\psi,\psi^\prime)\colon K \to L\times L^\prime.$$
\end{enumerate}
\end{proposition}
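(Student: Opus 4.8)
The plan is to verify the contiguity condition directly from the definitions, working one simplex at a time. Recall that for two simplicial maps $f,g$ between complexes $A$ and $B$, we have $f\sim_c g$ precisely when $f(\sigma)\cup g(\sigma)$ is a simplex of $B$ for every simplex $\sigma$ of $A$. So in part (1), I would fix an arbitrary simplex $\rho$ of $K\times K^\prime$ and show that $(\varphi\times\varphi^\prime)(\rho)\cup(\psi\times\psi^\prime)(\rho)$ is a simplex of $L\times L^\prime$; in part (2) I would fix a simplex $\sigma$ of $K$ and show that $(\varphi,\varphi^\prime)(\sigma)\cup(\psi,\psi^\prime)(\sigma)$ is a simplex of $L\times L^\prime$.

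For part (1), write $\rho=\{(v_1,w_1),\dots,(v_q,w_q)\}$, so that by Definition \ref{PROD} the projections $\sigma=\{v_1,\dots,v_q\}$ and $\sigma^\prime=\{w_1,\dots,w_q\}$ are simplices of $K$ and $K^\prime$ respectively. The image $(\varphi\times\varphi^\prime)(\rho)\cup(\psi\times\psi^\prime)(\rho)$ is the set of vertices $\{(\varphi(v_i),\varphi^\prime(w_i))\}_i\cup\{(\psi(v_i),\psi^\prime(w_i))\}_i$. By Definition \ref{PROD} again, this set of vertices of $L\times L^\prime$ is a simplex provided its first coordinates form a simplex of $L$ and its second coordinates form a simplex of $L^\prime$. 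The first coordinates are contained in $\varphi(\sigma)\cup\psi(\sigma)$, which is a simplex of $L$ because $\varphi\sim_c\psi$; similarly the second coordinates lie in $\varphi^\prime(\sigma^\prime)\cup\psi^\prime(\sigma^\prime)$, a simplex of $L^\prime$ because $\varphi^\prime\sim_c\psi^\prime$. Hence the image is a simplex of $L\times L^\prime$, as required. For part (2) the argument is the same: fixing $\sigma=\{v_1,\dots,v_q\}$ a simplex of $K$, the set $(\varphi,\varphi^\prime)(\sigma)\cup(\psi,\psi^\prime)(\sigma)$ has first coordinates inside $\varphi(\sigma)\cup\psi(\sigma)$ (a simplex of $L$) and second coordinates inside $\varphi^\prime(\sigma)\cup\psi^\prime(\sigma)$ (a simplex of $L^\prime$), so by the definition of the categorical product it is a simplex of $L\times L^\prime$.

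There is really no serious obstacle here; the only point that requires a moment's care is the observation that the vertex set of the union of two images has first (resp.\ second) coordinate set equal to the union of the first (resp.\ second) coordinate sets of the two images, together with the fact that a subset of a simplex is again a simplex, so that containment of the coordinate projections in simplices of $L$ and $L^\prime$ suffices to conclude. One should also note that the statement as written uses $\varphi\times\varphi^\prime$ to mean the map $(v,w)\mapsto(\varphi(v),\varphi^\prime(w))$, whose simpliciality is immediate from the definition of $K\times L$ and is implicitly used in the ambient notation of Section \ref{5}; I would remark on this but not belabour it.
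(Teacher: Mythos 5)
Your proof is correct: the paper states Proposition \ref{STUFF} without proof, treating it as an immediate consequence of Definition \ref{PROD}, and your direct verification (projecting the union of the two images onto each coordinate and using contiguity of $\varphi\sim_c\psi$ and $\varphi'\sim_c\psi'$) is exactly the intended argument. No gaps.
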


\begin{corollary}\label{CONTPROD} Let $K\sim L$ be two complexes with the same strong homotopy type. Then $K^n\sim L^n$.
\end{corollary}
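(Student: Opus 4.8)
\textbf{Proof plan for Corollary \ref{CONTPROD}.}

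The plan is to unpack the hypothesis $K\sim L$ into concrete simplicial maps and then feed them through the multiplicativity results just established. By the Barmak--Minian characterization of strong homotopy type recalled in Subsection \ref{PROPERTIES}, the relation $K\sim L$ is equivalent to the existence of simplicial maps $\varphi\colon K\to L$ and $\psi\colon L\to K$ with $\psi\circ\varphi\sim 1_K$ and $\varphi\circ\psi\sim 1_L$. So it suffices to produce simplicial maps $K^n\to L^n$ and $L^n\to K^n$ whose composites in both orders are in the contiguity class of the respective identities.

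The natural candidates are $\varphi^n\colon K^n\to L^n$ and $\psi^n\colon L^n\to K^n$, which are simplicial by the discussion in Subsection \ref{PRELIMPROD}. Since $\sd$-type functoriality holds here too, one has $\psi^n\circ\varphi^n=(\psi\circ\varphi)^n$ and $\varphi^n\circ\psi^n=(\varphi\circ\psi)^n$. Thus the whole statement reduces to the following claim: if $\alpha\sim\beta\colon M\to M$ are simplicial maps in the same contiguity class, then $\alpha^n\sim\beta^n\colon M^n\to M^n$. Chaining along a contiguity sequence, it is enough to treat the case $\alpha\sim_c\beta$. Here I would invoke Proposition \ref{STUFF}(1) repeatedly: from $\alpha\sim_c\beta$ on each of the $n$ factors one gets $\alpha\times\cdots\times\alpha\sim_c\beta\times\cdots\times\beta$, which is exactly $\alpha^n\sim_c\beta^n$. (Strictly, Proposition \ref{STUFF}(1) is stated for two factors, so one applies it $n-1$ times, each time using that $\gamma\sim_c\gamma$ for any simplicial map $\gamma$ to fill the remaining slots, and that $\sim_c$ is stable under such products.) Applying this with $\alpha=\psi\circ\varphi$, $\beta=1_K$ gives $(\psi\circ\varphi)^n\sim 1_{K^n}$, and symmetrically $(\varphi\circ\psi)^n\sim 1_{L^n}$, so $K^n\sim L^n$ by the same Barmak--Minian criterion.

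I do not anticipate a serious obstacle; the only mild point of care is the bookkeeping in extending the two-factor statement of Proposition \ref{STUFF}(1) to an $n$-fold product, i.e.\ making precise that iterated categorical products are associative enough that $M\times\cdots\times M$ is unambiguously $M^n$ and that $\sim_c$ propagates through each factor independently. This is routine from Definition \ref{PROD}, since a set of vertices of $M^n$ is a simplex exactly when each of its $n$ coordinate projections is a simplex of $M$, so contiguity can be checked coordinatewise.
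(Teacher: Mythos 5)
Your proposal is correct and matches the argument the paper intends: the corollary is stated as an immediate consequence of Proposition \ref{STUFF}, and the paper's own later use of it (in the proof of invariance of $\wscat$, deducing $\psi^{n+1}\circ\varphi^{n+1}\sim 1_{K^{n+1}}$ from $\psi\circ\varphi\sim 1_K$) follows exactly your route via the Barmak--Minian characterization, the identity $(\psi\circ\varphi)^n=\psi^n\circ\varphi^n$, and iterated application of Proposition \ref{STUFF}(1) along a contiguity sequence.
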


%
%
%

The following Theorem establishes the simplicial analogue of a
well known result on the LS-category of a product of topological
spaces.

\begin{theorem}
Let $K$ and $L$ be finite simplicial complexes. Then
$$\scat(K\times L)+1\leq (\scat K+1)(\scat L+1).$$
\end{theorem}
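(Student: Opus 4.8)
The plan is to mimic the classical topological proof of the product inequality for LS-category, adapting it to the combinatorial setting using the categorical-product structure recalled in Section~\ref{5}. Suppose $\scat K = m$ and $\scat L = n$, with categorical coverings $U_0,\dots,U_m$ of $K$ and $V_0,\dots,V_n$ of $L$. For each $i$ let $c_{a_i}\colon U_i \to K$ be a constant map with $i_{U_i}\sim c_{a_i}$, and similarly $i_{V_j}\sim c_{b_j}\colon V_j \to L$. The idea is to build, out of these, a categorical covering of $K\times L$ with $(m+1)(n+1)$ members, which gives $\scat(K\times L) + 1 \le (m+1)(n+1)$.

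The covering of $K\times L$ is the classical ``diagonal band'' construction. For each $k$ with $0\le k\le m+n$, set
\[
W_k = \bigcup_{i+j=k} (U_i \times V_j),
\]
a subcomplex of $K\times L$; these $m+n+1$ subcomplexes cover $K\times L$ because any simplex of $K\times L$ projects to a simplex of $K$ lying in some $U_i$ and to a simplex of $L$ lying in some $V_j$, hence lies in $U_i\times V_j\subset W_{i+j}$. However, $W_k$ itself need not be categorical, so instead one refines: each $W_k$ is a union of the ``blocks'' $U_i\times V_j$ with $i+j=k$, and for fixed $k$ these blocks are pairwise ``independent'' in the sense that they can be contracted simultaneously. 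More precisely, using Proposition~\ref{STUFF}(1), the product map $i_{U_i}\times i_{V_j}\sim_c\cdots$ is in the same contiguity class as $c_{a_i}\times c_{b_j}$, which is the constant map at $(a_i,b_j)$; so each individual block $U_i\times V_j$ is a categorical subcomplex of $K\times L$. The genuine point is that the union over $i+j=k$ is still categorical: one wants a single vertex of $K\times L$ and a single contiguity-class chain witnessing that the inclusion of $W_k$ factors through it. Here I would exploit that distinct blocks in $W_k$ meet only in lower-dimensional overlaps and apply the homotopy (contiguity) moves on each block in a way compatible on overlaps — essentially the simplicial version of the ``category of a union of categorical sets joined along a categorical set'' argument, or, more cleanly, the observation that a deformation of $K$ sending all of $U_i$ to $a_i$ and a deformation of $L$ sending all of $V_j$ to $b_j$ can be run on $K\times L$ via Proposition~\ref{STUFF} to collapse each band $W_k$ appropriately after a reindexing.

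Concretely I would argue as follows. Realize the contiguity $i_K\restriction_{U_i}\sim c_{a_i}$ and $i_L\restriction_{V_j}\sim c_{b_j}$ and, for each fixed $k$, on the subcomplex $W_k$ define a simplicial deformation: on the block $U_i\times V_j$ (with $i+j=k$) use the product of the two chains supplied by Proposition~\ref{STUFF}(1) to reach the constant map $(a_i,b_j)$. Because on an intersection $(U_i\times V_j)\cap(U_{i'}\times V_{j'}) = (U_i\cap U_{i'})\times(V_j\cap V_{j'})$ with $i+j=i'+j'=k$ one has (say) $i<i'$ hence $j>j'$, the two prescriptions disagree, so a naive gluing fails; this is exactly the subtlety the topological proof handles by a careful ordering of the bands. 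The clean fix, which I expect to be the crux of the write-up, is the standard trick: enlarge slightly — replace $W_k$ by $W_k' = \bigcup_{i+j=k}(U_i\times V_j)$ but contract in \emph{stages}, first moving the $K$-coordinate of everything in the band to a common target using $\scat K$-data reindexed so that all blocks in a band share the same $U$-target, or — simpler — appeal directly to the general lemma that if $A_1,\dots,A_r$ are categorical in $X$ and pairwise disjoint then $A_1\cup\dots\cup A_r$ is categorical, after first disjointifying the bands using Proposition~\ref{MAX} (pass to unions of maximal simplices and assign each maximal simplex of $K\times L$ to exactly one band).

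Thus the key steps, in order, are: (1) set up the coverings and constant maps; (2) form the bands $W_k$, $0\le k\le m+n$, and check they cover $K\times L$; (3) show each band is categorical, which is the main obstacle — I would reduce it via Proposition~\ref{MAX} to the disjoint-union case (each maximal simplex of $K\times L$ lies in a unique band, and within a band in a unique block $U_i\times V_j$), then use Proposition~\ref{STUFF}(1) to contract each block to its vertex $(a_i,b_j)$, the contractions on distinct blocks now being on disjoint pieces and hence combinable into one contiguity chain; (4) conclude $\scat(K\times L)\le m+n$, i.e. $\scat(K\times L)+1\le (m+1)+(n+1)-1 \le (m+1)(n+1)$. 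The last inequality $m+n+1\le(m+1)(n+1)$ is the elementary fact $(m+1)(n+1)-(m+n+1)=mn\ge 0$. I expect step (3), and in particular making the per-block contractions genuinely assemble into a \emph{single} contiguity class for the whole band (rather than just each block separately), to be where the real care is needed; the disjointification via maximal simplices is what makes it go through cleanly.
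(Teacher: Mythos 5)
Your write-up already contains, as a side remark, a complete proof of the stated inequality, and it is exactly the paper's argument: by Proposition~\ref{STUFF}(1) each single block $U_i\times V_j$ is categorical in $K\times L$ (the product of the two contiguity chains carries $i_{U_i}\times i_{V_j}=i_{U_i\times V_j}$ to the constant map at $(a_i,b_j)$), and the $(m+1)(n+1)$ blocks cover $K\times L$, since any simplex of $K\times L$ projects to a simplex of $K$ contained in some $U_i$ and to a simplex of $L$ contained in some $V_j$. This immediately gives $\scat(K\times L)+1\leq(\scat K+1)(\scat L+1)$, which is all the theorem asserts; the bands are not needed.

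The band construction you place at the centre, which aims at the stronger additive bound $\scat(K\times L)\leq \scat K+\scat L$, has a genuine gap exactly at the point you flag as the crux. Passing to unions of maximal simplices via Proposition~\ref{MAX} only makes the blocks of a band $W_k$ disjoint \emph{as sets of maximal simplices}; two blocks $U_i\times V_j$ and $U_{i'}\times V_{j'}$ with $i+j=i'+j'$ can still share vertices and lower-dimensional faces coming from $(U_i\cap U_{i'})\times(V_j\cap V_{j'})$, and these shared faces survive the refinement. Consequently a simplicial map on $W_k$ is not just an independent choice of maps on the blocks, and the per-block contiguity chains, which head toward different target vertices $(a_i,b_j)$, will in general disagree on the common faces; they therefore do not assemble into a single chain of simplicial maps on $W_k$, and the lemma ``a disjoint union of categorical subcomplexes is categorical'' does not apply (even in the honestly vertex-disjoint case that lemma needs connectedness and an extra stage moving the several constants to a common vertex along edge paths). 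This separation step is precisely the normality/shrinking input used in the topological proof of the additive product formula, and it has no combinatorial substitute here; that is why the paper states and proves only the multiplicative bound. So, as written, your proposal establishes the theorem only through the block observation above, while the additive refinement remains unproved.
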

\begin{proof}
Suppose that $\scat K=n$ and $\scat L=m$, therefore there exists a
categorical covering   $U_0,...,U_n$ of $K$ and a categorical
covering $V_0,...,V_m$ of $L$. Consider the subcomplexes
$U_i\times V_j\subset  K\times L$, for $0\leq i \leq n$ and $0\leq
j\leq m$. We want to show that $U_i\times V_j$ form a  categorical
covering of $K\times L$.

Each inclusion map $i_{U_i}$ is in the same contiguity class of a
constant map $c_{u_i}$ where $u_i$ is a vertex of $K$;
analogously,  each inclusion $i_{V_j}$ is in the same contiguity
class of a constant map $c_{v_j}$ where $v_j$ is a vertex in $L$.
By  Proposition \ref{STUFF} the map $i_{U_i}\times i_{V_j}\colon
U_i\times V_j\rightarrow K\times L$ is in the same contiguity
class of the map $c_{v_i}\times c_{w_j}\colon U_i\times
V_j\rightarrow K\times L$. Clearly, $i_{U_i}\times
i_{W_j}=i_{U_i\times V_j}$ and  $c_{v_i}\times
c_{w_j}=c_{(v_i,w_j)}$, where  $(v_i,w_j)$ is a vertex of $K\times
L$. Therefore the subcomplexes $U_i\times V_j$ are categorical.

Now, we shall prove that $\{U_i\times V_j\}$ is a covering of
$K\times L$. If the simplex $\{(v_{0},w_{0}),...,(v_q,w_q)\}$ is
in $K\times L$ then $\{v_{0},...,v_q\}$ is contained in a
subcomplex $U_i$ of $K$ and $\{w_{0},...,w_q\}$ is contained in a
subcomplex $W_j$ of $L$. Then $\{(v_{0},w_{0}),...,(v_q,w_q)\}$ is
contained in $U_i\times V_j$. Thus, we conclude that
$\scat(K\times L)+1\leq (n+1)(m+1)$.
\end{proof}

\section{Whitehead construction}\label{6}
\subsection{}  It is well known that for topological spaces $X$ with ``good properties'' there is the following so-called Whitehead characterization of the topological LS-category (see \cite[Theorem~1.55]{CLOT2003}):
\begin{theorem}
$\cat X\leq n$ if and only if the diagonal map $\Delta \colon X
\to X^{n+1}$ factors (up to homotopy) through the so-called {\em
fat wedge} $T^{n+1}X$.
\end{theorem}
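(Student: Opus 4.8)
The plan is to prove both implications directly, using only that $X$ is ``good'' in the two senses that are actually needed: $X$ is paracompact (so that open covers admit subordinate partitions of unity) and the basepoint $\ast$ has a neighbourhood contractible inside $X$; both hold for CW-complexes, in particular for geometric realizations of finite simplicial complexes. Throughout write $T^{n+1}X=\{(x_0,\dots,x_n)\in X^{n+1}: x_i=\ast\ \text{for at least one }i\}$, let $j\colon T^{n+1}X\hookrightarrow X^{n+1}$ be the inclusion and $p_i\colon X^{n+1}\to X$ the projections, so that $p_i\circ\Delta=\id_X$.

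First I would do the implication ``$\cat X\le n$ $\Rightarrow$ factorization''. Fix an open categorical cover $X=U_0\cup\dots\cup U_n$ with contractions $H_i\colon U_i\times I\to X$, $H_i(\cdot,0)=\mathrm{incl}$, $H_i(\cdot,1)=\ast$, and choose a partition of unity $\{\lambda_i\}_{i=0}^n$ with $\overline{\{\lambda_i>0\}}\subset U_i$. Set $\phi_i(x)=\lambda_i(x)/\max_j\lambda_j(x)$; this is continuous because the denominator is bounded below by $1/(n+1)$, it vanishes on a neighbourhood of $X\setminus U_i$, and for every $x$ it equals $1$ for at least one index $i$. Define $f_{i,t}\colon X\to X$ by $f_{i,t}(x)=H_i(x,t\,\phi_i(x))$ on $U_i$ and $f_{i,t}(x)=x$ elsewhere; these patch to continuous maps. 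Then $F\eqdef(f_{0,1},\dots,f_{n,1})\colon X\to X^{n+1}$ has image in $T^{n+1}X$, since at each $x$ some coordinate is $\ast$, and $t\mapsto(f_{0,t},\dots,f_{n,t})$ is a homotopy from $\Delta=(f_{0,0},\dots,f_{n,0})$ to $j\circ F$; hence $\Delta$ factors through $T^{n+1}X$ up to homotopy.

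For the converse, suppose $f\colon X\to T^{n+1}X$ satisfies $j\circ f\simeq\Delta$. Pick an open neighbourhood $V$ of $\ast$ contractible in $X$ and put $U_i=(p_i\circ j\circ f)\inv(V)$. These are open and cover $X$: if $x\in X$ then $f(x)\in T^{n+1}X$, so some coordinate of $f(x)$ is $\ast\in V$, whence $x\in U_i$ for that $i$. Moreover $p_i\circ j\circ f\simeq p_i\circ\Delta=\id_X$, so restricting this homotopy to $U_i$ shows the inclusion $U_i\hookrightarrow X$ is homotopic to $(p_i\circ j\circ f)|_{U_i}\colon U_i\to V\hookrightarrow X$, which is null-homotopic because $V$ is contractible in $X$. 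Thus each $U_i$ is categorical and $\cat X\le n$.

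The hard part will be the first implication: one must build, out of an abstract categorical open cover, a single map into the fat wedge together with an explicit homotopy to the diagonal, and this is exactly where paracompactness enters through the partition of unity — the reparametrisation $\phi_i=\lambda_i/\max_j\lambda_j$ is the device that forces at least one coordinate to be fully contracted at each point while keeping all maps continuous. The converse is comparatively soft, the only place the ``good properties'' hypothesis is used being the existence of a neighbourhood of the basepoint that is contractible in $X$.
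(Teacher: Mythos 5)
Your proof is essentially correct, but be aware that the paper does not prove this statement at all: it is quoted as the classical Whitehead characterization with the reference \cite[Theorem~1.55]{CLOT2003}, so there is no internal argument to compare against. What you give is the classical direct proof: for the forward implication, a partition of unity subordinate to a categorical open cover, with the reparametrisation $\phi_i=\lambda_i/\max_j\lambda_j$ forcing at least one coordinate to be fully contracted at every point; for the converse, pulling back a neighbourhood of the basepoint that is contractible in $X$ along the coordinates $p_i\circ j\circ f$. Compared with the textbook treatment in \cite{CLOT2003}, which reaches the Whitehead characterization through more machinery (well-pointed spaces, homotopy pushout/Ganea-type constructions) as part of establishing the equivalence of several definitions of category, your route is more elementary and self-contained; the heavier approach buys the simultaneous equivalence with the Ganea definition and functoriality that is useful later. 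Two hypotheses you use silently should be stated: (i) $X$ must be path-connected, and you invoke this when you normalise each contraction $H_i$ to end at the common basepoint $\ast$ --- a categorical open set only contracts to \emph{some} point of $X$, so you must concatenate with a path to $\ast$ (without connectedness the theorem is false); (ii) the existence of a partition of unity with $\overline{\{\lambda_i>0\}}\subset U_i$ needs $X$ paracompact (or normal together with a shrinking of the cover), and the contractible neighbourhood $V$ of $\ast$ needs local contractibility or a nondegenerate basepoint --- all of which hold for CW-complexes, hence for $\geo{K}$. With these made explicit, both implications are sound: the maps $f_{i,t}$ glue continuously because $\phi_i$ vanishes on the open set $X\setminus\overline{\{\lambda_i>0\}}$, at each $x$ the index attaining $\max_j\lambda_j(x)$ yields a coordinate equal to $\ast$, and in the converse direction the restriction of the homotopy $p_i\circ j\circ f\simeq \id_X$ to $U_i$ exhibits the inclusion as null-homotopic.
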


This result is a very useful tool for computing LS-category. In
this section we shall try to adapt it to abstract simplicial
complexes.

First we define a simplicial version of the topological fat wedge
$T^nX$ of a topological space $X$ \cite[\textsection
1.6]{CLOT2003}. Also, we shall briefly develop the notion of {\em
pointed} contiguity class.

Let $K$ be an abstract simplicial complex and fix some vertex
$v_0$ of $K$ as a base point. For each $j=1,\dots,n$ let
\begin{equation}\label{SUBCOM}
K_j=\pi_j^{-1}(\{v_0\})= K\times\cdots \times
\{v_0\}\times\cdots\times K,
\end{equation}
be the subcomplex  of $K^n$ spanned by the vertices whose $j$-th
coordinate is the base point $v_0$.

\begin{definition}For $n\geq 1$ the {\em $n$-th fat wedge} $T^nK$ is the  subcomplex $K_1\cup\cdots\cup K_n\subset K^n$.
\end{definition}

For instance, $T^1K=\{v_0\}$ is a point and $T^2K$ is the wedge $K
\vee K$. Note that $T^nK$ is not a full subcomplex of $K^n$.

\medskip

A pointed map (that is, a simplicial map preserving the base
points) $\varphi\colon (K,v_0)\to (L,w_0)$ induces a simplicial
map $T^n\varphi\colon T^nK \to T^nL$, which is the restriction of
$\varphi^n\colon K^n \to K^n$.

\begin{proposition}Let $\varphi,\psi\colon K \to L$ be two contiguous simplicial maps preserving the base points. Then the induced maps $T^n\varphi,T^n\psi\colon T^nK\to T^nL$ are contiguous.
\end{proposition}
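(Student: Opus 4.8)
The plan is to reduce the statement directly to the contiguity of $\varphi^n$ and $\psi^n$ on the ambient complex $K^n$, together with the fact that the fat wedge $T^nK$ is mapped into $T^nL$. First I would observe that iterating Proposition~\ref{STUFF}(1) shows that $\varphi^n=\varphi\times\cdots\times\varphi$ and $\psi^n=\psi\times\cdots\times\psi$ are contiguous as maps $K^n\to L^n$; this is an immediate induction on $n$, the base case $n=1$ being the hypothesis $\varphi\sim_c\psi$, and the inductive step applying Proposition~\ref{STUFF}(1) to $\varphi\times\varphi^{n-1}\sim_c\psi\times\psi^{n-1}$.

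Next I would check that the contiguity of $\varphi^n$ and $\psi^n$ restricts to $T^nK$. Recall $T^nK=K_1\cup\cdots\cup K_n$, where $K_j=\pi_j^{-1}(\{v_0\})$. Since $\varphi$ and $\psi$ preserve the base point, for each $j$ the maps $\varphi^n$ and $\psi^n$ send $K_j$ into $L_j$, hence send $T^nK$ into $T^nL$; thus $T^n\varphi$ and $T^n\psi$ are well-defined as claimed in the paragraph preceding the proposition. Now let $\sigma=\{(v_1^{(0)},\dots,v_n^{(0)}),\dots,(v_1^{(p)},\dots,v_n^{(p)})\}$ be a simplex of $T^nK$; it is in particular a simplex of $K^n$, so $\varphi^n(\sigma)\cup\psi^n(\sigma)$ is a simplex of $L^n$ by the previous paragraph. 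It remains to see that this union actually lies in $T^nL$, i.e.\ in some $L_j$. Since $\sigma\in T^nK$, all its vertices lie in a common $K_j$ (because $\sigma$, being a simplex, is a subset of a single maximal simplex, and each maximal simplex of the union $K_1\cup\cdots\cup K_n$ lies in one $K_j$ — more directly, $\sigma\subset K_j$ for some $j$ means the $j$-th coordinate of every vertex of $\sigma$ is $v_0$). Then, since $\varphi(v_0)=w_0$ and $\psi(v_0)=w_0$, every vertex of $\varphi^n(\sigma)\cup\psi^n(\sigma)$ has $j$-th coordinate $w_0$, so the union lies in $L_j\subset T^nL$; being a simplex of $L^n$ contained in the subcomplex $L_j$, it is a simplex of $T^nL$. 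Therefore $T^n\varphi\sim_c T^n\psi$.

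The only subtle point — and the step I expect to require the most care — is the claim that a simplex of $T^nK$ is contained in a single $K_j$. This is where one uses that $T^nK$ is a \emph{union} of the subcomplexes $K_j$: a simplex of a union of subcomplexes need not lie in one of them in general, but here it does, because the vertices of a simplex $\sigma$ of $K^n$ all lie in $K_j$ precisely when $\sigma$ has a common $j$-th coordinate equal to $v_0$, and membership $\sigma\in K_1\cup\cdots\cup K_n$ forces $\sigma$ to be a simplex of some $K_j$ outright (a simplex of a simplicial complex that is a union $\bigcup K_j$ is, by definition of the underlying complex structure, a simplex of some $K_j$). Once this is in hand, the rest is the routine verification above. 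An alternative, slicker route avoiding even this point: restrict the contiguity $\varphi^n\sim_c\psi^n$ directly, noting that $T^nK$ is a subcomplex of $K^n$ and contiguity of maps on a complex restricts to contiguity of the restrictions to any subcomplex of the source (this is immediate from the definition of contiguity, which is a condition on simplices of the source); one then only needs that both restrictions land in $T^nL$, which is the base-point-preservation argument above.
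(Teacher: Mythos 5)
Your proposal is correct and follows essentially the same route as the paper's (very terse) proof: contiguity of $\varphi^n$ and $\psi^n$ via Proposition~\ref{STUFF}, then restriction to the fat wedge using that base-point preservation sends each $K_j$ into $L_j$. Your extra care with the two implicit points --- that a simplex of the union $K_1\cup\cdots\cup K_n$ is a simplex of some single $K_j$, and that a simplex of $L^n$ with all $j$-th coordinates equal to $w_0$ lies in $L_j$ (which is spanned by, i.e.\ full on, those vertices) --- just makes explicit what the paper leaves unsaid.
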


\begin{proof}  The maps $\varphi^n,\psi^n\colon K^n \to L^n$ are contiguous by Proposition \ref{STUFF}. Moreover they send each subcomplex  $K_j$  into itself.\end{proof}

\begin{corollary}\label{INVPOINT}Let $(K,v_0) \sim (L,w_0)$ be two pointed simplicial complexes with the same pointed strong homotopy type. That is, we assume that the
homotopy equivalences $\varphi,\psi$ between $K$ and $L$, as well
as the sequences of contiguous maps defining the relations
$\psi\circ\varphi\sim 1_K$ and $\varphi\circ\psi\sim 1_L$,
preserve the base points. Then $T^nK\sim T^nL$.
\end{corollary}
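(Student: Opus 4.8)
The plan is to exploit functoriality of the $T^n$ construction together with the proposition just proved. By hypothesis we have pointed simplicial maps $\varphi\colon (K,v_0)\to(L,w_0)$ and $\psi\colon (L,w_0)\to(K,v_0)$ together with finite sequences of contiguous pointed maps witnessing $\psi\circ\varphi\sim 1_K$ and $\varphi\circ\psi\sim 1_L$. Applying the functor $T^n$ yields simplicial maps $T^n\varphi\colon T^nK\to T^nL$ and $T^n\psi\colon T^nL\to T^nK$. The key point is that $T^n$ is a functor on the category of pointed complexes and contiguous-preserving maps: since $T^n\varphi$ is the restriction of $\varphi^n$ to $T^nK$ and $\varphi^n\psi^n=(\psi\circ\varphi)^n$ (each $\varphi^n$ being applied coordinatewise), we get $T^n\psi\circ T^n\varphi = T^n(\psi\circ\varphi)$ and likewise $T^n\varphi\circ T^n\psi = T^n(\varphi\circ\psi)$, and moreover $T^n(1_K)=1_{T^nK}$.

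The remaining step is to promote the relation $\psi\circ\varphi\sim 1_K$ to $T^n(\psi\circ\varphi)\sim 1_{T^nK}$. Writing out the contiguity class as a chain $\psi\circ\varphi=\chi_1\sim_c\chi_2\sim_c\cdots\sim_c\chi_m=1_K$ of pointed simplicial maps $K\to K$, I would apply the previous Proposition (that pointed contiguous maps induce contiguous maps on fat wedges) to each consecutive pair, obtaining $T^n\chi_1\sim_c T^n\chi_2\sim_c\cdots\sim_c T^n\chi_m$, hence $T^n(\psi\circ\varphi)=T^n\chi_1\sim T^n\chi_m=T^n(1_K)=1_{T^nK}$. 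Combining with the functoriality identity above gives $T^n\psi\circ T^n\varphi\sim 1_{T^nK}$, and symmetrically $T^n\varphi\circ T^n\psi\sim 1_{T^nL}$. By the Barmak–Minian characterization of strong homotopy type in terms of contiguity (\cite[Corollary 2.12]{BARMAKMINIAN2012}), this is exactly the statement $T^nK\sim T^nL$.

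One small subtlety to verify is that each $\chi_i$ in the chain is a \emph{pointed} map, i.e.\ that the contiguity sequences in the hypothesis can be taken through pointed maps — but this is precisely what is being assumed in the statement of the corollary (``the sequences of contiguous maps $\dots$ preserve the base points''), so no extra work is needed. A second point is that the Proposition as stated requires the two maps to be contiguous (not merely in the same contiguity class), which is why I break the chain into consecutive contiguous pairs before applying it; this is the standard device already used, e.g., in the proof of Proposition~\ref{SDCONT}. I do not expect any real obstacle here: the argument is a formal consequence of functoriality plus the preceding proposition, entirely parallel to the proof of Theorem~\ref{SUBDIV} where the same scheme was applied with $\sd$ in place of $T^n$.
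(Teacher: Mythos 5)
Your argument is correct and is essentially the proof the paper intends (the corollary is stated without proof as a direct consequence of the preceding proposition): apply $T^n$ functorially, break the pointed contiguity chains into consecutive contiguous pairs, use the proposition on each pair, and conclude via the contiguity characterization of strong homotopy type. The only cosmetic slip is the order in ``$\varphi^n\psi^n=(\psi\circ\varphi)^n$'', which should read $\psi^n\circ\varphi^n=(\psi\circ\varphi)^n$, but this does not affect the argument.
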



\subsection{} We are now in position to discuss the Whitehead formulation of the simplicial LS-category. In order to be systematic
we follow the approach of \cite[\textsection 1.6]{CLOT2003}, by
defining a so-called {\em simplicial Whitehead category} $\wscat
K$ and trying to compare it to the simplicial LS-category $\scat
K$.

\begin{definition}\label{WHITE}We say that $\wscat K\leq n$
if the diagonal map $\Delta\colon K \to K^{n+1}$ factors through
the fat wedge $T^{n+1}K$ up to contiguity class. That is, there
exists some simplicial map $\delta\colon K \to T^{n+1}K$ such that
$I\circ \delta \sim \Delta$, where we denote by $I\colon T^{n+1}K
\subset K^{n+1}$ the inclusion (see Figure \ref{WHITEHEAD}).
\end{definition}

\begin{figure}[h]
$\xymatrix{
\ T^{n+1}K\ \ar@{^{(}->}[r]^{\ I\ } &\ K^{n+1}\\
K\ar@{-->}[u]^{\delta}\ar[ru]_{\Delta}&}$ \caption{Simplicial
Whitehead LS-category} \label{WHITEHEAD}
\end{figure}

\begin{theorem}\label{INEQ} $\scat K\leq \wscat K$.
\end{theorem}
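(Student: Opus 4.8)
The plan is to mimic the standard topological proof that $\cat X \leq \wcat X$, transported into the simplicial/contiguity setting. Suppose $\wscat K \leq n$, so there is a simplicial map $\delta\colon K \to T^{n+1}K$ with $I\circ\delta \sim \Delta\colon K \to K^{n+1}$. Recall $T^{n+1}K = K_1\cup\cdots\cup K_{n+1}$ where $K_j = \pi_j^{-1}(\{v_0\})\subset K^{n+1}$. The idea is to pull back this covering of the fat wedge along $\delta$ to obtain a categorical covering of $K$ with $n+1$ pieces, proving $\scat K \leq n$.

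Concretely, I would set $U_j = \delta^{-1}(K_j)$ for $j=1,\dots,n+1$; since $\delta$ is simplicial these are genuine subcomplexes of $K$, and since the $K_j$ cover $T^{n+1}K$ and $\delta$ lands in $T^{n+1}K$, the $U_j$ cover $K$. It remains to show each $U_j$ is categorical in $K$. The key observation is that on $K_j$ the $j$-th projection $\pi_j\colon K^{n+1}\to K$ is constant equal to $v_0$, so $\pi_j\circ I|_{K_j} = c_{v_0}$. Now consider the inclusion $i_{U_j}\colon U_j \subset K$. Since $I\circ\delta\sim\Delta$, composing with $\pi_j$ and restricting to $U_j$ gives $\pi_j\circ I\circ\delta|_{U_j} \sim \pi_j\circ\Delta|_{U_j} = i_{U_j}$ (here I use that a contiguity class is preserved under post-composition with a fixed simplicial map and under restriction to a subcomplex — the latter being essentially the content used throughout the paper, e.g. in the proof of Lemma \ref{SUBCAT}). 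On the other hand $\delta(U_j)\subset K_j$, so $\pi_j\circ I\circ\delta|_{U_j} = (\pi_j\circ I|_{K_j})\circ \delta|_{U_j} = c_{v_0}\circ\delta|_{U_j}$, which is the constant map at $v_0$. Chaining these, $i_{U_j}\sim c_{v_0}$, so $U_j$ is categorical in $K$. Hence $\{U_1,\dots,U_{n+1}\}$ is a categorical covering of $K$ by $n+1$ subcomplexes, giving $\scat K \leq n = \wscat K$.

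The main point requiring care — the step I expect to be the only real obstacle — is the compatibility of contiguity classes with restriction and with post-composition: specifically that $I\circ\delta\sim\Delta$ in $K^{n+1}$ implies $\pi_j\circ(I\circ\delta)|_{U_j} \sim \pi_j\circ\Delta|_{U_j}$ in $K$. Post-composition with the simplicial map $\pi_j$ is immediate from the definition of contiguity (if $\varphi(\sigma)\cup\psi(\sigma)$ is a simplex then so is its image under $\pi_j$). Restriction to the subcomplex $U_j\subset K$ is also routine, since a sequence of contiguous maps $K\to K^{n+1}$ restricts term-by-term to a sequence of contiguous maps $U_j\to K^{n+1}$; one does not even need the fullness hypothesis of Proposition \ref{FULL} here because the target is not being shrunk. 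Everything else is bookkeeping about the projections and the definition of $T^{n+1}K$, so the argument is short.
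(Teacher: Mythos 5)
Your argument is essentially the paper's own proof: pull back the subcomplexes $K_j$ of the fat wedge along $\delta$, observe the preimages cover $K$, and show each is categorical by post-composing the contiguity sequence from $I\circ\delta\sim\Delta$ with $\pi_j$ and restricting to the piece, so that the inclusion becomes contiguous-equivalent to $c_{v_0}$. In fact your choice $U_j=\delta^{-1}(K_j)$ is the intended reading of the paper's $L_j$ (which is misprinted there as $\Delta^{-1}(K_j)$ but used exactly as you use it), so the proposal is correct and follows the same route.
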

\begin{proof}
Assume that $\wscat K=n$, and let $\delta\colon K \to T^{n+1}K$ be
as in Definition~\ref{WHITE}. Let $\pi_j\colon K^{n+1} \to K$ be
the projections onto each factor. Let $K_j\subset T^{n+1}K$,
$j=1,\dots,n+1$,  be the subcomplexes defined in (\ref{SUBCOM}).

Since $I\circ \delta \sim \Delta$, there exists a sequence $I\circ
\delta=\varphi_1, \varphi_2,\dots,\varphi_m=\Delta$ of maps such
that $\varphi_i$ and $\varphi_{i+1}$ are contiguous. Call
$L_j=\Delta^{-1}(K_j)$  the preimages of the $K_j\subset
T^{n+1}K$ defined above. Clearly, $K=L_1\cup\cdots\cup L_{n+1}$.
It only remains to show that each subcomplex $L_j\subset K$ is
categorical, that is, each inclusion map $I_j\colon L_j\subset K$
is in the same contiguity class that a constant map. Let us prove
it:

Since $\varphi_i\sim_c \varphi_{i+1}\colon K\to K^{n+1}$ it
follows that
$$\pi_j\circ\varphi_i\circ I_j\sim_c \pi_j\circ\varphi_{i+1}\circ I_j\colon L_j\to K.$$
Now, $\pi_j\circ\varphi_1\circ I_j=\pi_j\circ I\circ \delta\circ
I_j$, which is the constant map $c_{v_0}$ because
$\Delta(L_j)\subset K_j$. On the other hand,
$\pi_j\circ\varphi_m\circ I_j=\pi_j\circ\Delta\circ I_j$ is the
inclusion $I_j$. Then $I_j\sim c_{v_0}$.

We have found $n+1$ categorical subcomplexes covering $K$,
therefore $\scat K\leq n$.
\end{proof}


\subsection{} In this section we shall prove that the converse inequality of Theorem \ref{INEQ}
is not true, by exhibiting an example of a complex $K$ such that
$\scat K=1$ while $\wscat K>2$. The proof of the next propositions
is inspired by a result about finite co-H-spaces proved in
\cite{HV}.  We thank J.~Oprea for pointing out this reference to
us and R.D. Helmstutler for some explanations about his paper.

First we prove that the simplicial Whitehead category is an
invariant of the pointed strong homotopy type.

\begin{proposition}If $(K,v_0)\sim (L,v_0)$ is a pointed strong equivalence as in Corollary \ref{INVPOINT}, then $\wscat K=\wscat L$.
\end{proposition}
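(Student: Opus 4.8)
The plan is to show that $\wscat$ is a pointed strong homotopy invariant by exploiting functoriality of the fat wedge construction together with the fact, established in Corollary~\ref{INVPOINT}, that $T^nK\sim T^nL$ whenever $(K,v_0)\sim(L,w_0)$ are pointed strong equivalent. Let $\varphi\colon K\to L$ and $\psi\colon L\to K$ be the pointed simplicial maps realizing the pointed strong equivalence, so $\psi\circ\varphi\sim 1_K$ and $\varphi\circ\psi\sim 1_L$ via pointed sequences of contiguous maps. I would prove the inequality $\wscat L\leq\wscat K$ and then appeal to symmetry.

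First I would assume $\wscat K=n$ and take a simplicial map $\delta\colon K\to T^{n+1}K$ with $I_K\circ\delta\sim\Delta_K$, where $I_K\colon T^{n+1}K\subset K^{n+1}$ is the inclusion. The natural candidate for a factorization of $\Delta_L$ is the composite $\delta'=(T^{n+1}\varphi)\circ\delta\circ\psi\colon L\to T^{n+1}L$. The key computation is to chase the square: since $T^{n+1}\varphi$ is the restriction of $\varphi^{n+1}\colon K^{n+1}\to L^{n+1}$, we have $I_L\circ T^{n+1}\varphi=\varphi^{n+1}\circ I_K$, hence
\[
I_L\circ\delta'=\varphi^{n+1}\circ I_K\circ\delta\circ\psi\sim\varphi^{n+1}\circ\Delta_K\circ\psi=\Delta_L\circ\varphi\circ\psi,
\]
where the middle relation uses $I_K\circ\delta\sim\Delta_K$ together with the fact that composing a chain of contiguous maps with fixed simplicial maps on either side preserves the contiguity class (a standard fact already used repeatedly, e.g. in Lemma~\ref{SUBCAT}), and the last equality is the naturality of the diagonal, $\varphi^{n+1}\circ\Delta_K=\Delta_L\circ\varphi$. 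Finally $\Delta_L\circ\varphi\circ\psi\sim\Delta_L\circ 1_L=\Delta_L$, again because $\varphi\circ\psi\sim 1_L$ and post-composition with $\Delta_L$ respects contiguity classes. Therefore $I_L\circ\delta'\sim\Delta_L$, which shows $\wscat L\leq n=\wscat K$.

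By interchanging the roles of $K$ and $L$ (using $\psi$, $\varphi$ and $\psi\circ\varphi\sim 1_K$ in place of $\varphi$, $\psi$ and $\varphi\circ\psi\sim 1_L$), the same argument gives $\wscat K\leq\wscat L$, so $\wscat K=\wscat L$. I do not anticipate a serious obstacle here; the only point requiring a little care is verifying the identity $I_L\circ T^{n+1}\varphi=\varphi^{n+1}\circ I_K$ at the level of simplicial maps and confirming that $T^{n+1}\varphi$ is well-defined as a map into $T^{n+1}L$ — this is exactly the observation, already recorded before Corollary~\ref{INVPOINT}, that $\varphi^n$ sends each subcomplex $K_j$ into $L_j$ because $\varphi$ is pointed. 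Once that is in hand, the proof is a formal diagram chase relying only on the compatibility of contiguity classes with composition.
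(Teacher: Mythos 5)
Your proposal is correct and is essentially the paper's own argument: the paper likewise transports the factorization along the strong equivalence, defining $\delta_K = T^{n+1}\psi\circ\delta_L\circ\varphi$ and using $I_K\circ T^{n+1}\psi=\psi^{n+1}\circ I_L$, naturality of the diagonal, and compatibility of contiguity classes with composition, then invokes symmetry. The only difference is the (immaterial) choice of which inequality to prove first.
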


\begin{proof}Let $\wscat L=n$. Consider the diagram in Figure \ref{WHITEHEADKL} and the strong equivalences $K^{n+1}\sim L^{n+1}$ (Corollary \ref{CONTPROD}) and $T^{n+1}K\sim T^{n+1}L$ (Corollary \ref{INVPOINT}).

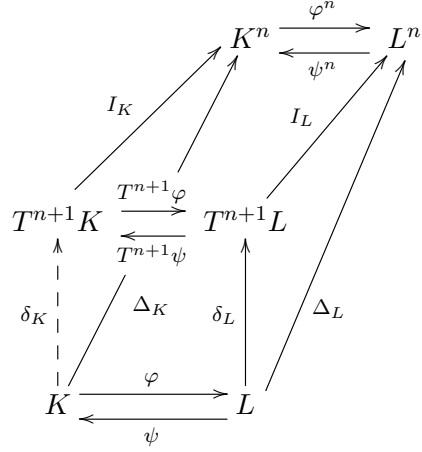
\begin{figure}[h]
\begin{center}
$\xymatrix{
&\ K^n\ar@<+1ex>[r]^{\varphi^n}&\ L^n\ar@<+1ex>[l]^{\psi^n}&&\\
&&&\\
{\ T^{n+1}K\ }\ar@<+1ex>[ruu]^{\ I_K}\ar@<+1ex>[r]^{T^{n+1}\varphi}&{\ T^{n+1}L\ }\ar@<+1ex>[l]^{T^{n+1}\psi} \ar[ruu]^{\ I_L}&&\\
&&&\\
K\ar@{-->}[uu]^{\delta_K}\ar[ruuuu]_(0.3){\
\Delta_K}|{\hueco}\ar@<+1ex>[r]^{\varphi}&L\ar@<+1ex>[l]^\psi\ar[uu]^{\delta_L}\ar@<-1ex>[ruuuu]_(0.3){\Delta_L\
}&& }$ \caption{Homotopy invariance} \label{WHITEHEADKL}
\end{center}
\end{figure}

With the obvious notations, from the relations
$$\psi\circ\varphi\sim 1_K, \quad \varphi\circ\psi\sim 1_L,$$ it follows that
$$\psi^{n+1}\circ\varphi^{n+1}\sim 1_{K^{n+1}}, \quad \varphi^{n+1}\circ\psi^{n+1}\sim 1_{L^{n+1}},$$ as well as
$$T^{n+1}\psi\circ T^{n+1}\varphi\sim 1_{T^{n+1}K}, \quad T^{n+1}\varphi\circ T^{n+1}\psi\sim 1_{T^{n+1}L}.$$

Moreover, from the definitions, we have
$$I_L\circ T^{n+1}\varphi=\varphi^n\circ I_K, \quad I_K\circ T^{n+1}\psi=\psi^n\circ I_L,$$ as well as
$$\Delta_L\circ\varphi= \varphi^n\circ\Delta_K, \quad \Delta_K\circ\psi=\psi^n\circ\Delta_L.$$

Define
$$\delta_K\eqdef T^{n+1}\psi\circ\delta_L\circ\varphi.$$

Since, from hypothesis, $I_L\circ\delta_L\sim\Delta_L$, it follows
\begin{align*}
I_K\circ\delta_K&=(I_K\circ T^{n+1}\psi)\circ\delta_L\circ\varphi\\
&=\psi^n\circ (I_L\circ\delta_L)\circ\varphi\sim(\psi^n\circ \Delta_L)\circ\varphi\\
&=\Delta_K\circ(\psi\circ\varphi)\sim 1_K.
\end{align*}
Then $\wscat K\leq \scat L$. The other inequality is proved in the
same way.
\end{proof}

Clearly, we may assume that the base point $v_0$ is in the core
$K_0$, hence eliminating dominated vertices  is a pointed
equivalence $(K,v_0)\sim (K_0,v_0)$.

\begin{corollary}\label{CORE}The simplicial Whitehead category of a complex equals that of  its core,
$\wscat K=\wscat K_0$.
\end{corollary}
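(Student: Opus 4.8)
The plan is to derive this from the preceding Proposition, which asserts that $\wscat$ is an invariant of the pointed strong homotopy type, together with the observation recorded in the remark just above: that the passage $K\rightsquigarrow K_0$ to the core can be realized as a \emph{pointed} strong equivalence once the base point is suitably located.

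Concretely, I would first fix a core $K_0$ of $K$, realized by a finite chain of elementary strong collapses
$$K=K^{(0)}\seco K^{(1)}\seco\cdots\seco K^{(r)}=K_0,$$
where the $i$-th step deletes the open star of a vertex $w_i$ dominated in $K^{(i-1)}$. Since $K_0$ is the terminal complex of this chain, none of the $w_i$ belongs to $K_0$. Now I would choose the base point $v_0$ to be any vertex of $K_0$: this is admissible because $K_0\subset K$, so $v_0$ is a vertex of $K$ as well, and it is exactly the freedom alluded to in the remark. With this choice, $v_0\in K^{(i)}$ for every $i$, hence $w_i\neq v_0$ for all $i$.

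Next I would check pointedness. The retraction $r_i\colon K^{(i-1)}\to K^{(i)}$ fixes every vertex other than $w_i$, so $r_i(v_0)=v_0$; therefore the composite $\varphi=r_r\circ\cdots\circ r_1\colon K\to K_0$ and the inclusion $\psi\colon K_0\hookrightarrow K$ are pointed simplicial maps. One has $\varphi\circ\psi=\id_{K_0}$, while $\psi\circ\varphi\sim\id_K$ through the concatenation of the standard contiguities $\iota_i\circ r_i\sim_c\id_{K^{(i-1)}}$ (see \cite[Corollary 2.12]{BARMAKMINIAN2012}); every simplicial map occurring in these contiguity sequences fixes $v_0$, so the sequences are pointed. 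Thus $(K,v_0)\sim(K_0,v_0)$ is a pointed strong equivalence in the sense of Corollary \ref{INVPOINT}, and the preceding Proposition yields $\wscat K=\wscat K_0$.

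The only point requiring a little care is the base-point bookkeeping of the third paragraph, namely confirming that both the collapse sequence and the associated contiguities can be arranged so as never to move $v_0$; but this is automatic once $v_0$ is selected inside $K_0$, since then $v_0$ is present at every stage of the collapse and is never the vertex being dominated away.
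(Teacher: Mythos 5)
Your argument is correct and takes essentially the same route as the paper: the paper's proof is just the remark that one may assume the base point $v_0$ lies in the core, so that eliminating dominated vertices gives a pointed strong equivalence $(K,v_0)\sim(K_0,v_0)$, after which the preceding proposition on pointed invariance of $\wscat$ applies. Your proposal merely spells out the base-point bookkeeping that the paper leaves implicit.
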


We are now in a position to prove the main results of this
section.

\begin{lemma}{\cite[Prop.~2.7]{BARMAKMINIAN2012}}\label{MINID} Let $K_0$ be a minimal complex and let $\varphi\colon K_0 \to K_0$ be a simplicial map which lies in the same contiguity class as the identity.
 Then $\varphi$ is the identity.
\end{lemma}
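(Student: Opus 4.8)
The plan is to use a dimension/cardinality argument that forces a single contiguity step $\varphi\sim_c\id$ to already be the identity, and then iterate. First I would reduce to the case of one contiguity step: if $\varphi=\varphi_0\sim_c\varphi_1\sim_c\cdots\sim_c\varphi_k=\id$, then each $\varphi_i$ is a simplicial self-map of $K_0$, so it suffices to show that any $\varphi\colon K_0\to K_0$ with $\varphi\sim_c\id$ equals $\id$; applying this to the last step gives $\varphi_{k-1}=\id$, and descending we reach $\varphi_0=\varphi=\id$. So fix $\varphi\sim_c\id$.

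The key point is that for every simplex $\sigma$ of $K_0$ the set $\varphi(\sigma)\cup\sigma$ is a simplex of $K_0$. Taking $\sigma=\{v\}$ a vertex, $\{v,\varphi(v)\}$ is a simplex, so $\varphi(v)$ lies in the closed star of $v$; more useful is the following: for a \emph{maximal} simplex $\sigma$, $\varphi(\sigma)\cup\sigma$ is a simplex containing the maximal simplex $\sigma$, hence $\varphi(\sigma)\cup\sigma=\sigma$, i.e. $\varphi(\sigma)\subseteq\sigma$ for every maximal simplex $\sigma$. I would then fix a vertex $v$ and look at all maximal simplices $\sigma_1,\dots,\sigma_r$ containing $v$; for each of them $\varphi(v)\in\sigma_i$, so $\varphi(v)\in\bigcap_i\sigma_i$. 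The claim I want is $\bigcap_i\sigma_i=\{v\}$, which says exactly that $v$ is not dominated: indeed, if $w\in\bigcap_i\sigma_i$ with $w\neq v$, then every maximal simplex containing $v$ also contains $w$, so $v$ is dominated by $w$, contradicting minimality of $K_0$. Hence $\varphi(v)=v$ for all vertices $v$, so $\varphi=\id$.

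The main obstacle is getting the reduction to maximal simplices exactly right: one must be careful that $\varphi(\sigma)$ could a priori be a \emph{proper} face of $\sigma$ for a maximal $\sigma$, and this is precisely why one passes to the intersection of \emph{all} maximal simplices through a fixed vertex rather than arguing simplex-by-simplex. Once the identity ``$\{$vertices in every maximal simplex through $v\}=\{v\}$ iff $v$ is not dominated'' is in hand (this is just the definition of dominated vertex, rephrased), the argument closes immediately. I would also note that no connectivity of $K_0$ is needed, since the argument is entirely local at each vertex.
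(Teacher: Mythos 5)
Your argument is correct: reducing to a single contiguity step, using maximality to get $\varphi(\sigma)\subseteq\sigma$ for every maximal simplex $\sigma$, and then invoking the absence of dominated vertices to conclude $\varphi(v)=v$ is exactly the standard proof of this statement. The paper itself gives no proof (it only cites Proposition~2.7 of Barmak--Minian), and your argument coincides with the one in that reference, so there is nothing to correct.
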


\begin{theorem}\label{ONEZERO}Let $K$ be a simplicial complex such that $\wscat K\leq 1$. Then $K$ is strongly collapsible, which is equivalent to $\scat K=0$.
\end{theorem}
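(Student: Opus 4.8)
The plan is to use the hypothesis $\wscat K \le 1$ to produce a factorization of the diagonal $\Delta\colon K \to K\times K$ through the fat wedge $T^2K = K\vee K$, and then exploit the rigidity of contiguity on minimal complexes (Lemma~\ref{MINID}) to conclude. By Corollary~\ref{CORE} we may replace $K$ by its core $K_0$, so assume $K=K_0$ is minimal; it then suffices to show $K$ is a single point. By definition of $\wscat$, there is a simplicial map $\delta\colon K \to T^2K = K_1\cup K_2 \subset K\times K$ with $I\circ\delta \sim \Delta$, where $K_1 = \{v_0\}\times K$ and $K_2 = K\times\{v_0\}$.

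First I would compose with the two projections $\pi_1,\pi_2\colon K\times K \to K$. From $I\circ\delta\sim\Delta$ we get $\pi_j\circ I\circ\delta \sim \pi_j\circ\Delta = \id_K$ for $j=1,2$. Since $K$ is minimal, Lemma~\ref{MINID} forces $\pi_j\circ I\circ\delta = \id_K$ on the nose, for both $j=1$ and $j=2$. Write $\delta(v) = (a(v),b(v))$ in coordinates; then $a = \pi_1\circ I\circ\delta = \id_K$ and $b = \pi_2\circ I\circ\delta = \id_K$. Hence $\delta(v) = (v,v)$ for every vertex $v$, i.e. $\delta = \Delta$ as a map into $K\times K$. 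But the image of $\delta$ must lie inside $T^2K = K_1\cup K_2$, so for every vertex $v$ of $K$ the pair $(v,v)$ lies in $K_1\cup K_2$, which means $v = v_0$. Therefore $K$ has the single vertex $v_0$, so $K$ is a point and in particular strongly collapsible; equivalently $\scat K=0$.

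The main obstacle — and the place where the argument is genuinely using the hypothesis rather than formalities — is the passage from $I\circ\delta\sim\Delta$ to the honest equality $\pi_j\circ I\circ\delta = \id_K$. This is exactly where minimality of $K_0$ is essential: on a non-minimal complex a self-map contiguous to the identity need not be the identity, and indeed $\wscat$ is only a pointed strong homotopy invariant, not literally sensitive to the representative. One should check carefully that $\pi_j\circ I\circ\delta$ is a simplicial \emph{self-map} of $K$ (it is, being a composite of simplicial maps $K\to T^2K\to K^2\to K$) and lies in the contiguity class of $\id_K$ (it does, since contiguity classes are preserved under post-composition with a fixed simplicial map, here $\pi_j$), so Lemma~\ref{MINID} applies. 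Everything else — the coordinate description of $\delta$, and the observation that membership in $K_1\cup K_2$ of a diagonal vertex $(v,v)$ forces $v=v_0$ — is routine. The final clause ``equivalent to $\scat K=0$'' is just the characterization recalled in Section~\ref{2}.
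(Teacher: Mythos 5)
Your proof is correct and follows essentially the same route as the paper's: reduce to the core via Corollary~\ref{CORE}, compose the factorization with the projections $\pi_1,\pi_2$, invoke Lemma~\ref{MINID} to upgrade contiguity-to-identity into equality, and conclude that every vertex must equal the base point $v_0$. The only (cosmetic) differences are that you treat $\wscat K\leq 1$ uniformly instead of splitting off the case $\wscat K=0$, and you phrase the endgame as ``$I\circ\delta=\Delta$ forces each diagonal vertex $(v,v)$ into $K_1\cup K_2$'' rather than via the preimages $L_1,L_2$ as in the paper.
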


\begin{proof}If $\wscat K=0$ then the result follows from Theorem \ref{INEQ}.

If $\wscat K=1$ we have, from Corollary \ref{CORE}, that $\wscat
K_0=1$. This means that there exists a simplicial map
$\varphi\colon K_0 \to T^2K_0$ such that $i\circ \varphi\sim
\Delta$, where $\Delta\colon K_0\to (K_0)^2$ is the diagonal map.
Let $\pi_1,\pi_2 \colon (K_0)^2 \to K_0$ be the projections of the
categorical product. Then
$$i\circ\varphi \sim \Delta \implies \pi_1\circ i\circ\varphi \sim \pi_1\circ \Delta =1\implies  \pi_1\circ i\circ\varphi  =1$$
because $K_0$ is a minimal complex (Lemma \ref{MINID}). Remember
that in the proof of Theorem \ref{INEQ} we denoted by $L_1$
(respectively, $L_2$) the subcomplex $\varphi^{-1}(\{v_0\}\times
K)$ (resp. $\varphi^{-1}(K\times \{v_0\})$).Then, for $v\in L_1$
we have $$v_0=\pi_1\circ i\circ \varphi(v)=v$$ because
$\varphi(v)\in L_1$, which shows that $L_1=\{v_0\}$. Analogously
$$\pi_2\circ i\circ \varphi =1=\pi_2\circ \Delta$$ proves that $L_2=\{v_0\}$. Then the core $K_0=L_1\cup L_2=\{v_0\}$ is a point, that is, the complex $K$ has the strong homotopy type of a point.
\end{proof}

\begin{example}The complex $K$ in \figref{MOTHER} has $\scat K=1$ so it is not strongly equivalent to a point. From Theorem \ref{ONEZERO} its  Whitehead simplicial category is at least $2$. Then $\wscat K>\scat K$.
\end{example}


\section{Cofibrations}\label{7}

\subsection{}We now briefly discuss the notion of cofibration in the simplicial setting.

The ``homotopy extension property'' is a very important notion in
topology. A cofibration is a map $A\to X$ which satisfies the
homotopy extension property with respect to all spaces.  It is
well known (see \cite[p. 14]{HATCHER2002} or \cite[p. 68]{LW1969})
that if $A$ is a subcomplex of a CW-complex $X$ then the pair
$(X,A)$ has the homotopy extension property. Therefore, if
$L\subset K$ is a subcomplex of a simplicial complex,  the pair
$(\geo{K},\geo{L})$ of the geometric realizations has the (topological) homotopy extension
property. We want to define a purely combinatorial analogue for a
simplicial pair $(K,L)$.

\begin{definition}
A simplicial map $i\colon L\to K$ has the {\em contiguity
extension property} if given two simplicial maps $\varphi,
\psi\colon L \to M$ which lie in the same contiguity class,
$\varphi\sim\psi$, and given an extension of $\varphi$ (that is, a
simplicial map  $\tilde\varphi \colon K \to M$ such that
$\varphi\circ i=\varphi$), there exists an extension $\tilde\psi$
of $\psi$ with $\tilde\varphi\sim\tilde\psi$ (cf. Figure
\ref{FIG}).

\begin{figure}[h]
$\xymatrix{
\ L\ \ar[dd]_i\ar@<+1ex>[rr]^{\ \varphi\ } \ar@<-1ex>[rr]_{\ \psi\ }&&\ M\\
&&\\
K\ar@<-1ex>[rruu]^{\
\tilde\varphi}\ar@{-->}@<-3ex>[rruu]_{\tilde\psi\ }&&}$
\caption{Contiguity extension property.} \label{FIG}
\end{figure}
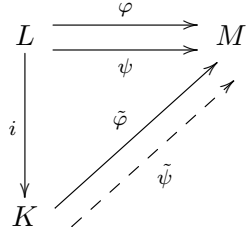

\end{definition}

\begin{example}Let $K_0$ be the core of the complex $K$. Then the inclusion $i\colon K_0\subset K$
is a simplicial cofibration. In fact, as explained before, there
is a simplicial retraction $r\colon K \to K_0$ such that $ r\circ
i =1_{K_0}$ and $i\circ r\sim 1_K$. Then one can take $\tilde \psi
=r\circ \psi$ because $\tilde \varphi\sim \varphi\circ r$.
\end{example}

As an application of Theorem \ref{ONEZERO}, we shall show that
there are simplicial pairs $(K,L)$ that do not have the extension
property.

\begin{theorem}Let $K$ be a connected simplicial complex with $\scat K=n$. Assume that there is a categorical covering $L_1,\dots L_{n+1}$ of $K$ such that all the pairs $(K,L_j)$
have the contiguity extension property. Then $\wscat K=n$.
\end{theorem}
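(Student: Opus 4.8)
The plan is to show that the hypotheses force the diagonal $\Delta\colon K \to K^{n+1}$ to factor (up to contiguity) through the fat wedge $T^{n+1}K$, which by Definition~\ref{WHITE} gives $\wscat K\leq n$; the reverse inequality $\wscat K\geq n$ then follows from Theorem~\ref{INEQ}, since $\scat K\leq\wscat K$ and $\scat K=n$. So the real work is the construction of a factoring map $\delta\colon K \to T^{n+1}K$.

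Here is how I would build $\delta$. Each $L_j$ is categorical, so the inclusion $i_j\colon L_j\subset K$ lies in the same contiguity class as a constant map $c_{v_j}$. Since the $L_j$ cover $K$, the collection of maps $(i_1,\dots,i_{n+1})$ restricted appropriately is related to the collection of constant maps. More precisely, consider the map $K\to K^{n+1}$ that I want to be a lift of $\Delta$: on the diagonal we have $\Delta=(i_K,\dots,i_K)$, and I want to replace, ``near $L_j$'', the $j$-th coordinate by the base point $v_0$. The contiguity extension property is exactly what lets me carry out this replacement coherently. Concretely, fix $j$. The two maps $\varphi=i_j\colon L_j\to K$ and $\psi=c_{v_0}\colon L_j\to K$ — wait, these need to be in the same contiguity class, so I must first arrange the base point: choosing $v_0$ to be one of the $v_j$'s, or using connectedness of $K$ to move the constant map $c_{v_j}$ to $c_{v_0}$ through a contiguity class (a path of vertices along edges gives a sequence of contiguous constant maps, since $K$ is connected). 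Thus $i_j\sim c_{v_0}\colon L_j\to K$. Now $\tilde\varphi=1_K\colon K\to K$ is an extension of $\varphi=i_j$; by the contiguity extension property of $(K,L_j)$ there is an extension $\tilde\psi_j\colon K\to K$ of $c_{v_0}\colon L_j\to K$ with $\tilde\psi_j\sim 1_K$, i.e.\ a simplicial map $r_j\colon K\to K$ with $r_j|_{L_j}=c_{v_0}$ and $r_j\sim 1_K$.

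With the $r_j$ in hand, define $\delta\eqdef(r_1,\dots,r_{n+1})\colon K\to K^{n+1}$. Since $r_j|_{L_j}$ is the constant map at $v_0$, for any vertex $v\in L_j$ the $j$-th coordinate of $\delta(v)$ is $v_0$, so $\delta(L_j)\subset K_j$; as the $L_j$ cover $K$, we get $\delta(K)\subset K_1\cup\cdots\cup K_{n+1}=T^{n+1}K$, so $\delta$ corestricts to a simplicial map $K\to T^{n+1}K$. Finally, each $r_j\sim 1_K$, so by Proposition~\ref{STUFF}(2) (applied inductively over the $n+1$ coordinates) we get $(r_1,\dots,r_{n+1})\sim(1_K,\dots,1_K)=\Delta\colon K\to K^{n+1}$; that is, $I\circ\delta\sim\Delta$. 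Hence $\wscat K\leq n$, and combined with Theorem~\ref{INEQ} we conclude $\wscat K=n$.

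The step I expect to be the main obstacle is the very first one: producing, for each $j$, the contiguity $i_j\sim c_{v_0}$ as maps $L_j\to K$ with a \emph{single common} base vertex $v_0$, so that all $n+1$ replacements land in the \emph{same} fat wedge $T^{n+1}K$ (whose $j$-th piece $K_j$ uses the fixed $v_0$). Categoricity of $L_j$ only gives some $v_j$; one must either note that the choice of vertex in the definition of ``categorical'' is immaterial when $K$ is connected (a chain of edges from $v_j$ to $v_0$ yields $c_{v_j}\sim c_{v_0}$ through constant maps, each consecutive pair being contiguous), or build this into the setup. After that, the bookkeeping — verifying $\tilde\psi_j$ really restricts to the constant map on $L_j$, and that Proposition~\ref{STUFF}(2) indeed upgrades coordinatewise contiguity classes to a contiguity class of the tuple — is routine.
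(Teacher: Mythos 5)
Your proposal is correct and follows essentially the same route as the paper: normalize all the constant maps to a common base vertex using connectedness, apply the contiguity extension property with $\tilde\varphi=1_K$ to get maps $\tilde\psi_j\sim 1_K$ that are constant $v_0$ on $L_j$, assemble $\delta=(\tilde\psi_1,\dots,\tilde\psi_{n+1})$ into the fat wedge, and invoke Theorem~\ref{INEQ} for the reverse inequality. The "routine bookkeeping" you defer is exactly what the paper spells out: since $T^{n+1}K$ is not full in $K^{n+1}$, one notes that each simplex of $K$ lies entirely in a single $L_j$, so its $\delta$-image is a simplex of the single piece $K_j$; and the coordinatewise chains of contiguities are combined (as you indicate via Proposition~\ref{STUFF}) to give $I\circ\delta\sim\Delta$.
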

\begin{proof}Due to Theorem \ref{INEQ} we only have  to prove that $\wscat K \leq n$.
By hypothesis, each inclusion $I_j\colon L_j \to K$ is in the same contiguity class that some constant map, whose image is a vertex $v_j$. Since two such constant maps are contiguous if and only if the vertices $v_i,v_j$ lie on the same simplex (Lemma \ref{TWOPOINTS}), and $K$ is connected, we can suppose that all the vertices are equal, say to some base point $v_0$. Taking $i=I_j$, $\varphi=I_j$, $\psi=v_0$ and $\tilde \varphi=1_K$, the simplicial extension property gives maps $\tilde\psi_j\sim 1_K$ such that $\tilde\psi_j(L_j)=\{v_0\}$. Define the map
$$\delta\colon K \to T^{n+1}K, \quad \delta(v)=(\tilde\psi_1(v),\dots,\tilde\psi_{n+1}(v)).$$
It is well defined, because  each vertex $v$ is contained in some $L_j$, hence $\tilde\psi_j(v)=v_0$, meaning that the $j$-th coordinate of $\delta(v)$ is the base point. Moreover $\delta$ is a simplicial map. In fact,  for any simplex $\sigma\in K$ we have that $\delta(\sigma)$ is a simplex in $K^n$ (Proposition \ref{PRELIMPROD}). Moreover, $\sigma$ must be contained in some $L_j$, and $\delta(L_j)\subset K_j$. Then $\delta(\sigma)\in K_j\subset T^{n+1}K$.
Now, let
$$\tilde\psi_j =\varphi_j^1\sim_c\cdots \sim_c\varphi_j^m=1_K$$
be the sequence of contiguous maps connecting $\tilde\psi_j$ and
$1_K$ (clearly we may assume that the length $m$ does not depend
on $j$). Then  the maps
$$(\tilde\varphi^k_1,\dots,\tilde\varphi^k_{n+1})\colon K \to
K^{n+1}, \quad k=1,\dots,m,$$ define a sequence of contiguous maps
between $J\circ \delta=(\tilde\psi_1,\dots,\tilde\psi_{n+1})$ and
$\Delta=(1,\dots,1)$. Hence $J\circ \delta\sim \Delta$, which
means that $\wscat K\leq n$.
\end{proof}

As a corollary, some categorical covering of the complex $K$ in Figure \ref{MOTHER} fails to verify the hypothesis of the preceding theorem, because $\scat K=1$ but $\wscat K>2$.

\begin{example}

The subcomplex $L=K\setminus\sigma$,  of the complex $K$ in Figure
\ref{MOTHER}, where  $\sigma=\{a,b,c\}$, has not the contiguity
extension property. Let $v$ be the upper vertex of $K$ (see figure
\ref{MOTHER}). Since $L$ is strongly collapsible to $v$,   the
inclusion $\varphi=i\colon L \to K$ and the constant map
$\psi=c_v\colon L \to K$ lie in the same contiguity class. Fix
$\tilde\varphi=1_K\colon K \to K$ to be the identity. If there
exists $\tilde\psi\sim 1_K$ such that $\psi(L)=\{v\}$ there must
be some sequence $\tilde\psi\sim_c \varphi_1\sim_c\dots\sim_c 1_K$
of contiguous maps. However, since $a,b,c\in L$, the simplicial
map $\varphi_1$ must be constant, and we obtain that $1_K$ is
contiguous to a constant map, which is a contradiction because
$\scat K\neq 0$.

\begin{figure}[h!]
\begin{center}
\includegraphics[height=40mm]{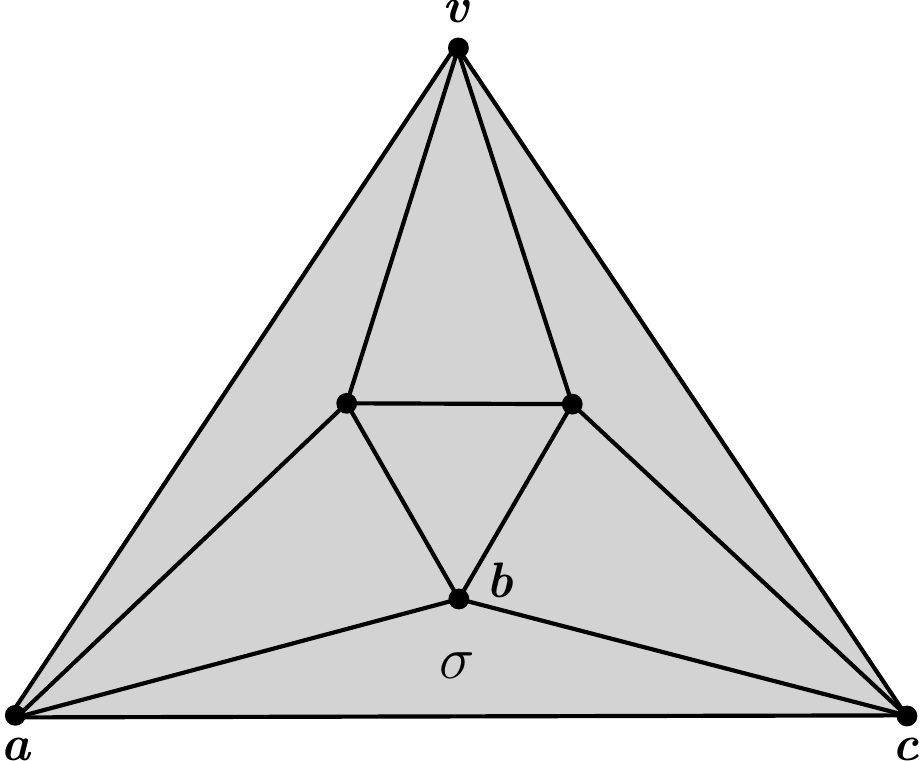}
\caption{A collapsible but not strongly collapsible complex $K$.
The subcomplex $L=K\setminus \sigma$ has not the contiguity
extension property. } \label{MOTHER}
\end{center}
\end{figure}

\end{example}

\begin{example}A simpler example was communicated to us by N. Scoville:
let $K$ be the $1$-dimensional complex in Figure \ref{TRIANG},
with vertices $a, b, c$ and let $\sigma$ be the $1$-simplex joining
the vertices $b$ and $c$. Then the subcomplex $L=K\setminus
\sigma$ has not the contiguity extension property.

\begin{figure}[h!]
\begin{center}
\includegraphics[height=25mm]{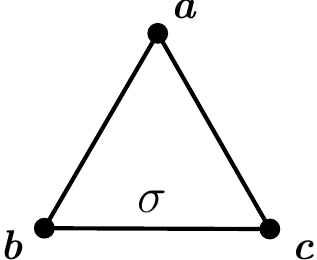}
\caption{The subcomplex $L=K\setminus \sigma$ has not the
contiguity extension property. } \label{TRIANG}
\end{center}
\end{figure}

\end{example}


\section{Graphs}\label{8}
\subsection{}
This section is focused on the study of the simplicial
LS-category in the one-dimensional case,
that is, on graphs. The well known graph-theoretical notion of
{\em arboricity} will play a central role in this study.
Basically, arboricity is based on the cardinality of minimal
decompositions of a graph into disjoint spanning forests, i.e.,
acyclic subgraphs, which are non-necessarily connected and cover
all the vertices.

The aim of this section is to prove that arboricity coincides (up to one) with
both simplicial and geometric simplicial categories.

\begin{remark}
In \cite{AS}, Aaronson and Scoville introduced a so-called discrete LS-category in the simplicial seting. They proved that, for the $1$-dimensional case, it is equivalent to arboricity.
\end{remark}

Let us start with some basic notions on graph theory. A general
reference is Harary's book \cite{HARARY1969}.

\begin{definition}
Let $G$ be a graph. A cycle in $G$ is an alternating sequence of
distinct vertices and edges, $v_0,e_1,v_1,\dots ,
v_{n-1},e_n,v_n$, where the incident vertices of each edge $e_i$
are $v_{i-1}$ and $v_i$ respectively, and such that $v_0=v_n$.
\end{definition}

Under a topological point of view, cycles are triangulations of
the circumference $S^1$.

\begin{definition}
A {\em forest}  is a graph without cycles, alternatively it can be
called acyclic graph. A {\em tree} is a connected forest.
\end{definition}

\begin{definition}
The {\em arboricity} of a graph $G$, denoted by $\Upsilon(G)$, is
the minimum number of edge-disjoint spanning forests into which
$G$ can be decomposed.
\end{definition}

Nash-Williams \cite{NW} determined the arboricity of a general graph:

\begin{theorem}{\cite[Th. 9.10]{HARARY1969}}
Let $G$ be a nontrivial graph and let $q_n$ be the maximum number
of edges in any subgraph of $G$ with $n$ vertices. Then
$$ \Upsilon(G)=\max_n\left \lceil  \frac{q_n}{n-1}  \right \rceil .$$
\end{theorem}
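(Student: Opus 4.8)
The plan is to derive the formula from the matroid covering theorem of Edmonds and Nash--Williams, applied to the cycle matroid of $G$; the easy half I would do by hand. For the lower bound, suppose $E(G)$ is partitioned into $k$ spanning forests $F_1,\dots,F_k$. Fix $n$ and let $H\subseteq G$ be a subgraph on $n$ vertices with $q_n$ edges. Each $F_i\cap E(H)$ is a forest on at most $n$ vertices, hence has at most $n-1$ edges; summing over $i$ gives $q_n\le k(n-1)$, so $k\ge q_n/(n-1)$, and since $k\in\mathbb{Z}$, $k\ge\lceil q_n/(n-1)\rceil$. As this holds for every $n$, we get $\Upsilon(G)\ge\max_n\lceil q_n/(n-1)\rceil$.

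For the upper bound I would pass to the cycle matroid $M(G)$: its ground set is $E(G)$, its independent sets are the forests of $G$, and its rank function is $r(A)=|V(G)|-c(A)$, where $c(A)$ is the number of connected components of the spanning subgraph $(V(G),A)$; equivalently $r(A)=\sum(|S_i|-1)$ over the nontrivial components $S_i$ of $(V(G),A)$. Since $G$ has no loops, $M(G)$ is loopless, and a decomposition of $G$ into $k$ spanning forests is exactly a covering of the ground set of $M(G)$ by $k$ independent sets, so $\Upsilon(G)$ is the covering number of $M(G)$. By the matroid covering theorem (a consequence of Edmonds' matroid union theorem, provable directly by a rank/exchange argument), the covering number of a loopless matroid $M$ with rank function $r$ equals $\max_{\emptyset\ne A\subseteq E(G)}\lceil |A|/r(A)\rceil$.

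It then remains to check that this matroid quantity equals $\max_n\lceil q_n/(n-1)\rceil$. For ``$\le$'', given $A\ne\emptyset$ with nontrivial components having vertex sets $S_1,\dots,S_t$ and edge sets $A_1,\dots,A_t$, we have $|A|=\sum_i|A_i|$, $r(A)=\sum_i(|S_i|-1)$, and $|A_i|\le|E(G[S_i])|\le q_{|S_i|}$; using the elementary inequality $\frac{\sum a_i}{\sum b_i}\le\max_i\frac{a_i}{b_i}$ for positive $b_i$,
$$\frac{|A|}{r(A)}=\frac{\sum_i|A_i|}{\sum_i(|S_i|-1)}\le\max_i\frac{q_{|S_i|}}{|S_i|-1}\le\max_n\frac{q_n}{n-1}.$$
Applying the nondecreasing map $x\mapsto\lceil x\rceil$ and taking the maximum over $A$, and using $\lceil\max_n y_n\rceil=\max_n\lceil y_n\rceil$, gives $\le\max_n\lceil q_n/(n-1)\rceil$. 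For ``$\ge$'', for each $n$ take $A=E(G[S])$ with $S$ the vertex set of a densest $n$-vertex subgraph; then $|A|=q_n$ and $r(A)=n-c\le n-1$, where $c$ is the number of components of $G[S]$, so $\lceil|A|/r(A)\rceil\ge\lceil q_n/(n-1)\rceil$. Chaining the three steps yields $\Upsilon(G)=\max_n\lceil q_n/(n-1)\rceil$.

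The main obstacle is the matroid covering theorem itself: if one wants a self-contained account rather than a citation, that is where the work lies, via a rank-augmentation argument essentially equivalent to Edmonds' matroid union theorem. Granting it, the only remaining subtlety is the bookkeeping in the last step --- the ratio-of-sums inequality, the interaction of $\max$ with $\lceil\cdot\rceil$, and ensuring that disconnected induced subgraphs are treated so that the denominator in the optimal ratio is exactly $n-1$.
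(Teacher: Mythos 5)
The paper does not prove this theorem at all: it is quoted as a known result, cited to Harary's book and ultimately to Nash--Williams, whose original argument is a direct graph-theoretic one. So there is no in-paper proof to match, and your route is a genuinely different (and standard modern) one: you get the easy inequality $\Upsilon(G)\geq\max_n\lceil q_n/(n-1)\rceil$ by the forest-counting argument, and you outsource the hard inequality to the Edmonds--Nash--Williams matroid covering theorem applied to the cycle matroid, then translate $\max_{\emptyset\neq A}\lceil |A|/r(A)\rceil$ into $\max_n\lceil q_n/(n-1)\rceil$. That translation is done correctly: you handle disconnected edge sets via the component decomposition of the rank, the ratio-of-sums bound, and the interchange of $\max$ with $\lceil\cdot\rceil$, and in the reverse direction you correctly note $r(A)=n-c\leq n-1$ for a densest $n$-vertex induced subgraph. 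The only caveats are minor bookkeeping you largely flag yourself: the maximum must run over $n\geq 2$ (so that $n-1>0$; this is implicit in ``nontrivial''), $G$ must be loopless for the cycle matroid to be loopless, and a covering by independent sets must be converted to a partition into edge-disjoint spanning forests (trivial, since subsets of forests are forests and one may add isolated vertices). What your approach buys is brevity and generality, at the price of resting on the matroid covering theorem, which for graphic matroids is essentially equivalent in depth to the arboricity theorem itself; a self-contained account would have to prove it (e.g.\ via matroid union), whereas Nash--Williams' original proof avoids matroids entirely. As a citation-based argument it is sound and no more circular than the paper's own use of the result.
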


\begin{example} For the particular case of complete graphs (see Figure \ref{K5}) it follows the following formula:
$$\Upsilon(K_{2n})=n=\Upsilon(K_{2n-1}).$$
\end{example}


\subsection{}Now we prove some   results which will be used later
in this section.

\begin{remark}
Every standard elementary collapse in a graph is the deletion of a
so-called {\em leaf vertex} $v$ and the unique edge $vv'$
containing $v$. Thus $v$ is dominated by $v'$ and hence, in
graphs, every elementary collapse is an elementary strong
collapse.

\end{remark}

\begin{lemma}\label{CYCLE}
Let $G$ be a connected graph and let $L\subset G$ be a subgraph
containing at least one cycle $C$. If $\varphi\colon L \to G$ is a
simplicial map contiguous to the inclusion $i_L\colon L\to G$,
then $\varphi(L)$ contains the cycle $C$. Moreover
$\varphi(L)\subset L$ and $\varphi_{\vert C}$ is the inclusion
$i_C\colon C \to G$.
\end{lemma}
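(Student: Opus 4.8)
The plan is to analyze contiguity edge by edge. Recall that $\varphi \sim_c i_L$ means that for every simplex $\sigma$ of $L$, the set $\varphi(\sigma) \cup i_L(\sigma) = \varphi(\sigma) \cup \sigma$ is a simplex of $G$. Since $G$ is a graph, a simplex has at most two vertices, so for any edge $vw$ of $L$ the set $\{\varphi(v), \varphi(w), v, w\}$ must span a simplex of $G$, i.e. have at most two vertices. This forces, for each vertex $v$ lying on an edge, that $\varphi(v) \in \{v\} \cup \{\text{neighbours of } v\}$, and moreover the choices on adjacent vertices are heavily constrained. I would first extract this local description as a preliminary observation.

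Next I would apply this to a vertex $v_i$ on the cycle $C = v_0, e_1, v_1, \dots, v_n = v_0$. The vertex $v_i$ lies on two edges of $C$, namely $e_i = v_{i-1}v_i$ and $e_{i+1} = v_i v_{i+1}$ (indices mod $n$). Applying the contiguity condition to $e_i$ gives $\{\varphi(v_{i-1}), \varphi(v_i), v_{i-1}, v_i\}$ is a simplex, and similarly for $e_{i+1}$. The key step is to rule out the possibility that $\varphi$ "collapses" the cycle: suppose for contradiction that $\varphi(v_j) \neq v_j$ for some $j$ on $C$. Then $\varphi(v_j)$ is a neighbour of $v_j$, and from the two edge conditions at $v_j$ one deduces that $\varphi(v_j) \in \{v_{j-1}, v_{j+1}\}$ (since $\{\varphi(v_{j-1}),\varphi(v_j),v_{j-1},v_j\}$ having $\le 2$ vertices together with $v_{j-1}\ne v_j$ forces $\varphi(v_j)=v_{j-1}$ or $\varphi(v_j)=v_j$, and symmetrically with $v_{j+1}$). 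Say $\varphi(v_j) = v_{j-1}$; then the edge condition on $e_j$ becomes $\{v_{j-1}, \varphi(v_{j-1})\}$ with $v_{j-1}$ one vertex, which is automatic, but the edge condition on $e_{j+1}$ forces $\{v_{j-1}, \varphi(v_{j+1}), v_j, v_{j+1}\}$ to have at most two vertices, which is impossible unless the cycle degenerates. Walking this argument around the cycle — each vertex whose image is "shifted" forces its neighbour to be shifted in the same direction, and a length-$n$ cycle cannot admit a nontrivial rotation because $v_{j-1}v_{j+1}$ is not an edge for $n \ge 3$ — yields the contradiction. Hence $\varphi(v_j) = v_j$ for all $j$, i.e. $\varphi|_C = i_C$ and $C \subset \varphi(L)$.

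For the remaining claim $\varphi(L) \subset L$, I would take an arbitrary edge $vw$ of $L$ and show $\varphi(v)\varphi(w)$ (or the single vertex $\varphi(v)$, if $\varphi(v)=\varphi(w)$) lies in $L$. This does \emph{not} follow from contiguity with $G$ alone in general, so here is where I expect the main obstacle and where the hypothesis that $C \subset L$ and $L$ is connected must be used: having pinned $\varphi$ to the identity on $C$, I would propagate outward. Pick an edge $vw$ of $L$ with, say, $v$ already known to satisfy $\varphi(v)=v$ (possible by connectedness of $L$, starting from $C$, proceeding along a path). The contiguity condition on $vw$ gives $\{v, \varphi(w), w\}$ has at most two vertices, so $\varphi(w) \in \{v, w\}$; in either case $\varphi(w)$ is a vertex of the edge $vw \in L$, hence lies in $L$, and moreover the edge $\varphi(v)\varphi(w) \subset \{v,w\}$ is a face of $vw$, so it is an edge (or vertex) of $L$. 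Iterating along spanning paths of the connected subgraph $L$ shows $\varphi(u) \in L$ for every vertex $u$ and $\varphi(\tau) \in L$ for every simplex $\tau$ of $L$, i.e. $\varphi(L) \subset L$. The one subtlety to handle carefully is that propagation requires $L$ connected so that every vertex is joined to $C$ by a path inside $L$; I would state this explicitly at the start. Combining the three conclusions finishes the proof.
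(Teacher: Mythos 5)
Your handling of the cycle is essentially the paper's argument and is correct: contiguity on an edge $vw$ of $L$ forces $\{\varphi(v),\varphi(w),v,w\}$ to be a simplex of the graph $G$, hence to have at most two vertices, and combining the two edge conditions at a vertex $v_j$ of $C$ rules out $\varphi(v_j)\neq v_j$ (the paper phrases the same thing per edge, as $\varphi(e)\subseteq e$, and gets the contradiction from the adjacent edge). One small slip there: the parenthetical ``a length-$n$ cycle cannot admit a nontrivial rotation because $v_{j-1}v_{j+1}$ is not an edge for $n\geq 3$'' is false for $n=3$, where $v_{j-1}v_{j+1}$ is an edge of the triangle; but the remark is redundant, since the contradiction you already derived --- the three distinct vertices $v_{j-1},v_j,v_{j+1}$ would have to span a simplex of a $1$-dimensional complex --- does not depend on adjacency at all.

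The genuine gap is in your argument for $\varphi(L)\subseteq L$. You assert that this ``does not follow from contiguity with $G$ alone'' and instead propagate the identity outward from $C$ along paths inside $L$, which obliges you to add the hypothesis that $L$ is connected. The lemma does not assume $L$ connected (and in its application, Theorem \ref{CATFOR}, the subgraph may well be disconnected), so your propagation never reaches the components of $L$ not containing $C$, and for those components you prove nothing. Moreover, the obstacle is illusory: the observation you recorded at the outset already suffices. For \emph{any} edge $vw$ of $L$, the simplex $\{\varphi(v),\varphi(w),v,w\}$ of $G$ contains the two distinct vertices $v,w$, hence equals $\{v,w\}$; thus $\varphi(v),\varphi(w)\in\{v,w\}$, so $\varphi(vw)$ is a face of $vw$ and lies in $L$. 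This gives $\varphi(L)\subseteq L$ edge by edge, with no connectedness assumption and no use of the cycle --- which is exactly the first step of the paper's proof. Your weakened restatement ``$\varphi(v)\in\{v\}\cup\{\text{neighbours of }v\}$'' is what obscured this: the condition actually pins $\varphi(v)$ to the endpoints of each edge of $L$ through $v$. (The only true caveat, which the paper also passes over silently, concerns isolated vertices of $L$, where contiguity only forces $\varphi(u)$ to be $u$ or a neighbour of $u$ in $G$; these play no role in the cycle claim or in the application.)
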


\begin{proof}
Every edge $e$ in $L$ satisfies that $\varphi(e)\cup e$ is a
simplex in $G$ (for graphs, ``simplex'' means a vertex or an
edge), so, equivalently $\varphi(e)\subseteq e$; therefore, either
$\varphi(e)=e$ or $\varphi(e)$ is one of the extreme vertices of
$e$. This implies that $\varphi(L)\subseteq L$.

Let $C$ be a cycle contained in $L$ and let us consider the
restrictions to $C$ of $\varphi$ and the inclusion $i_L$, denoted
by $\varphi_{\vert _C}$ and $i_C$ respectively, which are also
contiguous as consequence of the fact that the composition of
contiguous maps is contiguous (see \cite[\textsection
3.5]{SPANIER1966}). So, if $\varphi$ maps every edge of the cycle
$C$ onto itself then $\varphi(C)=C$ and hence $\varphi(L)$
contains the cycle $C$. Otherwise,  there exists an edge $e_1$ in
$C$ such that $\varphi(e_1)\neq e_1$; we can suppose without loss
of generality that $\varphi(e_1)= v_0$. Now let us consider the
edge $e_2$ which is adjacent to $e_1$ in $v_1\neq v_0$. Since the
map $\varphi$ is simplicial, we have $\varphi(v_1)=v_0$, which is
a contradiction with $\varphi(e_2)\subseteq e_2$. Finally, we
conclude that all the edges in $C$ remain fixed by $\varphi$ so
$\varphi(L)$ contains at least one cycle.
\end{proof}

\begin{remark}
Let $P\subset G$ be a a path in a graph, such that there is a
simplicial map $\varphi\colon P \to G$ contiguous to the inclusion
$i_P\colon P\to G$. By the same argument of the proof above,
any edge $e$ contained in   $P$ satisfies that either
$\varphi(e)=e$ or $\varphi(e)=u$, where $u$ is one of the extreme
vertices of $e$ with degree $1$, i.e. $u$ is a so called leaf vertex of $P$.
Thus, we conclude that the only possible reductions induced by a
simplicial map contiguous to the inclusion are given by standard
collapses.
\end{remark}

The next result establishes the equivalence for graphs between the
notions of categorical subcomplex and acyclic subgraph.

\begin{theorem}\label{CATFOR}
Let $G$ be a connected graph and let $L\subseteq G$ be a subgraph.
Then $L$ is categorical in $G$ if and only if $L$ is a forest.
\end{theorem}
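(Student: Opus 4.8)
The plan is to prove both implications, with the forward direction (categorical $\Rightarrow$ forest) being the substantive one. First I would dispose of the easy converse: if $L$ is a forest, then it is a disjoint union of trees, and each tree is strongly collapsible in itself by repeatedly deleting leaf vertices (every leaf is dominated by its unique neighbour). Handling disconnectedness requires a small argument: since $G$ is connected, pick a vertex $v_0$ in one tree-component; each other component can be pushed into $v_0$ because any spanning tree of $G$ provides a path, and along such a path the constant maps at adjacent vertices are contiguous (Lemma~\ref{TWOPOINTS}/``two points on the same simplex''). Concatenating, the inclusion $i_L\colon L\to G$ is in the contiguity class of the constant map $c_{v_0}$, so $L$ is categorical. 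More simply, one strongly collapses each component to a point and then slides all these points together inside $G$.

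For the forward direction, suppose $L$ is categorical, so $i_L\sim c_v$ for some vertex $v$ via a chain $i_L=\varphi_1\sim_c\varphi_2\sim_c\cdots\sim_c\varphi_m=c_v$. Assume for contradiction that $L$ contains a cycle $C$. The key tool is Lemma~\ref{CYCLE}: since $\varphi_2$ is contiguous to the inclusion $i_L=\varphi_1$, we have $\varphi_2(L)\subseteq L$, $\varphi_2(L)$ still contains the cycle $C$, and $\varphi_2$ restricted to $C$ is the inclusion. The plan is to \emph{iterate} this: I would argue by induction on $i$ that each $\varphi_i$ maps $L$ into $L$ and restricts to the identity on $C$. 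The inductive step is exactly Lemma~\ref{CYCLE} applied to the pair $\varphi_i\sim_c\varphi_{i+1}$, once one knows that $\varphi_i$ behaves like (a retraction onto a subgraph containing) $C$; the hypothesis $\varphi_i|_C = i_C$ and $\varphi_i(L)\subseteq L$ is what is needed to rerun the argument of that lemma with $\varphi_i$ in place of $i_L$. Carried to $i=m$, this forces $\varphi_m|_C$ to be the inclusion of $C$, contradicting $\varphi_m=c_v$ being constant (a cycle has at least three vertices, so its inclusion is not constant).

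The main obstacle I anticipate is the bookkeeping in this induction: Lemma~\ref{CYCLE} as stated compares a map to \emph{the inclusion} $i_L$, whereas in the chain the intermediate $\varphi_i$ are not inclusions. So I would either (a) restate the inductive hypothesis carefully as ``$\varphi_i(L)\subseteq L$ and $\varphi_i|_C=i_C$'' and observe that the proof of Lemma~\ref{CYCLE} only used these two properties of $i_L$, not that it was literally the inclusion, or (b) phrase a slightly more general lemma to that effect and invoke it. Either way, the crux is that contiguity with $\varphi_i$ still forces each edge $e\in L$ to satisfy $\varphi_{i+1}(e)\subseteq e$ (since $\varphi_i(e)\subseteq e$ already sits inside $e$, the union $\varphi_i(e)\cup\varphi_{i+1}(e)$ being a simplex of $G$ forces $\varphi_{i+1}(e)$ into $e$ as well), and that the ``edges of $C$ are fixed'' argument then runs verbatim. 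With that observation the whole chain preserves $C$ pointwise, the contradiction is immediate, and hence $L$ is acyclic, i.e.\ a forest.
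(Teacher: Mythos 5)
Your converse direction (forest $\Rightarrow$ categorical) is fine and is essentially the paper's argument: collapse each tree of $L$ to a vertex and use connectivity of $G$ to slide the resulting constant maps to a common vertex. Your forward direction follows the same strategy as the paper (iterate Lemma~\ref{CYCLE} along the contiguity chain until it collides with the constant map), but the explicit justification you give for the inductive step is wrong. From $\varphi_i(e)\subseteq e$ and $\varphi_i\sim_c\varphi_{i+1}$ you cannot conclude $\varphi_{i+1}(e)\subseteq e$: if $\varphi_i(e)=\{u\}$ is a single endpoint of $e$, then $\varphi_i(e)\cup\varphi_{i+1}(e)$ is a simplex of $G$ whenever $\varphi_{i+1}(e)$ is $u$, any vertex adjacent to $u$, or any edge of $G$ through $u$, so $\varphi_{i+1}(e)$ may leave $e$ entirely. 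For the same reason the clause ``$\varphi_i(L)\subseteq L$'' of your inductive hypothesis is not maintainable beyond the first step: take $L$ a single edge $uv$, let $\varphi_2$ be constant at $u$ and $\varphi_3$ constant at a neighbour $w\notin L$; then $\varphi_1\sim_c\varphi_2\sim_c\varphi_3$ but $\varphi_3(L)\not\subseteq L$. So option (a) as you state it (``the proof of Lemma~\ref{CYCLE} only used these two properties'') is not accurate — that proof uses that the comparison map sends each edge onto the \emph{whole} edge, not merely into it.

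The repair is exactly the paper's move: discard $L$ and restrict everything to the cycle $C$. The only inductive hypothesis you need is $\varphi_i|_C=i_C$; then for every edge $e$ of $C$ you have $\varphi_i(e)=e$ on the nose, so contiguity really does force $\varphi_{i+1}(e)\subseteq e$, and since restrictions of contiguous maps to a subcomplex are again contiguous, Lemma~\ref{CYCLE} applies verbatim with $C$ itself in the role of the subgraph, yielding $\varphi_{i+1}|_C=i_C$. Iterating gives $\varphi_m|_C=i_C$, contradicting $\varphi_m=c_v$ (a cycle has at least three vertices). With the ``$L$ into $L$'' bookkeeping deleted and the induction run on $C$ alone, your argument becomes correct and coincides with the paper's proof; as written, the inductive step would fail.
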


\begin{proof}
Let us suppose that there exists a  categorical but non-acyclic
subgraph $L\subset G$.  By definition, there exists a vertex $v\in
G$ such that the inclusion $i=i_L\colon L \to G$ and the constant
map $c=c_v\colon L \to G$ are in the same contiguity class, which
gives a sequence $i_L=\varphi_1\sim_c\cdots\sim_c\varphi_m=c_v$ of
directly contiguous maps $\varphi_i\colon L \to G$.

Now, since $i_L\sim_c \varphi_2$ and $L$ contains
 at least one cycle $C$, by taking into account the previous Lemma we conclude that $\varphi_2(L)$ contains at least the cycle $C$. Moreover, $(\varphi_2)_{\vert C}=i_C$ is the inclusion $i_C\colon C\subset G$. Now, $\varphi_2\sim_c \varphi_3$  implies, by composing with the inclusion $\varphi_2(L)\subset L$,  that $i_C=(\varphi_2)_{\vert C}\sim_c (\varphi_3)_{\vert C}$.  That means, by applying the lemma again, that $\varphi_3(C)$ is the cycle $C$, and by repeating the argument we shall arrive to a cycle that can be deformed into a point, which is impossible.
Hence we conclude that $L$ is an acyclic subgraph of $G$, that is,
$L$ is a forest.

Conversely, let us assume that $L$ is a forest, so $L$ is a
disjoint union of trees $T_i$ with $i=1,\dots,n$. It is clear
that each inclusion map $T_i\subset G$ is in the same contiguity
class as the constant map
 sending the tree $T_i$ onto one of its vertices $v_i$.
Since $G$ is connected, there is at least one path in $G$ joining
every vertex $v_i$ with a given vertex $v_0$. Then the inclusion
of $L$ is in the   same contiguity class as the constant map $v$.
Hence $L$ is a categorical subcomplex in $G$.
\end{proof}

\begin{lemma}\label{Arbol}
Let $G$ be a connected graph. For every covering of $G$ by
forests there exists a covering of $G$ by trees with
the same number of elements.
\end{lemma}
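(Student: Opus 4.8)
The plan is to enlarge each forest in the given covering to a spanning tree of $G$, without changing the number of pieces. So suppose $G=F_1\cup\cdots\cup F_k$ with each $F_i$ a forest. The main claim to establish is: \emph{every forest $F\subseteq G$ is contained in a spanning tree $T$ of $G$.} Granting this, pick for each $i$ a spanning tree $T_i$ of $G$ with $F_i\subseteq T_i$. Since $F_i\subseteq T_i\subseteq G$ for every $i$, we get $G=F_1\cup\cdots\cup F_k\subseteq T_1\cup\cdots\cup T_k\subseteq G$, so $\{T_1,\ldots,T_k\}$ is a covering of $G$ by trees with the same number of elements, which is exactly the conclusion of the lemma.

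To prove the claim, I would build $T$ from $F$ by adjoining edges of $G$ one at a time. First enlarge $F$ to the subgraph $F^{+}$ obtained by adding, as isolated vertices, all vertices of $G$ not already in $F$; then $F^{+}$ is still a forest, its vertex set is $V(G)$, and its connected components form a partition of $V(G)$ into blocks. As long as there is more than one block, connectedness of $G$ supplies an edge $e=uw$ of $G$ whose endpoints lie in two distinct blocks: choose vertices in two different blocks and a path joining them in $G$, and take the first edge along that path whose two endpoints lie in different blocks. Adjoin $e$ to the current subgraph; this merges the two blocks containing $u$ and $w$ into one, and it creates no cycle, because an edge joining two distinct components of an acyclic graph lies on no cycle (a cycle through $e=uw$ would yield a $u$--$w$ path avoiding $e$, contradicting that $u,w$ are in distinct components). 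Each such step decreases the number of blocks by one, so after finitely many steps only one block remains. The resulting subgraph $T\subseteq G$ is then connected, has vertex set $V(G)$, is acyclic, and contains $F$; that is, $T$ is a spanning tree of $G$ with $F\subseteq T$, as required.

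The argument is entirely elementary; the only point deserving a little care is the choice, at each stage, of an edge of $G$ running between two distinct blocks — this is where connectedness of $G$ enters — together with the routine verification that adjoining such an edge preserves acyclicity and decreases the number of blocks. No input about contiguity or simplicial maps is needed: this is a purely graph-theoretic normalization that will afterwards be combined with \thmref{CATFOR} in the identification of $\scat G$ with arboricity.
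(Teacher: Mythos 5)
Your proof is correct and follows essentially the same route as the paper: both arguments extend each forest $F_i$ to a tree containing it by using connectedness of $G$ to link its components without creating cycles, so that the resulting trees still cover $G$. Your edge-by-edge version (taking the first edge of a connecting path that crosses between components, yielding a spanning tree) is a slightly more careful rendering of the paper's ``add a path and discard cycle-creating edges'' step, but the underlying idea is identical.
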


\begin{proof}
Let $F_1,\dots,F_k$ be a covering of $G$ by forests. Since $G$ is
connected, it follows that given two trees $T$ and $T'$ in a
forest $F_i$ and any vertices $v\in T$ and $v'\in T'$, they can be
linked by means of a path in $G$. By adding such a path to $F_i$,
and removing, if necessary, the edges of the path which create
cycles and are not contained in $F_i$, we link the trees $T$ and
$T'$ of $F_i$ (see Figure \ref{BOSQUE}).

\begin{figure}[h]
\begin{center}
\includegraphics[width=0.45\textwidth]{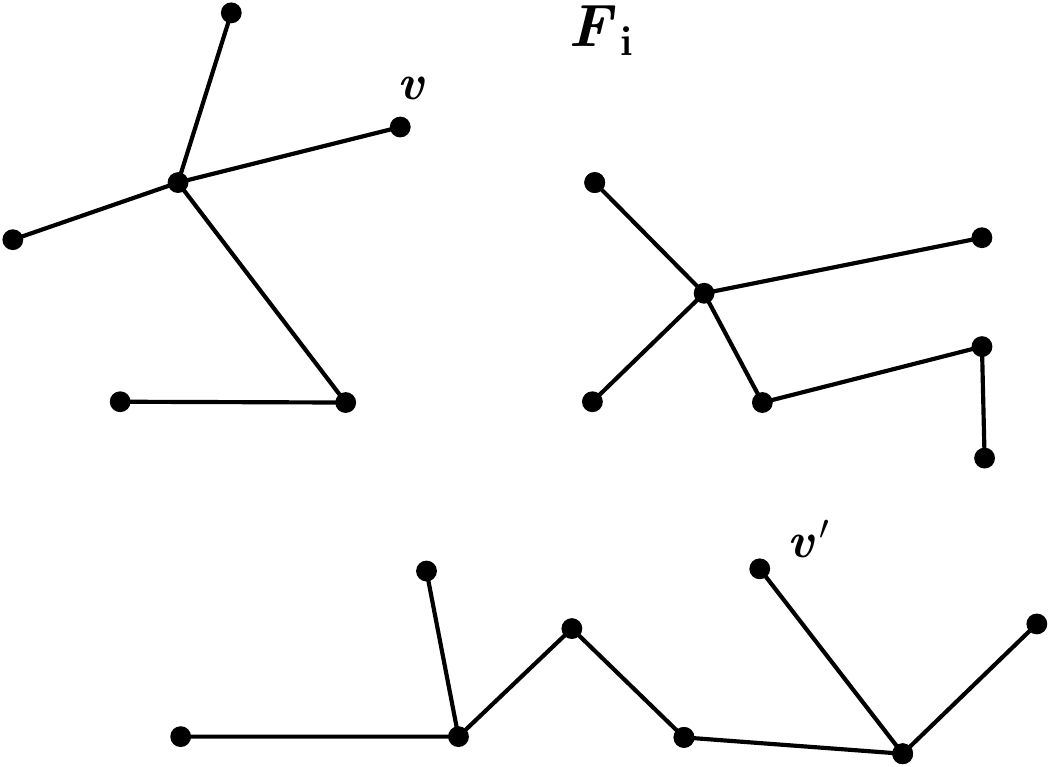}\hspace*{1cm}
\includegraphics[width=0.45\textwidth]{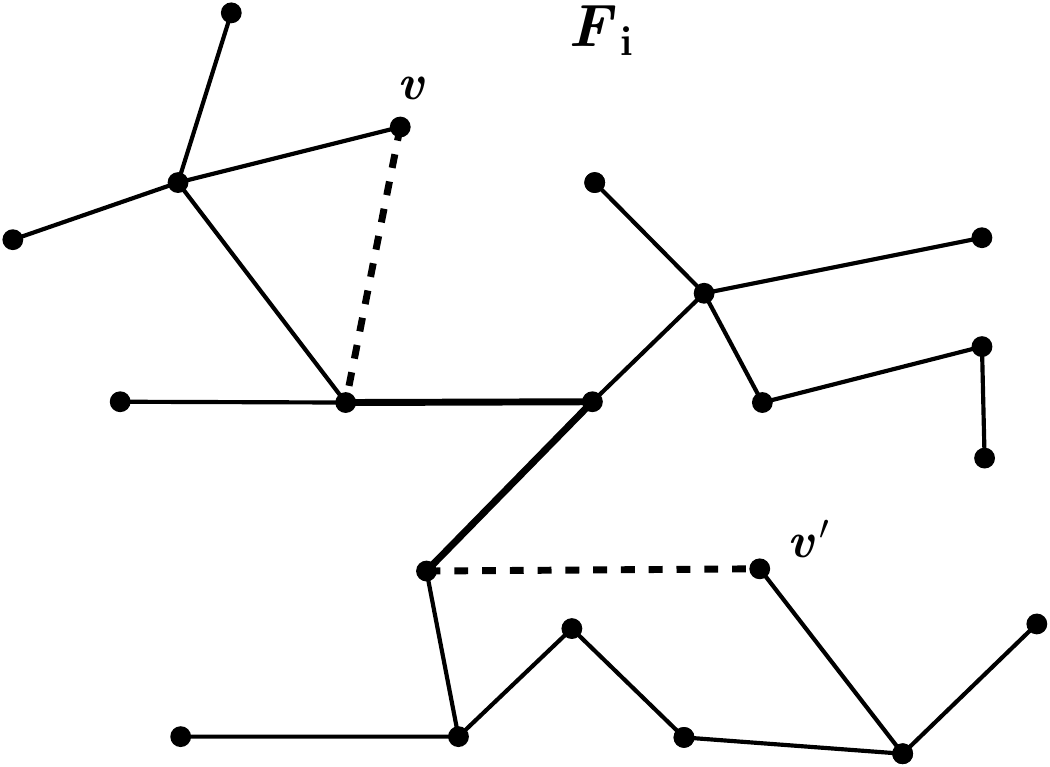}
\end{center}
\caption{Converting a forest into a tree.}\label{BOSQUE}
\end{figure}

Repeating the argument for the remaining trees, we obtain one tree
$T_i$ containing $F_i$. Finally, carrying out the same procedure
on every forest of $G$, we obtain a covering of $G$ by trees
$T_1,\dots,T_k$.
\end{proof}


\subsection{}We now state the main results of this section.

\begin{theorem}\label{ARBCAT}
Let $G$ be a connected graph. Then $\gscat G=\Upsilon(G)-1$.
\end{theorem}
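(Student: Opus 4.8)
The plan is to prove the two inequalities $\gscat G \leq \Upsilon(G)-1$ and $\Upsilon(G)-1 \leq \gscat G$ separately, using \thmref{CATFOR} (categorical subcomplexes of a graph are exactly forests) together with \lemref{Arbol} (any cover by forests can be replaced by a cover by trees with the same cardinality) and the definition of arboricity as a minimal decomposition into edge-disjoint spanning forests.

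For the inequality $\gscat G \leq \Upsilon(G)-1$, I would start from an edge-disjoint decomposition $G = F_1 \cup \cdots \cup F_k$ into spanning forests realizing $k = \Upsilon(G)$. Each $F_i$ is a forest, hence by \thmref{CATFOR} it is categorical in $G$; but for $\gscat$ we need each $F_i$ to be strongly collapsible \emph{in itself}. Since a forest is a disjoint union of trees and each tree is strongly collapsible (repeatedly delete leaf vertices, which are dominated), each $F_i$ is strongly collapsible in itself, so $\{F_1,\dots,F_k\}$ is a geometrically categorical cover of $G$ by $k$ subcomplexes, giving $\gscat G \leq k-1 = \Upsilon(G)-1$.

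For the reverse inequality $\Upsilon(G) \leq \gscat G + 1$, suppose $\gscat G = n$, so there is a cover $U_0,\dots,U_n$ of $G$ by subcomplexes each strongly collapsible in itself. By \propref{MAX} (valid here since $\scat \leq \gscat$ arguments carry over, or directly) we may assume each $U_j$ is a union of maximal simplices, i.e.\ of edges and isolated vertices, and that each edge of $G$ lies in exactly one $U_j$; in particular the $U_j$ are edge-disjoint and cover all edges. Each $U_j$, being strongly collapsible, is in particular a forest (a strongly collapsible graph has no cycles, since a cycle has no dominated vertex and cannot be a proper subcomplex that survives). Now the issue is that the $U_j$ need not be \emph{spanning}: they may miss some vertices. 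But that is harmless — we may simply add the missing vertices of $G$ to $U_0$ as isolated vertices; this keeps $U_0$ a forest and does not affect edge-disjointness. Thus $U_0,\dots,U_n$ becomes an edge-disjoint decomposition of $G$ into $n+1$ spanning forests, whence $\Upsilon(G) \leq n+1 = \gscat G + 1$.

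The main obstacle, and the place requiring the most care, is the passage in the second inequality from ``strongly collapsible in itself'' to ``is a forest'': one must verify that a connected graph which is strongly collapsible is a tree. This follows because \lemref{CYCLE} (applied with $L = G$, $\varphi$ running through the contiguity sequence witnessing strong collapsibility, i.e.\ identity contiguous to a constant) forbids any cycle — a cycle would be preserved by each $\varphi_i$ and hence could not be collapsed to a point. Conversely I should double-check the easy direction that every tree is strongly collapsible, which is just the leaf-vertex observation from the \remref{} preceding \lemref{CYCLE}, and that \propref{MAX}'s normalization is legitimate for $\gscat$ (each sub-union of edges of a forest is again a forest, hence still strongly collapsible in itself, so the reduction preserves geometric categoricity). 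Once these points are in place the proof is a direct comparison of the two combinatorial definitions.
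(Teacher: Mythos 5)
Your second inequality ($\Upsilon(G)\leq \gscat G+1$) is essentially the paper's argument and is fine: each $U_j$, being categorical in itself, is a forest by \thmref{CATFOR} (in fact a tree, since strong collapses cannot merge components), and one gets an edge-disjoint spanning decomposition by deleting repeated edges and adding the missing vertices as isolated vertices — note that for this step you only need the modified pieces to be forests, so your parenthetical worry about preserving ``strongly collapsible in itself'' under the \propref{MAX}-type pruning is moot (and in fact that claim is false in general, see below).

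The genuine gap is in the first inequality. You assert that each spanning forest $F_i$ is strongly collapsible \emph{in itself} because it is a disjoint union of trees and each tree is strongly collapsible. That does not follow: strong collapses remove open stars of dominated vertices, so they never merge or delete connected components (an isolated vertex is not dominated by anything), and hence a forest with two or more components cannot be strongly collapsed to a single vertex. Thus a disconnected spanning forest is categorical in $G$ (which suffices for $\scat$) but is \emph{not} admissible for $\gscat$. This is exactly why the paper interposes \lemref{Arbol}: since $G$ is connected, a covering by $k+1$ forests can be converted into a covering by $k+1$ \emph{trees} (join the trees of each forest by paths in $G$, discarding edges that would create cycles), and trees are strongly collapsible, giving $\gscat G\leq k$. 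You cite \lemref{Arbol} in your opening plan but never use it where it is actually needed; replacing your false claim by an appeal to that lemma repairs the proof and makes it coincide with the paper's.
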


\begin{proof}
Let us suppose that $\Upsilon(G)=k+1$, so by definition there
exists a covering of $G$ with $k+1$ edge-disjoint spanning
forests. By means of  Lemma \ref{Arbol} we can construct a
covering of $G$ with $k+1$ trees. Since the trees are strongly
collapsible   we conclude that $\gscat G\leq k$.

Conversely, let us assume that $\gscat G=k$. It means that there
is a covering of $G$ with $k+1$ strongly collapsible subsomplexes.
By Theorem \ref{CATFOR}, these complexes are trees $T_0,\dots,T_k$.
Starting from a tree $T_i$, we can obtain a spanning forest $F_i$
by adding all the isolated vertices which are not covered by
$T_i$. Next, if an edge is contained in several forests, in order
to obtain a covering by edge-disjoint forests, we remove it from
all these forests but one. Hence $\Upsilon(G)\leq k+1$.
\end{proof}

\begin{corollary}\label{ABORCAT}For any graph $G$, we have $\scat G=\gscat G=\Upsilon(G)-1$.
\end{corollary}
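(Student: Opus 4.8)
The statement to prove is $\scat G = \gscat G = \Upsilon(G) - 1$ for any graph $G$. The key point is that the inequality $\scat G \leq \gscat G$ holds in general (it was noted in the excerpt, subsection \ref{GEOMCAT}), so the whole corollary will follow once I show that $\gscat G \leq \scat G$ as well, which combined with \thmref{ARBCAT} gives $\scat G = \gscat G = \Upsilon(G)-1$. So the plan reduces to proving the reverse inequality $\gscat G \leq \scat G$ for graphs.

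First I would handle the connected case. Suppose $G$ is connected and $\scat G = n$, so there is a categorical covering $U_0, \dots, U_n$ of $G$. By \thmref{CATFOR}, each $U_j$ is a forest (being categorical in the connected graph $G$). But every forest is strongly collapsible \emph{in itself}: a finite forest has a leaf vertex, and removing a leaf vertex is an elementary strong collapse (as observed in the remark before \lemref{CYCLE}), so by induction each $U_j$ collapses strongly to one of its vertices. Hence the $U_j$ witness $\gscat G \leq n = \scat G$. Together with $\scat G \leq \gscat G$ this gives $\scat G = \gscat G$, and \thmref{ARBCAT} closes the chain.

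For a \emph{disconnected} graph $G$, I would reduce to the connected case componentwise. Write $G = G_1 \sqcup \cdots \sqcup G_r$ for its connected components. A categorical subcomplex, a strongly collapsible subcomplex, and a spanning forest all decompose along components (a categorical subcomplex of $G$ must, within each component, be a forest by the same cycle argument, since a cycle lies in a single component and the constant-map deformation would deform it to a point), so all three quantities $\scat$, $\gscat$, $\Upsilon$ behave additively-via-max on the covering count: one checks that $\scat G + 1 = \max_i (\scat G_i + 1)$ is false in general — rather the correct bookkeeping is that a covering of $G$ by $m$ categorical (resp. strongly collapsible, resp. spanning forest) pieces restricts to such a covering of each $G_i$, and conversely coverings of the $G_i$ of sizes $m_i$ assemble to a covering of $G$ of size $\max_i m_i$. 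Hence $\scat G = \max_i \scat G_i$, $\gscat G = \max_i \gscat G_i$, $\Upsilon(G) = \max_i \Upsilon(G_i)$, and applying \thmref{ARBCAT} and the connected case to each $G_i$ yields the general formula. (If the paper's convention restricts \thmref{ARBCAT} and the relevant definitions to connected graphs, this last paragraph can simply be omitted and the corollary stated for connected $G$.)

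\textbf{Main obstacle.} The only nontrivial input is the inequality $\gscat G \leq \scat G$, and its proof rests entirely on \thmref{CATFOR}: \emph{categorical subcomplexes of a connected graph are exactly forests}, hence are strongly collapsible in themselves. Once that structural fact is in hand the corollary is essentially a bookkeeping exercise, so I expect no real difficulty — the substance was already done in \thmref{ARBCAT} and \thmref{CATFOR}.
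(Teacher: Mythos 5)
Your reduction to the inequality $\gscat G \leq \scat G$ is the right idea and is essentially what the paper does, but the key step as you state it is false: a categorical subcomplex of a connected graph is a \emph{forest} (\thmref{CATFOR}), and a forest with more than one component is \emph{not} strongly collapsible in itself. Elementary strong collapses (removal of a dominated vertex) never merge components, and an isolated vertex is not dominated by anything, so a disconnected forest has a discrete set of at least two points as its core; equivalently, the identity of a disconnected complex is never in the contiguity class of a constant map, since along a contiguity sequence the image of a vertex moves along edges and hence stays in its component. So "each $U_j$ collapses strongly to one of its vertices" fails exactly when $U_j$ is disconnected, which the paper explicitly allows for categorical subcomplexes. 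The repair is \lemref{Arbol}: using connectivity of $G$, a covering by forests can be converted into a covering by \emph{trees} with the same number of elements, and trees are strongly collapsible; this is precisely how the paper passes from categorical coverings to $\gscat$ in \thmref{ARBCAT}, and the proof of \corref{ABORCAT} is meant to be the same argument (via \lemref{CYCLE} and \thmref{CATFOR}) run on each component.

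Your componentwise bookkeeping for disconnected $G$ is also wrong as written. A categorical subcomplex of $G$ must lie entirely in one connected component: the contiguity sequence from the inclusion to the constant map $c_v$ moves each vertex along edges, so every piece of the subcomplex must sit in the component of $v$. Consequently one cannot "assemble" coverings of the $G_i$ into a covering of $G$ of size $\max_i m_i$, and $\scat$ and $\gscat$ are additive over components (in the normalized count $\scat G+1=\sum_i(\scat G_i+1)$), whereas $\Upsilon(G)=\max_i\Upsilon(G_i)$, since a spanning forest may have edges in every component. So the formulas $\scat G=\max_i\scat G_i$ and $\gscat G=\max_i\gscat G_i$ are incorrect; your parenthetical fallback --- restricting to connected $G$, which is the setting of \thmref{CATFOR}, \lemref{Arbol} and \thmref{ARBCAT} --- is the sound reading, and the connected case is where the real content lies.
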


\begin{proof}
Apply again  Lemma \ref{CYCLE} and Theorem \ref{CATFOR}
separately on each connected component of $G$.
\end{proof}


\subsection{} In section 2.4, the behaviour of $\scat$ under barycentric
subdivisions was studied. Actually, it was proved that $\scat(\sd
K) \leq \scat K$. In the one-dimensional case there are examples
where this inequality is strict (see Example \ref{SUBDIVGRAPH}).
Also there are examples of $2$-dimensional complexes where $\sd^N
K$ remains constant for all $N$ and is strictly greater than the
topological LS-category of the geometric realization $\cat\geo{K}$
(see Example \ref{SUBDIVGRAPH}).

In contrast, in the one-dimensional case it holds the geometric
category of the first barycentric subdivision always reaches the
topological category of the geometric realization. In fact, the
following result states that this is possible for a certain kind
of ``local'' barycentric subdivision.

Remember that any finite CW complex $X$ satifies $\cat X\leq\dim
X$, therefore $\cat \geo{K}= 1$ for a non-contractible graph.

\begin{proposition}
Let $G$ be a connected graph. If $G$ is a tree then $\scat G =\cat
\geo{G}=0$. Otherwise,   let $G^\prime$ be the subdivision
obtained from $G$ by only bisecting those edges out of a spanning
tree in $G$. Then $\scat G^\prime=\cat\geo{G}=1$.
\end{proposition}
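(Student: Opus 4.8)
The plan is to handle the two cases separately. The first case is immediate: if $G$ is a tree then it is strongly collapsible (a finite tree collapses to any of its vertices by repeatedly deleting leaves, each an elementary strong collapse), so $\scat G = 0$; and $\geo{G}$ is contractible, so $\cat\geo{G} = 0$. This needs no real work beyond citing that trees are strongly collapsible, which was already used in the proof of Theorem~\ref{ARBCAT}.

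For the second case, fix a spanning tree $T$ of $G$ and let $e_1,\dots,e_r$ be the edges of $G$ not in $T$; since $G$ is not a tree, $r \geq 1$. The subdivision $G'$ is obtained by bisecting each $e_i$: replace $e_i = v_iw_i$ by a new midpoint vertex $m_i$ together with the two edges $v_im_i$ and $m_iw_i$. The topological side is settled by the opening remark: $\geo{G'} \cong \geo{G}$ is a non-contractible graph, a $1$-dimensional CW complex, so $\cat\geo{G'} = \cat\geo{G} = 1$. It remains to prove $\scat G' = 1$. For the upper bound $\scat G' \leq 1$, I would exhibit a categorical cover of $G'$ by two forests, invoking Theorem~\ref{CATFOR} (for graphs, categorical subcomplex $=$ forest) together with Lemma~\ref{Arbol} or Corollary~\ref{ABORCAT} to pass freely between forests and trees. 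The natural choice is $L_0 = T$ together with all the vertices $m_i$ as isolated vertices (a spanning forest of $G'$, indeed extend to a spanning tree by reattaching one half-edge $v_im_i$ of each bisected edge), and $L_1 = \bigcup_i \{v_im_i, m_iw_i\}$ — but wait, $L_1$ must be acyclic: the path $v_i - m_i - w_i$ together with the $T$-path from $w_i$ to $v_i$ would close a cycle only if those $T$-edges are in $L_1$, which they are not, so each component of $L_1$ is a path of length $2$, hence a forest. So put $L_0 = T \cup \{m_1,\dots,m_r\}$ (as a subcomplex, $T$ plus isolated vertices) and $L_1 = \bigcup_i(v_im_i \cup m_iw_i)$; these cover $G'$ (every edge of $G'$ is either a $T$-edge or one of the half-edges $v_im_i, m_iw_i$, and every vertex of $G'$ is a vertex of $T$ or some $m_i$), and both are forests, hence categorical by Theorem~\ref{CATFOR}. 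Thus $\scat G' \leq 1$.

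For the lower bound $\scat G' \geq 1$, I must show $G'$ is not strongly collapsible, equivalently (by Corollary~\ref{ABORCAT}) that $\Upsilon(G') \geq 2$, equivalently that $G'$ contains a cycle. But $G'$ is a subdivision of $G$ and $G$ contains a cycle (it is not a tree), and subdividing edges of a graph that contains a cycle yields a graph that still contains a cycle — bisecting an edge of a cycle just lengthens that cycle. Hence $G'$ has a cycle, so it is not a forest, so $\scat G' \neq 0$, giving $\scat G' \geq 1$. Combining, $\scat G' = 1 = \cat\geo{G'} = \cat\geo{G}$.

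The main obstacle is essentially bookkeeping rather than a genuine difficulty: one must be careful that the two chosen subcomplexes $L_0, L_1$ genuinely cover \emph{all} vertices and edges of $G'$ (the midpoints $m_i$ live in both $L_0$ as isolated vertices and in $L_1$; the original vertices and $T$-edges live in $L_0$; the half-edges live in $L_1$) and that each is genuinely acyclic — the only potential cycles in $L_1$ would arise from two half-edges sharing both endpoints, which cannot happen since $G$ is simple and the $m_i$ are distinct new vertices. One should also note why the subdivision must bisect the non-tree edges specifically: if one bisected a tree edge instead, the resulting graph could still fail to be expressible as a union of two forests in a way compatible with the given cover — though in fact any subdivision of $G$ still has arboricity-minus-one equal to $1$ by Nash-Williams, so $\scat(\sd^N G) = 1$ regardless; the point of the ``local'' subdivision $G'$ is merely that it is the \emph{smallest} such, showing one bisection per non-tree edge already suffices, paralleling the role of the full $\sd K$ in Corollary~\ref{SD1}.
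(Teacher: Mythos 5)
Your first case, your lower bound ($G'$ contains a cycle, hence is not a forest and $\scat G'\geq 1$), and the topological computation $\cat\geo{G}=1$ are all fine and match the paper. The genuine gap is in the upper bound: your second subcomplex $L_1=\bigcup_i(v_im_i\cup m_iw_i)$, containing \emph{both} halves of every bisected edge, need not be a forest. A cycle in $L_1$ does not require two half-edges sharing both endpoints; it arises whenever the non-tree edges of $G$ themselves contain a cycle, which is perfectly possible. Concretely, take $G=K_4$ with spanning tree $T$ the star at one vertex $a$: the three non-tree edges form the triangle $bcd$, so your $L_1$ is the subdivided triangle, a hexagon, which is not acyclic and hence not categorical by Theorem~\ref{CATFOR}. (Already the weaker claim that each component of $L_1$ is a path of length $2$ fails as soon as two non-tree edges share an endpoint, as in $K_5$ with a star spanning tree.) So the proposed cover $\{L_0,L_1\}$ is not a categorical cover in general, and the inequality $\scat G'\leq 1$ is not established.

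The repair is exactly the paper's construction: distribute the two halves of each bisected edge between the two subcomplexes rather than putting both into one. Set $T_1=T\cup\{v_im_i: i=1,\dots,r\}$ and $T_2=T\cup\{m_iw_i: i=1,\dots,r\}$. Each is a tree, since each midpoint $m_i$ is attached to the tree $T$ by a single edge and so appears as a leaf; both are spanning subcomplexes of $G'$, and together they contain every $T$-edge and both halves of every bisected edge, hence cover $G'$. By Theorem~\ref{CATFOR} they are categorical, giving $\scat G'\leq 1$, and your lower-bound argument finishes the proof. (Your parenthetical that \emph{any} subdivision of $G$ has arboricity $2$ is also not quite right: bisecting only tree edges of $K_6$ leaves a $K_5$ subgraph of arboricity $3$; the statement is correct for the full barycentric subdivisions $\sd^N G$, $N\geq 1$, as in Corollary~\ref{GEOMGRAPH}, but that is a side issue.)
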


\begin{proof}
Let us consider a spanning tree $T$ in $G$. It is well known
\cite[Prop. 1A.2 ]{HATCHER2002} that there is a bijection between
the edges out of a spanning tree and the basic cycles generating
the one-dimensional homology of a graph. Now, after every edge out
of $T$ is barycentrically subdivided,  the subdivision $G^\prime$
of $G$ can be covered by two different (non disjoint) spanning
trees $T_1$ and $T_2$ constructed as follows (see Figure
\ref{SDK5}): $T_1$ is an expansion of $T$ which is obtained by
adding one edge (one of the subdivided ones) on every leaf vertex
of $T$; analogously, $T_2$ is obtained by adding the other edge
(not previously added to construct $T_1$) to  all the leaf
vertices of $T$. By definition both $T_1$ and $T_2$ are spanning
trees covering $G^\prime$ and hence $\scat G^\prime=\cat
\geo{G}=1$.
\end{proof}

\begin{example}
Notice that for certain graphs, as the complete ones $K_n$, the
simplicial LS-category equals the topological LS-category of the
realization of the graph by bisecting a fewer number of edges than
the stated one by the above Proposition. For example, for $K_5$
only three edges must be bisected in order to get $\scat K'_5=1$
(see Figure \ref{TRESEDGES}).

\begin{figure}[h]
\begin{center}
\includegraphics[
height=4.5cm]{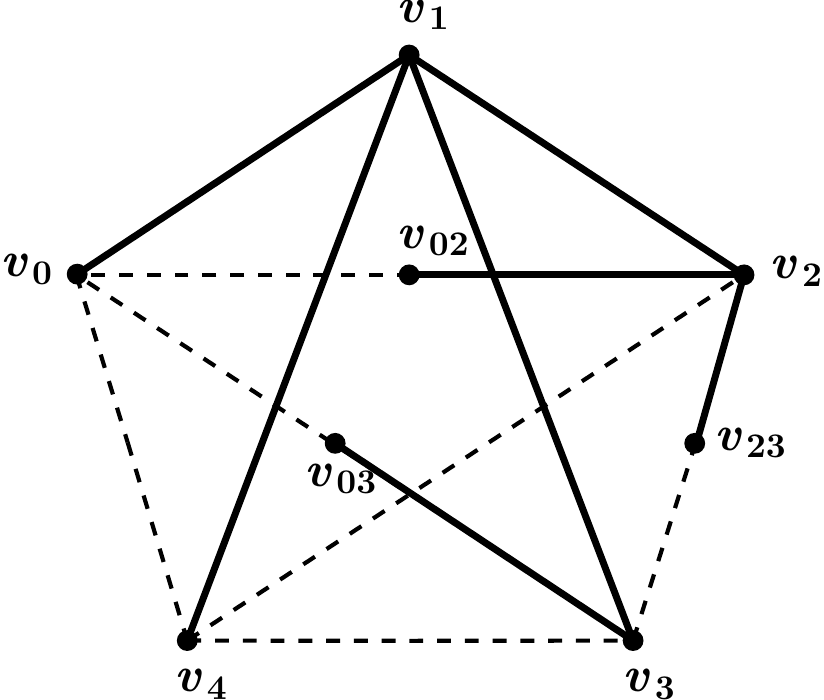}
\end{center}
\caption{Bisection of three  edges to obtain the category of the geometric realization}
\label{TRESEDGES}
\end{figure}

\end{example}

\begin{corollary}\label{GEOMGRAPH}
Let $G$ be a connected graph. Then $\scat(\sd G)=\cat \geo{G}$.
\end{corollary}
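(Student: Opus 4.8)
The plan is to show the two inequalities $\cat\geo{G}\le\scat(\sd G)$ and $\scat(\sd G)\le\cat\geo{G}$ separately, using the material already developed for graphs. Since $\geo{\sd G}$ is homeomorphic to $\geo{G}$, Corollary~\ref{sd} (applied with $N=1$) already gives $\cat\geo{G}=\cat\geo{\sd G}\le\scat(\sd G)$, so only the reverse inequality requires work, and by the same homeomorphism it suffices to produce a categorical covering of $\sd G$ with $\cat\geo{G}+1$ elements.

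First I would dispose of the trivial case: if $G$ is a tree, then $\sd G$ is again a tree, hence strongly collapsible, so $\scat(\sd G)=0=\cat\geo{G}$ by the Proposition above (and the fact that a contractible finite CW complex has category $0$). So assume $G$ is not a tree, whence $\cat\geo{G}=1$ because a graph is a $1$-dimensional CW complex and $\cat X\le\dim X$. I then need to exhibit a covering of $\sd G$ by two categorical (equivalently, by Theorem~\ref{CATFOR}, two acyclic) subgraphs. The point is that $\sd G$ subdivides \emph{every} edge of $G$, which is more than enough: fix a spanning tree $T\subset G$; then in $\sd G$ every edge of $G$ has been bisected, so in particular every edge outside $T$ has an interior barycenter. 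Build $T_1$ from (the subdivision of) $T$ together with, for each edge $e\notin T$, the half of $\sd e$ incident to one chosen endpoint, and build $T_2$ from the remaining halves of the edges $e\notin T$. Each chosen half is a pendant edge attached at a barycenter, so adding them to a tree creates no cycle; hence $T_1$ and $T_2$ are subtrees of $\sd G$, they are strongly collapsible, and they cover all vertices and all edges of $\sd G$. (This is exactly the construction in the Proposition above, which yields $\scat G'=1$ for the partial subdivision $G'$; since $\sd G$ refines $G'$, the same two-tree cover works verbatim for $\sd G$.) Therefore $\scat(\sd G)\le 1$, and combined with $\cat\geo{G}=1\le\scat(\sd G)$ we get equality.

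Finally, for a graph $G$ with several connected components one argues componentwise: $\sd G$ is the disjoint union of the $\sd G_i$, a categorical covering of a disjoint union is obtained by taking unions of categorical subcomplexes of the pieces, and $\cat$ of a disjoint union is the maximum of the categories of the pieces (a contractible component contributes $0$). So the equality on each component gives the equality for $G$.

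The only mild obstacle is bookkeeping: one must check that the two ``half-edge'' families genuinely form subgraphs of $\sd G$ covering all its vertices (including the original vertices of $G$, which are covered already by the subdivided spanning tree, and the new barycenters, each of which lies on some chosen half) and that adjoining pendant edges to a tree keeps it acyclic --- both are immediate. No new machinery beyond Theorem~\ref{CATFOR}, Corollary~\ref{sd}, and the Proposition above is needed.
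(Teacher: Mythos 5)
Your argument is correct and follows essentially the same route as the paper: the lower bound $\cat\geo{G}\le\scat(\sd G)$ comes from Corollary~\ref{sd}, and the upper bound comes from covering $\sd G$ by two acyclic subgraphs built from a spanning tree exactly as in the preceding Proposition (whose construction applies a fortiori when \emph{every} edge is bisected). Two minor points: your $T_2$ is a star forest rather than a tree, which is all you need by Theorem~\ref{CATFOR} (or it can be enlarged to a tree via Lemma~\ref{Arbol}), and the closing remark on disconnected graphs is superfluous for the stated corollary and its ``maximum'' formula is not accurate, since a categorical set must lie in a single component.
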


\begin{remark}
Taking into account the above results, it is interesting to point
out that, in the one-dimensional case, the difference $\scat G -
\scat \sd G = \scat G - \cat \geo{G} $ can be arbitrarily large,
as   can be checked by considering the complete graph $K_n$.
\end{remark}

\section*{Acknowledgements}
We thank J. Barmak for his valuable suggestions and M.J. Pereira-S\'aez for many useful discussions.

\bigskip

\address{
\noindent {\sc D.~Fern\'andez-Ternero}.
\\Dpto. de Geometr\'{\i}a y Topolog\'{\i}a, Universidad de Sevilla, Spain.\\}
\email{desamfer@us.es}

\medskip

\address{
\noindent {\sc E.~Mac\'ias-Virg\'os}.
\\{Dpto. de Matem\'aticas,} Universidade de San\-tia\-go de Compostela, Spain.\\}
\email{quique.macias@usc.es}

\medskip

\address{
\noindent {\sc E.~Minuz}.
\\Department of Mathematical Science, University of Copenhagen, Denmark.\\}
\email{qrz563@alumni.ku.dk}

\medskip

\address{
\noindent {\sc J.A.~Vilches}.
\\Dpto. de Geometr\'{\i}a y Topolog\'{\i}a, Universidad de Sevilla, Spain.\\}
\email{vilches@us.es}

\end{document}